\DeclareMathOperator{\Res}{Res}
\def\m{\medskip}
\newcommand{\eps}{\varepsilon}
\newcommand{\R}{\mathbb R}
\newcommand{\N}{\mathbb N}
\newcommand{\C}{\mathbb C}
\DeclareMathOperator{\Imm}{Im}
\DeclareMathOperator{\Rre}{Re}
\newcommand{\p}{\partial}
\newcommand{\z}{\bar z}
\newcommand{\dbarb}{\bar\partial_b}
\newcommand{\dbarbs}{\bar\partial^*_b}
\newcommand{\Boxb}{\Box_b}
\newcommand{\atopp}[2]{\genfrac{}{}{0pt}{2}{#1}{#2}}
\newcommand{\nn}{\nonumber}
\newcommand{\ep}{\epsilon}
\newcommand{\I}{\mathcal{I}}
\newcommand{\la}{\langle}
\newcommand{\ra}{\rangle}
\newtheorem{thm}{Theorem}[section]
\newtheorem{prop}[thm]{Proposition}
\newtheorem{lem}[thm]{Lemma}
\newtheorem{cor}[thm]{Corollary}
\newtheorem*{theorem*}{Theorem}
\theoremstyle{definition}
\newtheorem{defn}[thm]{Definition}
\newtheorem{example}[thm]{Example}
\theoremstyle{remark}
\newtheorem{rem}[thm]{Remark}
\newcommand{\hatq}{\hat q}
\begin{document}

\title{The Fundamental Solution to $\Box_b$ on Quadric Manifolds -- Part 4. Nonzero Eigenvalues}

\author{Albert Boggess and Andrew Raich}%
\address{
School of Mathematical and Statistical Sciences\\
Arizona State University\\
Physical Sciences Building A-Wing Rm. 216\\
901 S. Palm Walk\\
Tempe, AZ 85287-1804 }
\address{
Department of Mathematical Sciences \\ 1 University of Arkansas \\ SCEN 327 \\ Fayetteville, AR 72701}

\keywords{quadric submanifolds, higher codimension, nonzero eigenvalues, complex Green operator, hypoellipticity, $L^p$ regularity}
\email{boggess@asu.edu, araich@uark.edu}

\thanks{This work was supported by a grant from the Simons Foundation (707123, ASR)}%
\subjclass[2010]{32W10, 35R03, 32V20, 42B37, 43A80}

\begin{abstract} This paper is the fourth of a multi-part series in which we study the geometric and analytic properties of the Kohn Laplacian 
and its inverse on general quadric submanifolds of $\mathbb{C}^n\times\mathbb{C}^m$. 
The goal of this article is explore the complex Green operator in the case that the eigenvalues of the directional
Levi forms are nonvanishing. We 1) investigate the geometric conditions on $M$ which the eigenvalue condition forces, 2) establish optimal pointwise
upper bounds on complex Green operator and its derivatives,
3) explore the $L^p$ and $L^p$-Sobolev mapping properties of the associated kernels, and
4) provide examples.
\end{abstract}

\maketitle

%
%
\section{Introduction}\label{sec:intro}
In this paper, we investigate {the} complex Green operator $N$ on quadric submanifolds $M \subset \C^n\times\C^m$ for which all the eigenvalues of the directional
Levi forms are nonzero. The complex Green operator is the (relative) inverse to the Kohn Laplacian $\Boxb$. By definition, a {\em quadric submanifold} is defined as
\begin{equation}
\label{eqn:M defn}
M = \{(z,w)\in\C^{n}\times\C^m : \Imm w = \phi(z,z)\}
\end{equation}
where $\phi:\C^{n}\times\C^{n} \to \C^m$ is a sesquilinear vector-valued quadratic form. The {\em Levi form in the direction of $\nu \in S^{m-1}$}, the unit sphere in $\R^m$, is defined
as $\phi_\nu (z,z) =\phi(z,z) \cdot \nu$. The {\em Kohn Laplacian} is defined as $\Boxb = \dbarb\dbarbs+\dbarbs\dbarb$ where 
$\dbarb$ is the usual tangential Cauchy-Riemann operator and $\dbarbs$ is its $L^2$ adjoint. The (relative) inverse to $\Boxb$ on $(p,q)$-forms, 
when it exists, is called the complex Green operator and denoted by $N_{p,q}$.
The existence of the complex Green operator produces the $L^2$-minimizing solution operator to the $\dbarb$-equation, 
$\dbarbs N_{p,q}$, in a canonical fashion. For background on the
$\dbarb$ and $\Boxb$-operators, please see \cite{Bog91,ChSh01, BiSt17}. 

In this paper, our main interest is the class of quadrics with  codimension $m\geq 2$
where the matrix associated to the scalar Levi form, $\phi_\nu (z,z)$ has only nonzero eigenvalues
for each $\nu\in S^{m-1}$.
We show, that the nonvanishing eigenvalue condition forces $n$ to be even (so replace $n$ with $2n$) with exactly half of the eigenvalues to be positive and half negative.
For $0 \leq q \leq 2n$, we establish sharp upper bounds on the size of 
$N_{0,q}$ 
and its derivatives in terms of the control geometry on $M$ that are analogous to the classical estimates on $N$ for the Heisenberg group or the finite type hypersurface type case (that is, $m=1$) in $\C^2$ \cite{NaRoStWa89,Chr91a,Chr91b, FeKo88a}. 
This allows us to invoke the theory of homogeneous groups to prove
$L^p$ and $L^p$-Sobolev mapping properties for appropriate derivatives of $N$.
When $q=n$,
$\Boxb$ is not solvable by \cite{PeRi03}, but we can still estimate the canonical relative fundamental solution for $\Boxb$ given by
$\int_0^\infty e^{-s\Boxb}(I-S_n)\, ds$ where $S_n$ is the orthogonal projection onto $\ker\Boxb$.
 We also provide several
examples, illustrating our estimates.

 More generally, when the eigenvalues are not bounded away from zero, the control distance fails to govern estimates on $N_{0,q}$.
This failure is apparent in some general hypersurface type CR manifolds as well as some simple higher codimension quadrics
\cite{Mac88,NaSt06,BoRa21III}. In higher codimension, the correct geometry is far from understood as the singularities of $N$ occur both on and off of the diagonal.

For a bit more background and history,
the tangential Cauchy-Riemann operator, or $\dbarb$, and the associated Kohn Laplacian $\Box_b$ are arguably the most important operators in several complex variables because they are intrinsically intertwined with the
complex geometry, topology, and analysis of CR manifolds.
Solving the $\Boxb$-equation is often a product of hard analysis and sophisticated functional analysis, and the solution produced by these techniques may have
excellent function theoretic properties but is not constructive (e.g., \cite{Sha85,Koh86,HaRa11,HaRa15,CoRa20}). Often, this approach is not (yet) sufficient to 
produce the estimates we seek on the solution in the higher codimension setting. Hence we restrict to the class of 
quadrics, which have a Lie group structure which helps provide a more explicit 
formula for the solution that is suitable to estimate.

In our opinion, one of the most beautiful results is the computation of $N_{p,q}$
on the Heisenberg group by Folland and Stein \cite{FoSt74p}. The problem, though, is that their technique does not easily generalize, especially to higher codimension. Consequently,
one of main approaches to the $\Boxb$-problem on these manifolds is through the $\Boxb$-heat equation. The first results in this direction
were for the sub-Laplacian on the Heisenberg group by Hulanicki \cite{Hul76} and Gaveau \cite{Gav77}. More results followed for
$\Boxb$ on quadrics of increasing generality \cite{BoRa09, YaZh08,CaChTi06,BeGaGr96,BeGaGr00, Eld09} culminating (so far) with
our paper \cite{BoRa11} where we compute the $\Boxb$-heat kernel on a general quadric. 
Virtually all of these results rely on the fact that we can identify $M$ with its tangent space
at the orgin, $\C^n\times\R^m$, and push the problem forward onto $\C^n\times\R^m$. 
The problem with these papers (ours included) is that if we put coordinates $(z,t)$ on $M$, the solution is only given up to a 
partial Fourier transform in $t$. Given that \cite{FoSt74p} is the gold standard (for us), we are taking the formula from \cite{BoRa11} and trying
to undo the Fourier transform and integrate out the time variable. This allows us to recover to both the projection onto $\ker\Boxb$ as well as $N_{0,q}$. 
In the earlier parts of the series, \cite{BoRa13q,BoRa21I,BoRa20II,BoRa21III}, we started with the formula for $\Boxb$-heat kernel and generated an
integral formula for both diagonal part of the complex Green operator as well as the projection onto $\ker\Boxb$. 
We also categorized the class of
quadrics of codimension 2 in $\C^4$ into three $\Box_b$-invariant groups and computed $0$th order asymptotics for the kernels for each of these groups. We noticed that in one case, 
where the directional Leviform has nonvanishing eigenvalues,
the complex Green operator was both
solvable and hypoelliptic. Additionally, the estimates were particularly good, allowing us to prove continuity results in $L^p$-Sobolev spaces, $1 < p < \infty$. In many respects, the current paper is 
a generalization of this case. 

In addition to our series of papers, Mendoza proves the following:
Let $M$ be a CR manifold of CR codimension $> 1$ whose Levi form is everywhere nondegenerate. 
Then $\Boxb$ computed with respect to any Hermitian metric
is hypoelliptic in all degrees except those corresponding to the number of positive or negative eigenvalues of the Levi form \cite{Men22}.
Additionally, in the special case that $\phi(z,z)$ is a sum of squares, Nagel, Ricci, and Stein \cite{NaRiSt01} proved pointwise upper bounds on 
both the complex Green operator and
the projections onto $\ker\Boxb$, and they established the $L^p$ theory in addition.

The outline of the paper is as follows.
In the next section, we state our main results, primarily Theorem \ref{thm:pointwise bounds}. We continue in Section \ref{sec:notation} where we define our notation and explore the 
geometric consequences of our hypotheses.  
The proof of Theorem \ref{thm:pointwise bounds} for $q \neq n$ is spread over Sections \ref{sec:review/formula for N} - \ref{sec:z large}.
In Section \ref{sec:q=n}, we discuss the adjustments to adapt the argument for the $q=n$ case.
We conclude the paper with several new examples in Section \ref{sec:examples}.

\section{Main Results}\label{sec:Main Results} 
Define the projection $\pi:\C^n\times\C^m \to \C^n\times\R^m$ by $\pi(z,t+is)=(z,t)$. For each quadric $M \subset\C^n\times\C^m$, the projection 
$\pi$ induces a CR structure and Lie group structure on $\C^n\times\R^m$, and we call this Lie group $G$ (or $G_M$). The projection is therefore
a CR isomorphism and we use the same notation for objects on $M$ and their pushfowards/pullbacks on $G$.

We introduce only the notation necessary to state the main results. 
Define the \emph{norm function} $\rho:\C^{2n}\times\R^m \to [0,\infty)$ by 
\[
\rho(z,t) = \max\{|z|,|t|^{1/2}\}  \approx |z|+|t|^{1/2}.
\]
For a multiindex $I = (I^1,I^2) \in \N_0^{4n+m}$, the multiindex $I^1 \in \N_0^{4n}$ records the differentiation in the
$z$ and $\z$-variables, and $I^2 \in \N_0^m$ records the $t$-derivatives. Given such a multiindex $I$, define the \emph{weighted order} of $I$ by
$\la I \ra = |I^1|+2|I^2|$ and the \emph{order} of $I$ by $|I| = |I_1|+|I_2|$. 
\begin{thm}\label{thm:pointwise bounds}
Let $M\subset\C^{2n}\times\C^m$ be a quadric submanifold defined by \eqref{eqn:M defn} with associated projection $G$,
and assume that eigenvalues of the directional Levi forms are nonzero.  
Let $0 \leq q \leq 2n$ and $N=N_{0,q}$.
For any multiindex $I\in\N_0^{4n+m}$, there exists a constant $C_I>0$ so that
\[
|D^I N(z,t)| \leq \frac{C_I}{\rho(z,t)^{2(2n+m-1)+\la I\ra}}.
\]
\end{thm}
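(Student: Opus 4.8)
The plan is to start from the integral formula for $N_{0,q}$ developed in the earlier parts of the series (cited here as \cite{BoRa13q,BoRa21I,BoRa20II,BoRa21III} and recalled in Section \ref{sec:review/formula for N}), which expresses $N$ as an integral over the dual variable $\lambda$ (the partial Fourier transform variable dual to $t$) of an explicit kernel built from the $\Boxb$-heat kernel of \cite{BoRa11}. Under the nonvanishing-eigenvalue hypothesis, the geometric analysis of Section \ref{sec:notation} forces $n$ to be even and pins down the signature of each $\phi_\nu$; the key quantitative payoff is that the eigenvalues $\mu_j^\lambda$ of the relevant directional Levi form, as functions of $\lambda \in \R^m$, satisfy $|\mu_j^\lambda| \approx |\lambda|$ uniformly in the angular direction $\lambda/|\lambda|$. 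This uniform comparability is what replaces the delicate degenerate analysis of the general case and makes the control distance $\rho$ the right gauge.

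The main estimate is then proved by a scaling/dyadic decomposition argument. First I would record the homogeneity: the formula for $N$ is (anisotropically) homogeneous of degree $-2(2n+m-1)$ with respect to the dilations $(z,t)\mapsto(\delta z,\delta^2 t)$, and each application of $D^I$ lowers the degree by $\langle I\rangle$; hence it suffices to prove the bound on the unit sphere $\rho(z,t)=1$, or equivalently to prove $|D^I N(z,t)|\lesssim 1$ for $\rho\approx 1$ together with the scaling relation. Second, I would split the $\lambda$-integral into the region $|\lambda|\lesssim 1/\rho^2$ and the region $|\lambda|\gtrsim 1/\rho^2$. On the low-frequency piece one uses that the integrand is bounded (uniformly, using $|\mu_j^\lambda|\approx|\lambda|$ to control the various Laguerre-type factors and the Gaussian weights from the heat kernel) and integrates over a ball of radius $\sim \rho^{-2}$ in $\R^m$, picking up $\rho^{-2m}$; the polynomial factors from $z$ and from $D^I$ are absorbed using $|\lambda|^{1/2}|z|\lesssim 1$ and $|\lambda||t|\lesssim 1$ on this piece. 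On the high-frequency piece one exploits exponential decay in $|\lambda|\rho^2$ coming from the Gaussian/heat factors — this is where the uniform lower bound $|\mu_j^\lambda|\gtrsim|\lambda|$ is essential, since it guarantees genuine exponential (not merely polynomial) decay in all $m$ directions of $\lambda$ simultaneously — so the $\lambda$-integral converges and is dominated by its behavior at $|\lambda|\sim\rho^{-2}$, again yielding $\rho^{-2m}$ after accounting for the extra powers of $|\lambda|$ from $z$-monomials and from $D^I$ (each $t$-derivative brings down a factor $|\lambda|$, each $z$- or $\bar z$-derivative a factor $|\lambda|^{1/2}$, consistent with the weighted order $\langle I\rangle$). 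Combining the two regions and reinstating the $z$-dependent factors gives the claimed $\rho^{-2(2n+m-1)-\langle I\rangle}$.

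A separate point requiring care is the interface between the regime where $|z|$ dominates $\rho$ and where $|t|^{1/2}$ dominates — this is exactly the division of labor signaled by the section titles (``$z$ large'' versus the complementary small-$z$ analysis). When $|z|\gtrsim|t|^{1/2}$ one integrates out $\lambda$ first using the Gaussian decay $e^{-c|\lambda||z|^2}$ and then has no remaining singularity in $t$; when $|t|^{1/2}\gtrsim|z|$ one must instead extract decay in $t$ directly, typically by integrating by parts in $\lambda$ (each integration by parts trading a power of $|t|$ for a derivative of the kernel in $\lambda$, which is controlled because the eigenvalue functions are smooth and comparable to $|\lambda|$ away from $\lambda=0$) and then controlling the residual integral near $\lambda=0$ by the low-frequency estimate above.

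I expect the main obstacle to be the uniformity of all constants across the angular variable $\nu=\lambda/|\lambda|\in S^{m-1}$: the heat kernel formula involves the eigenvalues $\mu_j^\lambda$, their ``half-integer'' shifts, and associated Laguerre polynomials, and for $q\neq n$ one must show that the relevant spectral projections (in particular the quantities that would vanish and cause trouble precisely at $q=n$) stay bounded away from the bad set uniformly in $\nu$. The nonvanishing-eigenvalue hypothesis, together with compactness of $S^{m-1}$, should deliver this uniformity, but verifying that the bound survives differentiation $D^I$ — i.e. that differentiating the kernel in $z$, $\bar z$, $t$ does not destroy the uniform comparabilities — is the technical heart of the argument and is presumably what occupies Sections \ref{sec:review/formula for N}--\ref{sec:z large}. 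The $q=n$ case, handled in Section \ref{sec:q=n}, then requires replacing $N$ by $\int_0^\infty e^{-s\Boxb}(I-S_n)\,ds$ and checking that subtracting the projection onto $\ker\Boxb$ removes exactly the obstruction to convergence while preserving the same homogeneity and hence the same pointwise bound.
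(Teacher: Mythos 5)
There is a genuine gap, and it sits exactly where you locate the ``technical heart'': your argument repeatedly assumes that the integrand in the $\lambda$-representation is smooth in $\lambda$ (``the eigenvalue functions are smooth and comparable to $|\lambda|$ away from $\lambda=0$''), so that you can integrate by parts in $\lambda$ to extract decay in $t$ in the regime $|t|^{1/2}\gtrsim|z|$. The comparability $|\mu_j^\lambda|\approx|\lambda|$ is indeed correct (homogeneity plus compactness of $S^{m-1}$), but smoothness is false in general: the eigenvalues of $A_\nu$ are only continuous in $\nu$ (Example 12.5 in the paper produces eigenvalues containing $|\nu_4|$), and the eigenvectors, hence the adapted coordinates $Z(\nu,z)$ and the Laguerre-type factors built from them, are only locally integrable in $\nu$. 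So one cannot differentiate or integrate by parts in $\lambda$ on the kernel as written. The paper's way around this is to prove that the particular \emph{combinations} appearing in the kernel --- $B(r,\nu)$, $\det([r^{-\bar A_\nu}]_{K,J})$, $A(r,\nu,\hat q)=\hat q^*\Lambda(A_\nu)\hat q$, $\sqrt{A_\nu^2}$, and the products of minors of $U(\nu)$ in the $q=n$ case --- are real analytic in $\nu$, via residue formulas for symmetric functions of the eigenvalues, coefficients of the characteristic polynomial, and matrix functions of $A_\nu$ (Lemmas \ref{lem:real analyticity of B}, \ref{lem:real analyticity of B_L}, \ref{lem:square root}, \ref{analytic-lemma}); only after that can any $\nu_1$-integration by parts or contour deformation be justified. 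Your proposal contains no substitute for this step, and without it the integration-by-parts half of your argument does not get off the ground.

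A second, related problem is that in the $|t|$-dominant regime your low-frequency piece is handled by pure size estimates (``the integrand is bounded\ldots integrate over a ball of radius $\sim\rho^{-2}$''), but size estimates are not enough there: after rescaling to $\hat q=z/|t|^{1/2}$ the dangerous contribution comes from $r$ near $1$ (equivalently $s\to\infty$ after $s=\frac{1+r}{1-r}$), where the leading term behaves like $s^{2n-2}\nu^{\,\ell-e}(s|\hat q|^2-i\nu_1)^{-(2n+m-1)}$; at $\hat q=0$ the $s$-integral of its absolute value diverges, and boundedness of the rescaled kernel holds only because the sphere integrals vanish at $\hat q=0$ by a parity-and-contour argument (the evenness of the exponent $e$ and Lemma \ref{lem:basicA-estimate}). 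That genuine cancellation, uniform in $\nu$, is what the expansion machinery of Sections \ref{sec:begin upper half}--\ref{sec:sphere-integral} is built to expose, and it is absent from your sketch. Your scaling reduction and the two-regime split $|z|^2\gtrless|t|$ do match the paper's organization, and the $q=n$ modification you describe is the right idea in outline, but as it stands the proposal would not close without the analyticity lemmas and the cancellation argument above.
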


\begin{rem} 
\begin{enumerate}[1.]
\item The homogeneous dimension of $M$ is $2(2n+m)$, and we are inverting an order two operator (with respect to $\rho$). This explains the power
of $\rho$ in the denominator of Theorem \ref{thm:pointwise bounds}.
\item The case $q=n$ is special because $\ker\Boxb \neq 0$. The relative fundamental solution that we estimate is 
$\int_0^\infty e^{-s\Boxb}(I-S_n)\, ds$ where $S_n:L^2_{0,n}(M)\to \ker\Boxb \cap L^2_{0,n}(M)$ is the orthogonal projection.
\end{enumerate}
\end{rem}

Let $W^{k,p}(M)$ denote the Sobolev space of forms on $M$ with $z$, $\bar{z}$ and $t$ derivatives of order $k$ are in $L^p(M)$.

\begin{thm}\label{thm:L^p}
Let $M\subset\C^{2n}\times\C^m$ be a quadric submanifold defined by \eqref{eqn:M defn} with associated projection $G$,
and assume that eigenvalues of the directional Levi forms are nonzero.  
Let $0 \leq q \leq 2n$ and $N=N_{0,q}$.
Given  
a multiindex $I\in\N_0^{4n+m}$ so that $\la I \ra =2$,
the operator $D^I N_{0,q}$ is exactly regular on $W^{k,p}(M)$ for all $k\geq 0$ and all $1 < p < \infty$. In other words,
$D^I N_{0,q}$ extends to a bounded operator on $W^{k,p}(M)$. In particular, $D^I N_{0,q}$ is a hypoelliptic operator.
\end{thm}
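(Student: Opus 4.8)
The plan is to pass from $M$ to the group $G$, to recognize $D^I N_{0,q}$ --- for $\langle I\rangle = 2$ --- as a Calder\'on--Zygmund convolution operator adapted to the anisotropic dilation structure of $G$, to invoke the $L^p$ theory of such operators on homogeneous groups, and then to upgrade from $L^p$ to $W^{k,p}$ using the convolution/invariance structure.

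First I would recall that $\Boxb$ commutes with the left translations of $G$, so for $q\neq n$ the relative inverse $N_{0,q}$ acts by group convolution with a distribution kernel $n$ on $G$, and likewise the $q=n$ relative fundamental solution $\int_0^\infty e^{-s\Boxb}(I - S_n)\,ds$ of Section~\ref{sec:q=n}. The dilations $\delta_r(z,t) = (rz, r^2 t)$ make $G$ homogeneous of homogeneous dimension $Q := 2(2n+m)$, $\rho$ is a $\delta_r$-homogeneous norm, and $\Boxb$ is homogeneous of degree $2$, so $n$ is homogeneous of degree $2 - Q$ (and the $q=n$ kernel carries the same homogeneity). Theorem~\ref{thm:pointwise bounds} applied to the multiindex $I+J$ then gives, for every $J$,
\[
|D^J(D^I n)(z,t)| \leq \frac{C_{I,J}}{\rho(z,t)^{Q + \langle J\rangle}},
\]
so $D^I n$ is smooth on $G\setminus\{0\}$ and satisfies the differential inequalities of a kernel homogeneous of the critical degree $-Q$; modulo harmless lower-order corrections coming from the passage between coordinate and left-invariant derivatives on $G$, $D^I N_{0,q}$ is a convolution operator with this kernel.

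The remaining ingredient needed to conclude that $D^I n$ is a genuine Calder\'on--Zygmund kernel is the cancellation condition
\[
\sup_{0 < a < b}\Bigl|\int_{a < \rho(z,t) < b} D^I n(z,t)\, dV(z,t)\Bigr| < \infty,
\]
equivalently the vanishing of the mean of $D^I n$ over $\{\rho = 1\}$. This is not contained in the size bounds of Theorem~\ref{thm:pointwise bounds}, and I would obtain it in one of two ways: (i) by integrating by parts, using that $D^I n$ is an exact derivative of weighted order $\langle I\rangle = 2$ of the locally integrable, degree-$(2-Q)$ kernel $n$, so that the annular integral reduces to boundary terms over $\{\rho = a\}$ and $\{\rho = b\}$ controlled by the decay of $n$ and its first derivatives; or (ii) by invoking maximal hypoellipticity of $\Boxb$ on $(0,q)$-forms in the nondegenerate range $q\neq n$ --- the a priori estimate $\|D^I u\|_{L^2} \lesssim \|\Boxb u\|_{L^2} + \|u\|_{L^2}$ for $\langle I\rangle \leq 2$ --- which, applied to $u = N_{0,q}f$ (and its $q=n$ analogue with $I - S_n$), is precisely the $L^2$-boundedness of $D^I N_{0,q}$. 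With the differential inequalities and $L^2$-boundedness in hand, the classical theory of singular integrals on homogeneous groups yields that $D^I N_{0,q}$ is bounded on $L^p(G)\cong L^p(M)$ for all $1 < p < \infty$.

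Finally, a Calder\'on--Zygmund convolution operator on a homogeneous group commutes with the invariant differentiations and is therefore bounded on every associated Sobolev space $W^{k,p}$, $k\geq 0$; translating back through the comparability of the coordinate-derivative and invariant-derivative Sobolev norms gives $\|D^I N_{0,q} f\|_{W^{k,p}(M)} \lesssim \|f\|_{W^{k,p}(M)}$ for all $k\geq 0$ and $1 < p < \infty$ --- that is, exact regularity with no loss of derivatives --- and hypoellipticity follows. I expect the cancellation/$L^2$-boundedness step to be the main obstacle: Theorem~\ref{thm:pointwise bounds} supplies exactly the right pointwise sizes for the kernel and its derivatives but not the cancellation, so one must bring in either the explicit homogeneity of $n$ coming from the formula that underlies the proof of Theorem~\ref{thm:pointwise bounds} or the maximal hypoellipticity of $\Boxb$ in the nondegenerate degrees.
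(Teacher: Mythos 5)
Your proposal is correct and follows essentially the same route as the paper: identify $M$ with the homogeneous group $\C^{2n}\times\R^m$ with norm $\rho$, use the kernel bounds of Theorem \ref{thm:pointwise bounds} to treat $D^I N_{0,q}$ (for $\la I\ra = 2$) as a convolution operator of critical homogeneity, apply singular-integral theory on homogeneous groups for $L^p$, and use the convolution structure to pass to $W^{k,p}$. The only difference is in how the cancellation input is supplied: the paper gets it via uniform boundedness on normalized bump functions (temperedness plus the dilation and convolution structure, following the approach of \cite{BoRa20II}), whereas you propose verifying it by integration by parts/homogeneity of the kernel or by an a priori $L^2$ (maximal) estimate, which is a minor variation within the same framework.
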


\begin{proof} The proof follows easily following the approach of \cite[Section 3]{BoRa20II}. Identifying $M$ with $\C^{2n}\times\R^m$, we can view
$M$ as a homogeneous group with norm function $\rho(z,t)$. From Theorem \ref{thm:pointwise bounds}, it follows that the integration kernel of $D^I N_{0,q}$ and its derivatives
have the appropriate pointwise decay. A second consequence of Theorem \ref{thm:pointwise bounds} is that $D^I N_{0,q}$ is a tempered distribution, and combining
this fact with the natural dilation structure and that $D^I N_{0,q}$ is a convolution operator shows that $D^I N_{0,q}$ is uniformly bounded on normalized bump functions. This is exactly
what is required to establish the $L^p$ boundedness, $1 < p < \infty$. From the fact that $D^I N_{0,q}$ is a convolution operator, boundedness on $W^{k,p}(\C^n\times\R^m)$
follows immediately.
\end{proof}

%
%
\section{Notation and Hypotheses}\label{sec:notation}
Suppose that $M$ is the quadric submanifold
\[
M = \{ (z,w) \in \C^n \times \C^m: \Imm w =  \phi (z,z) \}.
\]
Recall that for $\nu \in S^{m-1}$, $\phi_\nu (z,z) = \phi(z,z) \cdot \nu = z^* A_\nu z$ where $A_\nu $ is a 
Hermitian symmetric matrix.

\begin{prop}
If $m \geq 2$ and if the eigenvalues of $A_\nu$ are all nonzero for each $\nu \in S^{m-1}$, then $n$ must be even. Furthermore for each $\nu \in S^{m-1}$, half of the eigenvalues of $A_\nu$ are positive and half of the eigenvalues are negative,  counting multiplicity.
\end{prop}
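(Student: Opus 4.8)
The plan is to use a connectedness/continuity argument on the sphere $S^{m-1}$, exploiting the fact that $m \geq 2$ forces $S^{m-1}$ to be connected. For each $\nu \in S^{m-1}$, let $p(\nu)$ and $q(\nu)$ denote the number of positive and negative eigenvalues of $A_\nu$, counted with multiplicity. By hypothesis every eigenvalue of $A_\nu$ is nonzero, so $p(\nu) + q(\nu) = n$ for all $\nu$. First I would observe that since the eigenvalues of $A_\nu$ never vanish, the smallest absolute value $\min_j |\lambda_j(A_\nu)|$ is a continuous, strictly positive function on the compact set $S^{m-1}$, hence bounded below by some $\delta > 0$. This uniform spectral gap prevents eigenvalues from crossing zero as $\nu$ varies, so $p(\nu)$ and $q(\nu)$ are locally constant: if $\nu' $ is close enough to $\nu$ that $\|A_{\nu'} - A_\nu\| < \delta$, then by standard eigenvalue perturbation (Weyl's inequalities) no eigenvalue can change sign, so $p(\nu') = p(\nu)$. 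A locally constant function on the connected space $S^{m-1}$ is constant, so $p(\nu) \equiv p$ and $q(\nu) \equiv q$ with $p + q = n$.

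Next I would exploit the antipodal symmetry. Since $A_{-\nu} = -A_\nu$, the positive eigenvalues of $A_\nu$ become the negative eigenvalues of $A_{-\nu}$ and vice versa, so $p(-\nu) = q(\nu)$. But $-\nu \in S^{m-1}$ as well (here again $m \geq 2$ is not even needed, only $m \geq 1$, though connectedness above needs $m \geq 2$), and we just showed $p$ is constant, so $p = p(-\nu) = q(\nu) = q$. Therefore $p = q$, and from $p + q = n$ we conclude $n = 2p$ is even, with exactly half the eigenvalues of each $A_\nu$ positive and half negative.

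The main point to get right is the justification that $p(\nu)$ is locally constant; everything else is a short symmetry observation. I expect no serious obstacle here — the uniform lower bound $\delta$ on $\min_j|\lambda_j(A_\nu)|$ comes from continuity of eigenvalues (or of $\nu \mapsto A_\nu$, which is linear, composed with the continuous map sending a Hermitian matrix to its sorted spectrum) together with compactness of $S^{m-1}$, and Weyl's perturbation inequality $|\lambda_j(A_{\nu'}) - \lambda_j(A_\nu)| \leq \|A_{\nu'} - A_\nu\|$ then does the rest. One should note explicitly that the case $m = 1$ genuinely fails this conclusion (e.g. $A_\nu$ positive definite for the single relevant direction up to sign), which is why the hypothesis $m \geq 2$, equivalently the connectedness of $S^{m-1}$, is essential.
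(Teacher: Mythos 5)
Your proof is correct, and it reaches the conclusion by a somewhat different route than the paper. The shared skeleton is the same: both arguments rest on the connectedness of $S^{m-1}$ for $m\geq 2$ together with the antipodal symmetry $A_{-\nu}=-A_\nu$. The difference lies in how the constancy of the positive-eigenvalue count is justified, and in how the evenness of $n$ is obtained. The paper first proves $n$ even by a separate determinant argument ($\det A_{-\nu}=(-1)^n\det A_\nu$, so odd $n$ would force a sign change and hence a zero of $\det A_{\nu'}$ somewhere on the connected sphere), and then shows the number of positive roots of the characteristic polynomial $p_\nu$ is constant by expressing it as a contour integral $\frac{1}{2\pi i}\oint_{\partial K} p_\nu'(\lambda)\,d\lambda/p_\nu(\lambda)$ over a fixed compact disc $K$ in the right half-plane containing all positive spectra --- a continuous integer-valued function of $\nu$. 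You instead establish local constancy of the inertia directly via Weyl's perturbation inequality for Hermitian matrices, using the uniform spectral gap $\delta>0$ obtained from continuity of the sorted spectrum and compactness of $S^{m-1}$; the evenness of $n$ then falls out of $p=q$ rather than needing a separate step. Your argument is more elementary (pure linear algebra, no Argument Principle) and slightly more economical; the paper's contour-integral device has the side benefit of being the same mechanism it reuses later (e.g.\ in Lemma \ref{lem:real analyticity of B}) to handle analytic dependence on $\nu$ without tracking individual eigenvalues, which are only continuous, not smooth, in $\nu$. Your closing remark that the conclusion genuinely fails for $m=1$ is apt and consistent with the paper's standing assumption $m\geq 2$.
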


\begin{proof}   
Note that if $\lambda $ is an eigenvalue for $A_\nu$, then $-\lambda$ is an eigenvalue for $A_{-\nu}$.  If $n$ is odd, then 
$\det A_{- \nu} =-\det A_\nu$. If $m \geq 2$, this change of sign in the determinant means that $\det A_{\nu'} =0$ for some other $\nu' \in S^{m-1} $.  Therefore, the assumption that all of eigenvalues are nonzero for each $\nu \in S^{m-1}$  implies that $n$ must be even. 

Also note that all the eigenvalues of $A_\nu$ are real. Let $p_\nu (\lambda) = \det(A_\nu-\lambda I)$ be the characteristic polynomial for $A_\nu$. Let $P_\nu$ be the set of the positive roots of $p_\nu$. We are assuming that $P_\nu$ is bounded away from zero for all $\nu \in S^{m-1}$. Let $K$ be a compact disc in the open right half plane which contains $P_\nu$ in its interior for all $\nu \in S^{m-1}$. The number of roots in $P_\nu$ is given by the Argument Principle:
\[
\textrm{Number of positive roots of $p_\nu$} = \frac{1}{2 \pi i} 
\oint_{\partial K} \frac{p_\nu'( \lambda) \, d \lambda}{p_\nu(\lambda)}.
\]
This is clearly a continuous integer-valued function of $\nu \in S^{m-1}$ which is a connected set for $m \geq 2$. Therefore,  the number of positive  roots of $p_\nu$ is constant for all $\nu \in S^{m-1}$.
Since $n$ is even and $A_{-\nu} = - A_\nu$, we see that $p_{-\nu} (-\lambda) = p_\nu(\lambda)$. 
Therefore if the number of positive roots of $p_\nu$ is $k$, then the number of negative roots of 
$p_{-\nu} ( \cdot)$  is also $k$, which in turn implies that the number of positive roots of $p_{-\nu}$ is $n-k$. Since the number of positive roots is constant in $\nu$, we conclude that $k = n-k$, and hence $k=n/2$.
\end{proof}

\subsection{The complex Green operator}
As a consequence of the above discussion, we assume the following:

\begin{itemize}

\item For each $\nu \in S^{m-1}$, there are $n$ positive eigenvalues $\mu_j^\nu $ for $j$ in some index set $P^\nu$ of cardinality $n$ 
from the set $\{ 1, 2, \dots , 2n \}$ and $n$ negative eigenvalues $\mu_k^\nu$ for $k \in (P^\nu)^c$, the complement of $P^\nu$.  
\end{itemize}
\begin{rem}
Given that our eigenvalues stay bounded away from $0$ 
independently of $\nu\in S^{m-1}$, we may arrange the indices so that $P^\nu = P$ is independent of $\nu$. 
\end{rem}

Denote the set of increasing $q$-tuples by $\I_q = \{K = (k_1,\dots,k_q) \in \N^q : 1 \leq k_1 < k_2 < \cdots < k_q  \leq 2n\}$.
To write the fundamental solution for $\Boxb$ \cite{BoRa21I} applied to a $(0,q)$-form of the form $f_K\, d\z^K$ for a fixed $K\in\I_q$, we 
need to establish some notation. 
Fix $\lambda \in \R^{m-1}\setminus\{0\}$ and set $\nu = \frac{\lambda}{|\lambda|}\in S^{m-1}$. We write $z\in\C^n$ in terms of the
unit eigenvectors of $\phi^\lambda$ which means that $z_j^\lambda = z_j^\nu$ is given by
\[
z^\nu = Z(\nu,z) = U(\nu)^*\cdot z
\]
where $U(\nu)$ is the matrix whose columns are the eigenvectors, $v^\nu_k$, $1 \leq k \leq 2n$ of the directional Levi form $\phi^\nu$,  and $\cdot $ 
represents matrix multiplication with $z$ written as a column vector. Note that the corresponding orthonormal basis of $(0,1)$-covectors for this basis
is 
\[
d \bar Z_j(\nu,z), \ \ 1 \leq j \leq 2n, \ \ \textrm{where} \ \ 
d \bar Z(\nu,z) =  U(\nu)^T \cdot d \bar z
\]
where $d \bar z$ is written as a column vector of $(0,1)$-forms and the superscript $T$ stands
for transpose. 
Note that $z^\nu =  Z(\nu,z) $ depends smoothly on $z \in \C^n$ 
but only  \emph{locally integrable} as a function of $\nu \in S^{m-1}$ \cite{Rai11}.

For each $K \in \I_q$, we will need to express $d \bar z^K$, in terms of $d \bar Z(\nu,z)^L$ for $L \in \I_q$. We have
\begin{equation}
\label{C(K,L)}
d \bar z^K = \sum_{L\in\I_q}  \det (\bar U(\nu)_{K,L}) \, d \bar Z(\nu,z)^L
\end{equation}
where $\bar U(\nu)_{K,L}$ is the $q \times q$ minor $\bar U(\nu)$ comprised of elements in the rows $K$  and columns $L$.
Note that if $q=2n$, then the above sum only has one term and 
$\det(\bar U(\nu)_{K,K}) = 1$. In addition, $\I_0 = \emptyset$, so the sum \eqref{C(K,L)} does not appear.

Until Section \ref{sec:q=n}, we work under the assumption that $0 \leq q \leq 2n$ is fixed and $q \neq n$. 
From \cite{BoRa21I}, the the fundamental solution to $\Box_b$ on $(0,q)$-forms spanned by $d\z^K$ is given by 
convolution with the kernel
\begin{align}
\label{eqn:N_K}
N_K (z,t)
&= K_{n,m} \sum_{L \in \I_q}  
\int_{\nu \in S^{m-1}} \det(\bar U(\nu)_{K,L}) \, d \bar Z(\nu,z)^L   \\
&\times\int_{r=0}^1  \bigg(  \prod_{\atopp{j\in L^c\cap P}{j \in L\cap P^c}} \frac{r^{ |\mu_j^\nu |} |\mu_j^\nu|} {(1-r^{|\mu_j^\nu |})} 
\prod_{\atopp{k\in L\cap P}{k \in L^c\cap P^c}} \frac{|\mu_{k}^\nu|} {(1-r^{|\mu_{k}^\nu|} ) } \bigg)
\frac{1}{  (A(r,\nu,z)-i \nu \cdot t)^{2n+m-1}}  \frac{dr \, d \nu}{r} \nn
\end{align}
where $d \nu$ is surface measure on the unit sphere $S^{m-1}$, the dimensional constant
\begin{equation}\label{eqn:K nm}
K_{n,m} =\frac{4^{2n}(2n+m-2)!}{2(2 \pi)^{m+2n}},
\end{equation}
and
\[
A(r,\nu, z) =  \sum_{j=1}^{2n} |\mu_j^\nu| \left( 
\frac{1+ r^{|\mu_j^\nu|}}{1- r^{|\mu_j^\nu|}} \right) |z_j^\nu|^2 .
\]

Taking derivatives in $z_k$ or $t_\ell$ is relatively straight forward because $z$ only appears in $A(r,\nu,z)$ and $t$ only appears in the
$\nu\cdot t$ term. In particular, we compute that for $1 \leq k \leq 2n$, 
\begin{equation}\label{eqn: dA/dz}
\frac{\p}{\p z_k} A(r, \nu, z) = \sum_{j=1}^{2n} |\mu_j^\nu| \left( 
\frac{1+ r^{|\mu_j^\nu|}}{1- r^{|\mu_j^\nu|}} \right) U(\nu)^*_{j,k} \cdot \overline{Z_j (\nu,z)}.
\end{equation}
Similarly, $\frac{\p^2 A(r,\nu,z)}{\p z_{k_1}\p z_{k_2}}=0$ as are all third (and higher) order derivatives. Also,
\begin{equation}\label{eqn:d^2 A/dz^2}
\frac{\p}{\p z_{k_1} \p\z_{k_2}} A(r, \nu, z) = \sum_{j=1}^{2n} |\mu_j^\nu| \left( 
\frac{1+ r^{|\mu_j^\nu|}}{1- r^{|\mu_j^\nu|}} \right) U(\nu)^*_{j, k_1} \cdot \overline{U(\nu)^*_{j,k_2}}.
\end{equation}

A key fact which will be used later is the following: If $P(u)$ is a polynomial in $u \in \C$, then 
\begin{equation}
\label{eqn:key}
\sum_{j=1}^{2n} P(\mu_j^\nu) |Z_j (\nu, z)|^2
= z^* \cdot U(\nu) \cdot  P(D_\nu) \cdot U(\nu)^* \cdot z = z^* \cdot P(A_\nu) \cdot z
\end{equation}
where $D_\nu $ is the diagonal matrix with the eigenvalues of $A_\nu$ as its diagonal entries.
The importance of this equation is as follows. The right side is a quadratic expression in $z$ and $\bar z$ with coefficients that are polynomials in the coordinates of $\nu$ (since $A_\nu$ depends linearly on $\nu$).

\subsection{Derivative Notation}\label{subsec:derivative notation} We define a multiindex $I = (I^1,I^2) \in \N_0^{4n+m}$ where $I^1 \in \N_0^{4n}$ is  
multiindex that records the
$z$ and $\z$-derivatives and $I^2 \in \N_0^m$ records the $t$-derivatives. Recall that the weighted order of $I$ is
$\la I \ra = |I^1|+2|I^2|$ and the order of $I$ is $|I| = |I_1|+|I_2|$. 
Each derivative in a $t$-variable introduces a component of $\nu$ into the numerator and increases the power of $(A(r, \nu, z)-i \nu \cdot t)$ in the
denominator by $1$. A derivative in a $z$-variable is more complicated to write down -- either the power of $(A(r, \nu, z)-i \nu \cdot t)$
increases by one in the denominator and a component of $\nabla_z A(r,\nu,z)$ is introduced in the numerator or the denominator remains unchanged
and a term in the numerator changes from \eqref{eqn: dA/dz} to (\ref{eqn:d^2 A/dz^2}). We will not need a precise accounting of the constants but 
only the number of first and second derivatives of $A(r,\nu,z)$ that appear. We denote $\nabla_{z,\z} A$ to be the vector of first derivatives
with respect to both the $z$ and $\z$ derivatives and $\nabla^2_{z,\z}$ to denote all of the second order derivatives of $A$. By an abuse of
notation, we write
\begin{align*}
&D^I \Big\{\frac{1}{(A(r, \nu, z)-i \nu \cdot t)^{2n+m-1}}\Big\}
= c_{n,m,|I_2|} D^{I_1} \Big\{\frac{\nu^{I_2}}{(A(r, \nu, z)-i \nu \cdot t)^{2n+m-1+|I_2|}}\Big\} \\
&= \sum_{\atopp{(I_1',I_1'')}{ |I_1'|+2|I_1''|=|I_1|}} c_{n,m,I_1',I_1'',|I_2|} \frac{\nu^{I_2} (\nabla_{z,\z}A(r,\nu,z))^{I_1'}(\nabla^2_{z,\z}A(r,\nu,z))^{I_1''}}
{(A(r, \nu, z)-i \nu \cdot t)^{2n+m-1+|I_1'|+|I_1''|+|I_2|}}.
\end{align*}
where $|I_1'|$ is the number of first order derivatives in $z$ or $\bar z$ and where
$|I_1''|$ is the number of second order derivatives in $z$ and $\bar z$.
Note that $|I_1'|+2|I_1''|=|I_1|$ and not $|I_1'|+|I_1''|$.
For example, suppose that $I_1 = (2,1,0,\dots,0,0)$, which is two $z_1$ factors and one $\z_1$ factor. Then 
\[
(\nabla_{z,\z}A(r,\nu,z))^{I_1} = \Big(\frac{\p}{\p z_1} A(r, \nu, z)\Big)^2 \Big(\frac{\p}{\p \z_1} A(r, \nu, z)\Big).
\]
and  $|I_1'|=1$, $|I_1''| =1$ and $|I_1|=3$.
We analyze each piece of $D^I N$ separately and consequently, the integral to estimate is
\begin{multline}
\label{eqn:NI1I2}
N_{I_1',I_1'',I_2} (z,t) 
= \sum_{L \in \I_q}  \int_{\nu \in S^{m-1}} \det(\bar U(\nu)_{K,L}) \, d \bar Z(\nu,z)^L  \int_{r=0}^1 
\bigg( \prod_{\atopp{j\in L^c\cap P}{j \in L\cap P^c}} \frac{r^{ |\mu_j^\nu |} |\mu_j^\nu|} {(1-r^{|\mu_j^\nu |})} 
\prod_{\atopp{k\in L\cap P}{k \in L^c\cap P^c}} \frac{|\mu_{k}^\nu|} {(1-r^{|\mu_{k}^\nu|} ) } \bigg) \\
\times \frac{\nu^{I_2} (\nabla_{z,\bar z}A(r,\nu,z))^{I_1'}(\nabla^2_{z,\bar{z}}A(r,\nu,z))^{I_1''}}
{(A(r, \nu, z)-i \nu \cdot t)^{2n+m-1+|I_1'|+|I_1''|+|I_2|}} \frac{d \nu \, dr}{r} .
\end{multline}

%
%

\section{The Case when $|t| \geq |z|^2$, $q \neq n$ } 
\label{sec:review/formula for N}
The tricky case is $|t|>|z|^2$ and so we will factor out a $|t|^{2n+m-1+|I_1'|+|I_1''|+|I_2|}$ from the denominator and we will rotate $\nu$ coordinates via an orthogonal matrix $M_t$ chosen 
so that $M_t(t/|t|)$ is the unit vector in the $\nu_1$ direction (so in the new coordinates, $\nu \cdot t = \nu_1)$. We also set $\nu^t = M_t^{-1}\nu$ and
\[
\hatq=\frac{z}{|t|^{1/2}} \in \C^{2n}, \ \ \textrm{and} \ \ Q({\nu^t},\hatq) = \frac{Z({\nu^t},z)}{|t|^{1/2}}
= \frac{U(\nu^t)^* \cdot z}{|t|^{1/2}}.
\]

Note that $|Q({\nu^t}, \hatq)|^2 = |\hatq|^2$ since $U_{\nu^t}$ is unitary.

\m
Since $(\nabla_{z, \bar z} A(r,\nu^t,z))^{I_1'}$ contains a monomial in $z,  \bar z$ of degree $I_1'$,   we obtain 
\begin{align*}
N_{I_1',I_1'',I_2}(z,t) &= |t|^{-(2n+m-1+\frac 12|I_1'|+ |I_1''|+|I_2|)} N_{I_1',I_1'',I_2} (q) \\
&= |t|^{-(2n+m-1+\frac 12\la I\ra)} N_{I_1',I_1'',I_2} (\hatq)
\end{align*} 
where
\begin{align}
\label{eqn:NI1I2 q}
&N_{I_1',I_1'',I_2} (\hatq) \\
&=   \sum_{L \in \I_q}  \int_{\nu^t \in S^{m-1}} \int_{r=0}^1 \det(\bar U(\nu^t)_{K,L}) \, d \bar Z(\nu^t ,z)^L B_L(r, {\nu^t})
\frac{(\nu^t)^{I_2} (\nabla_{z,\bar z}A(r,\nu^t,\hatq))^{I_1'}(\nabla^2_{z,\bar z}A(r,\nu^t,\hatq))^{I_1''}}
{(A(r,\nu^t, \hatq)-i \nu_1)^{2n+m-1+|I_1'|+|I_1''|+|I_2|}}\frac{d \nu \, dr}{r}  \nn
\end{align}
and
\begin{align}
\label{eqn:B_L}
B_L(r, \nu) &= 
\prod_{\atopp{j\in L^c\cap P}{j \in L\cap P^c}} \frac{r^{ |\mu_j^\nu |} |\mu_j^\nu|} {1-r^{|\mu_j^\nu |}} 
\prod_{\atopp{k\in L\cap P}{k \in L^c\cap P^c}} \frac{|\mu_{k}^\nu|} {1-r^{|\mu_{k}^\nu|}} \\
 \label{eqn:A-def}
A(r, \nu, \hatq) &= \sum_{j=1}^{2n} |\mu_j^\nu| \left( 
\frac{1+ r^{|\mu_j^\nu|}}{1- r^{|\mu_j^\nu|}} \right) |Q_j (\nu, \hatq)|^2 .
\end{align}

To prove Theorem \ref{thm:pointwise bounds} in the case that $|t| \geq |z|^2$ and $q \neq n$, it suffices to prove the following theorem.
\begin{thm}
  \label{thm:N I_1 I_2 mainestimate}
  There is a uniform constant $C>0$ so that $|N_{I_1',I_1'',I_2}(\hatq)| \leq C$
for all $\hatq \in \C^{2n}$.
\end{thm}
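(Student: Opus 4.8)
The plan is to bound the integrand of \eqref{eqn:NI1I2 q} uniformly in $\hatq$ after controlling three separate sources of trouble: the singularity of the $r$-integral near $r=1$, the (local) integrability in $\nu^t$ coming from the eigenvector matrix $U(\nu^t)$, and the possible growth in $\hatq$ coming from the numerator factors $(\nabla_{z,\bar z}A)^{I_1'}(\nabla^2_{z,\bar z}A)^{I_1''}$ and the powers of $Z(\nu^t,z)$ in $d\bar Z(\nu^t,z)^L$. Since we are in the regime $|t|\geq|z|^2$, we have $|\hatq|\leq 1$, so uniformity in $\hatq$ only requires estimates that are \emph{bounded} (not decaying) on the unit ball.

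First I would analyze the $r$-integral. Near $r=1$ write $1-r^{|\mu_j^\nu|}\approx |\mu_j^\nu|(1-r)$, so each factor of $B_L(r,\nu^t)$ contributes $\approx 1/(1-r)$, and similarly each of the $2n$ summands in $A(r,\nu^t,\hatq)$ that survive contributes a factor $\approx 1/(1-r)$ times $|Q_j|^2\geq 0$. The crucial structural point (this is why the eigenvalue hypothesis and $q\neq n$ matter) is that the number of ``bad'' factors of $1/(1-r)$ produced by $B_L$ is strictly less than $2n$ for every $L\in\I_q$ with $q\neq n$ — indeed $B_L$ has $|L^c\cap P|+|L\cap P^c|+|L\cap P|+|L^c\cap P^c|$... one must count carefully, but the count of factors in $B_L$ equals the number of indices $j$ with $j\in L$ iff $j\in P^c$, which is a number between $|n-q|$... the point is that a comparison of the $(1-r)^{-\#B_L}$ blow-up against the $(A-i\nu_1)^{-(2n+m-1+\cdots)}$ decay, using $|A-i\nu_1|\gtrsim A\gtrsim (1-r)^{-1}\sum_{j}|\mu_j^\nu||Q_j|^2$ on the relevant range, leaves a net power of $(1-r)$ that is integrable near $r=1$. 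Near $r=0$ the factor $dr/r$ is the only concern; there $B_L$ and $A$ are bounded and $\nu^{I_2}$ is bounded, and the factors $r^{|\mu_j^\nu|}$ in $B_L$ (for $j\in L^c\cap P$ or $j\in L\cap P^c$) together with whatever power of $r$ is needed kill the $1/r$ — one has to check the edge case where that product of $r$-powers is empty, in which case the compensation must come from elsewhere in the formula; I would handle $r\in[0,1/2]$ and $r\in[1/2,1]$ as separate regimes.

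Next I would dispatch the numerator growth. By \eqref{eqn: dA/dz}--\eqref{eqn:d^2 A/dz^2} and the key identity \eqref{eqn:key}, each first derivative $\nabla_{z,\bar z}A$ is, up to bounded coefficients and the $(1\pm r^{|\mu_j^\nu|})$ weights, linear in $\overline{Z_j(\nu^t,z)}$ or $Z_j(\nu^t,z)$, hence $\lesssim \sum_j \frac{1}{1-r}|Q_j(\nu^t,\hatq)|\,|t|^{1/2}$, and each second derivative is $\lesssim \frac{1}{1-r}$ times a bounded matrix entry; the $d\bar Z(\nu^t,z)^L$ factor contributes the $|t|^{-|I_1'|/2}$-type bookkeeping already absorbed in the statement, together with bounded minors $\det(\bar U(\nu^t)_{K,L})$. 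Thus after the rescaling the numerator is $\lesssim (1-r)^{-(|I_1'|+|I_1''|)}\prod |Q_j|^{\leq |I_1'|}$, and since each $|Q_j|^2$ appears with coefficient $\gtrsim (1-r)^{-1}|\mu_j^\nu|$ in $A$, each numerator factor $|Q_j|$ can be ``paid for'' by a half-power of $A$ in the denominator (the elementary inequality $x^a \lesssim (1+x^2)^{a/2}\lesssim (1+ \text{const}\cdot A)^{a/2}$), and each extra $(1-r)^{-1}$ in the numerator is matched by the corresponding increase $+|I_1'|+|I_1''|$ in the exponent of $(A-i\nu_1)$ together with a compensating $(1-r)$ from $A\gtrsim (1-r)^{-1}$. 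The bookkeeping is designed so that the \emph{net} power of $(1-r)$ after all cancellations is the same as in the $I=0$ case, which reduces everything to the base estimate.

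The main obstacle is the $\nu^t$-integration combined with the $r$-integration near $r=1$: $U(\nu^t)$, and hence $Q(\nu^t,\hatq)$ and the eigenvalues $\mu_j^{\nu^t}$, are only locally integrable in $\nu^t$ (the eigenvector field has singularities where eigenvalues cross), so I cannot pull out an $L^\infty_\nu$ bound — I must show the remaining integrand, after the $r$-integral is performed, is dominated by a fixed $L^1(S^{m-1})$ function independent of $\hatq$. The route is: (i) bound everything pointwise in $\nu^t$ by the eigenvalue-free quantities $|\mu_j^{\nu^t}|\le \|A_{\nu^t}\|\lesssim 1$ and $|\mu_j^{\nu^t}|\ge c>0$ (the uniform gap hypothesis) to make the $r$-integral estimate uniform in $\nu^t$; (ii) observe that after doing so the $r$-integral produces a constant, so what survives is $\int_{S^{m-1}}|\det(\bar U(\nu^t)_{K,L})|\,|d\bar Z(\nu^t,z)^L|\cdot(\text{bounded})\,d\nu^t$, and $|\det(\bar U(\nu^t)_{K,L})|\le 1$ while the ``form coefficient'' part is bounded, so the integral over the compact sphere is finite. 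The one delicate point inside (i) is that the lower bound $A(r,\nu^t,\hatq)\gtrsim (1-r)^{-1}\sum_j|Q_j|^2$ degenerates as $|\hatq|\to 0$; there one instead uses $|A-i\nu_1|\geq |\nu_1|$ on part of the sphere and $|A-i\nu_1|\geq A\gtrsim(1-r)^{-1}|\hatq|^2$ elsewhere, splitting $S^{m-1}$ accordingly — but since $q\neq n$ guarantees at least one surviving ``$\frac{1+r^{|\mu|}}{1-r^{|\mu|}}$'' factor in $A$ with a strictly smaller count of blow-ups in $B_L$, the worst case $\hatq=0$ is still integrable, and that is precisely the computation I expect to occupy the bulk of the real proof.
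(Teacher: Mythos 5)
Your plan is a pure size/domination argument, and that cannot prove this theorem: the uniformity in $\hatq$ fails at the level of absolute values. Take $I=0$ and $\hatq=0$. Near $r=1$ every one of the $2n$ factors of $B_L(r,\nu)$ blows up like $(1-r)^{-1}$ (your claim that $q\neq n$ leaves strictly fewer than $2n$ such factors is false — the first and second products in \eqref{eqn:B_L} both behave like $(1-r)^{-1}$ as $r\to1$; the hypothesis $q\neq n$ is what saves the $r\to 0$ end, where it produces the factor $r^{c_0}$ of \eqref{eqn:B-est}). Since $A(r,\nu,0)=0$, the modulus of the integrand near $r=1$ is comparable to $(1-r)^{-2n}|\nu_1|^{-(2n+m-1)}$, which is not integrable in $r$ or in $\nu_1$; for small $\hatq>0$ the integral of the modulus is finite but grows like a negative power of $|\hatq|$, so no splitting of $S^{m-1}$ by which term of $|A-i\nu_1|$ dominates, and no fixed $L^1(S^{m-1})$ majorant independent of $\hatq$, can exist. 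The same problem appears on $0<r\leq 1/2$: there the $r$-integral is fine, but $|A-i\nu_1|^{-(2n+m-1)}\geq$ is only controlled by $|\nu_1|^{-(2n+m-1)}$ when $\hatq\to0$, again non-integrable. The boundedness asserted in the theorem is a cancellation phenomenon, and your proposal supplies no cancellation mechanism in either regime.

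For comparison, the paper's proof is organized entirely around extracting that cancellation while coping with the non-smoothness of $\mu_j^\nu$ and $U(\nu)$ in $\nu$: the eigenvalue/eigenvector data are recombined into the analytic matrix functions $r^{-\bar A_\nu}$ and $\sqrt{A_\nu^2}$ (Lemmas \ref{lem:real analyticity of B}, \ref{lem:real analyticity of B_L}, \ref{lem:square root}), so no merely locally integrable quantities survive. On $1/2<r<1$ one passes to $s=\frac{1+r}{1-r}$, expands $B$, $\det([r^{-\bar A_\nu}]_{K,J})$ and $A(r,\nu,\hatq)-i\nu_1$ about $s=\infty$, and keeps track of the parity structure: the surviving $\nu$-monomials have degree $\ell-e$ with $e$ even (Proposition \ref{prop: structure of non-z terms in terms of nu}). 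After integrating out the odd part over the sphere, the exact $s$-integration produces a factor $\hat r^{-1}b(\hat r)$, and the crucial point (Lemma \ref{lem:basicA-estimate}) is that $b$ extends analytically to $\hat r=0$ with $b(0)=0$, proved by deforming the $\nu_1$-integral to the upper unit semicircle; the paper even notes this fails if the parity were odd. On $0<r\leq 1/2$ the $\nu_1$-singularity as $\hatq\to0$ is removed by repeated integration by parts in $\nu_1$, reducing $(A-i\nu_1)^{-(2n+m-1+\cdots)}$ to a logarithm, with the resulting $|\ln r|^N r^{c_0-1}$ still integrable. Your write-up correctly identifies $\hatq\to0$ as the delicate point, but the fix you sketch is exactly the step that cannot work; you would need to replace it by an oscillatory/contour or integration-by-parts argument of the above type.
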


There are two primary terms which need to be analyzed: $B_L(r, \nu)$, and $A(r, \nu, \hatq)$. We first concentrate on the singularity at $r=1$. The singularity at $r=0$ is easier and is handled in Section \ref{sec:lower half}.

%
%
%
\section{Analysis of $B_L(r,\nu)$ in the case $r>1/2$, $q \neq n$} \label{sec:begin upper half}
It turns out that the key to analyzing $B_L(r,\nu)$ is $B_\emptyset(r,\nu)$.
To this end, 
for $0<r<1$ and $u \in \R$, let
\begin{equation}
\label{eqn:def f,g}
f(r,u) = \frac{ur^u}{(1-r^u)} \ \ \ g(r,u) = f(r,u) +u = \frac{u}{(1-r^u)}.
\end{equation}
Note that $g(r,u) = f(r, -u)$. Since $\mu_j^\nu >0$ for $j \in P$ and $\mu_{k}^\nu <0$ for 
$k \in P^c$, we can write
\[
B(r,\nu) = B_{\emptyset}(r,\nu) = \prod_{j\in P} \frac{r^{ |\mu_j^\nu |} |\mu_j^\nu|} {(1-r^{|\mu_j^\nu |})} 
\prod_{k\in P^c} \frac{|\mu_{k}^\nu|} {(1-r^{|\mu_{k}^\nu|} ) }
\]
then
\begin{align}
\label{eqn:Br1}
B(r, \nu) \frac{dr}{r} &=\prod_{j \in P} f(r, \mu_j^\nu) \prod_{k \in P^c} g(r, -\mu_{k}^\nu) \frac{dr}{r} \\
\label{eqn:Br2}
&=\prod_{j=1}^{2n} f(r, \mu_j^\nu)  \frac{dr}{r} .
\end{align}
Both descriptions of this term are useful.
Note that the eigenvalues $\mu_j^\nu$ are not necessarily smooth in $\nu \in S^{m-1}$ (though they are continuous). However as the next lemma shows, $B(r, \nu)$ is real analytic in both $0 <r<1$ and in $\nu \in S^{m-1}$ and this uses the fact that the eigenvalues are bounded away from zero.

\begin{lem}
\label{lem:real analyticity of B}
The function $B(r, \nu) = \prod_{j \in P} f(r, \mu_j^\nu) \prod_{k \in P^c} g(r, -\mu_{k}^\nu)$ is real analytic in both $0 <r<1$ and in $\nu \in S^{m-1}$.
\end{lem}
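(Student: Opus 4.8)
The plan is to show that $B(r,\nu)$ extends holomorphically to a neighborhood of $(0,1)\times S^{m-1}$ inside $\C\times\C^m$, which implies real analyticity. The key point is that although the individual eigenvalues $\mu_j^\nu$ are merely continuous in $\nu$, the \emph{unordered collection} $\{\mu_1^\nu,\dots,\mu_{2n}^\nu\}$ depends real-analytically on $\nu$ in the sense that every \emph{symmetric} function of the eigenvalues does: these are precisely the coefficients of the characteristic polynomial $p_\nu(\lambda)=\det(A_\nu-\lambda I)$, which are polynomials in the entries of $A_\nu$, hence polynomials in $\nu$ (since $\nu\mapsto A_\nu$ is linear). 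So the strategy is to rewrite $B(r,\nu)$ as a symmetric function of the eigenvalues and then invoke this.

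First I would use the description \eqref{eqn:Br2}, namely $B(r,\nu)=\prod_{j=1}^{2n} f(r,\mu_j^\nu)$ with $f(r,u)=ur^u/(1-r^u)$. The obstruction is that $f(r,u)$ is not a polynomial in $u$, so $\prod_j f(r,\mu_j^\nu)$ is not obviously a symmetric polynomial in the eigenvalues. To fix this I would argue locally near a fixed $\nu_0\in S^{m-1}$: by the hypothesis that the eigenvalues are bounded away from $0$ uniformly in $\nu$, there is $\delta>0$ with $|\mu_j^\nu|\geq\delta$ for all $j$ and all $\nu$, and since $S^{m-1}$ is compact the eigenvalues also stay bounded, say $|\mu_j^\nu|\leq R$. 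Thus for fixed $r\in(0,1)$ the eigenvalues all lie in the compact set $\Sigma=\{u\in\R:\delta\leq|u|\leq R\}$, on which $u\mapsto f(r,u)$ is real-analytic (the only singularities of $f(r,\cdot)$ as a function of $u$, for $0<r<1$, are at $u=0$ and at $u$ with $r^u=1$, i.e. $u$ purely imaginary nonzero or $u=0$; none lie in $\Sigma$). Hence $f(r,\cdot)$ extends holomorphically to a complex neighborhood $\Omega$ of $\Sigma$, and moreover $(r,u)\mapsto f(r,u)$ is jointly holomorphic for $r$ near $(0,1)$ and $u\in\Omega$.

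Next, the crucial step: express $\prod_{j=1}^{2n} f(r,u_j)$, for $u_j$ ranging over a neighborhood of $\Sigma^{2n}$, as a holomorphic function of the elementary symmetric polynomials $e_1(u),\dots,e_{2n}(u)$. This follows from the fact that a symmetric holomorphic function on a domain of the form $\Omega^{2n}$ (with $\Omega\subset\C$) factors through the proper map $\C^{2n}\to\C^{2n}$, $u\mapsto (e_1(u),\dots,e_{2n}(u))$; concretely, $\prod_j f(r,u_j)$ is invariant under permutations of the $u_j$, and one can realize it via a contour integral $\prod_j f(r,u_j)=\exp\big(\sum_j\log f(r,u_j)\big)$ only where $f\neq 0$ — cleaner is to note directly that any symmetric polynomial in the $u_j$ with holomorphic-in-$r$ coefficients is a polynomial in the $e_k$, and pass to the limit using that $f(r,\cdot)$ is a locally uniform limit of polynomials on $\Sigma$ (Runge/Taylor). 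Either way one gets $B(r,\nu)=H(r,e_1(\mu^\nu),\dots,e_{2n}(\mu^\nu))$ for a function $H$ holomorphic in a neighborhood of the relevant set. Finally, each $e_k(\mu^\nu)$ is (up to sign) the coefficient of $\lambda^{2n-k}$ in $p_\nu(\lambda)=\det(A_\nu-\lambda I)$, hence a polynomial in the entries of $A_\nu$, hence a polynomial — in particular real-analytic — in $\nu\in S^{m-1}$. Composing, $B(r,\nu)$ is real-analytic in $(r,\nu)$ on $(0,1)\times S^{m-1}$, as claimed.

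The main obstacle is the middle step — justifying that the non-polynomial symmetric expression $\prod_j f(r,u_j)$ descends to a holomorphic function of the symmetric-function coordinates. The honest way is: the partition into $P$ and $P^c$ (positive vs.\ negative eigenvalues) is itself locally constant and the eigenvalues within each block stay in a fixed compact subinterval of $(0,\infty)$ or $(-\infty,0)$; on such an interval $f(r,\cdot)$ is genuinely real-analytic and we can apply the classical fact that a real-analytic (indeed continuous) function of the unordered eigenvalues of a real-analytically-varying family of Hermitian matrices is real-analytic, provided the eigenvalue groups do not collide — which is exactly guaranteed here since the positive and negative groups are separated by the spectral gap around $0$. (Within a group eigenvalues may cross, but since we only ever evaluate the \emph{symmetric} function $\prod f(r,\mu_j^\nu)$ over that group, crossings are harmless.) I expect the cleanest write-up uses the characteristic-polynomial/contour-integral formula for the number of positive roots already appearing in the proof of the Proposition, combined with a Cauchy-integral representation $\prod_{j}f(r,\mu_j^\nu)=\Big(\tfrac{1}{2\pi i}\oint f\text{-weighted}\Big)$-type identity to make the dependence on the (polynomial) coefficients of $p_\nu$ manifest.
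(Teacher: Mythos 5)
Your overall route --- never take logarithms, view $B(r,\nu)=\prod_{j=1}^{2n}f(r,\mu_j^\nu)$ as a symmetric holomorphic function of the spectrum, and push the $\nu$-dependence through the coefficients of $p_\nu(\lambda)=\det(A_\nu-\lambda I)$, which are polynomials in $\nu$ --- is a legitimate alternative to the paper's proof, and your closing sentence essentially describes what the paper actually does. The paper splits $B=B^+B^-$ over $P$ and $P^c$, uses that $\tilde f(r,z)=zr^z/(1-r^z)$ is positive on the positive real axis so that $\ln\tilde f(r,z)$ is analytic in a complex neighborhood of it, and then writes $\ln B^+(r,\nu)=\sum_{j\in P}\ln\tilde f(r,\mu_j^\nu)=\frac{1}{2\pi i}\oint_\gamma \ln\tilde f(r,z)\,\frac{D'(\nu,z)}{D(\nu,z)}\,dz$ with $D(\nu,z)=\det(A_\nu-zI)$; analyticity in $\nu$ is then manifest since $D$ is polynomial in $\nu$ and nonvanishing on $\gamma$. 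The split into positive and negative blocks is needed there precisely to keep the logarithm single-valued; your version avoids both the logarithm and the split (the poles of $f(r,\cdot)$ are purely imaginary, so a complex neighborhood of $[-R,-\delta]\cup[\delta,R]$ suffices and crossings anywhere in the spectrum are harmless), at the price of having to justify the descent of a symmetric holomorphic function to a holomorphic function of $e_1,\dots,e_{2n}$ --- which is the real content of your middle step.

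Two cautions about that middle step as written. First, the Runge/Taylor variant is not correct as stated: approximating $f(r,\cdot)$ uniformly by polynomials \emph{only on the real compact set} $\Sigma$ gives uniform convergence of the corresponding polynomials in $(e_1,\dots,e_{2n})$ only on the image of $\Sigma^{2n}$ under $u\mapsto(e_1(u),\dots,e_{2n}(u))$, a thin, non-open subset of $\C^{2n}$, and a uniform limit of polynomials on such a set need not be analytic anywhere near it. You must instead approximate uniformly on a compact \emph{complex} neighborhood of $\Sigma$ (Runge applies, as the complement of two closed rectangles is connected), so that the limit is holomorphic on the open image of that neighborhood, and carry $r$ along as an extra holomorphic parameter to get joint analyticity in $(r,\nu)$; alternatively, invoke the genuine theorem that a permutation-invariant holomorphic function on $\Omega^{2n}$ is a holomorphic function of the elementary symmetric polynomials --- true, but not a one-line fact. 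Second, the parenthetical claim that a merely \emph{continuous} symmetric function of the eigenvalues of a real-analytic Hermitian family is real-analytic (given separation of the groups) is false --- the largest eigenvalue within a group is such a function and fails to be analytic at crossings inside the group; the fact you need, and the one that is true, is the statement for symmetric \emph{analytic} functions. With those repairs your argument goes through and is, if anything, slightly more economical than the paper's in that it needs no positivity or choice of branch.
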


\begin{proof} Using (\ref{eqn:Br1}), write 
\begin{align*}
B(r, \nu) &=  B^+(r, \nu)  \cdot  B^-(r,  \nu) \ \ \textrm{where} \\
B^+(r, \nu) &=\prod_{j \in P} f(r, \mu_j^\nu); \ \ \ B^-(r, \nu) =  \prod_{k \in P^c} g(r, -\mu_{k}^\nu) .
\end{align*}
It suffices to show that $\ln B^+(r, \nu) $ and $\ln B^-(r, \nu)$ are real analytic in $0 <r<1$ and in $\nu \in S^{m-1}$. We have 
\[
\ln B^+(r, \nu) = \sum_{j \in P} \ln \tilde f(r, \mu_j^\nu)
\]
where $\tilde f(r,z) = \frac{z r^{z}}{(1-r^z)}$ for $z=u+iv$.  Since $\tilde f(r, z)>0$ for $z=u>0$, $\ln (\tilde f(r,z)) $ is real analytic in $0 < r<1$ and complex analytic as a function of $z=u+iv$ in a neighborhood, $U \subset \C$ containing the set $\{ u+i0; \  u>0\}$. Note that by hypothesis, there is a compact set $K \subset \{u+i0; \  u>0 \}$ which contains all the positive eigenvalues $\mu_j^\nu$ for $j \in P$ and $\nu \in S^{m-1}$. Let $\gamma \in U$ be a smooth simple closed curve which contains $K$. Let $D(\nu,z) = \det (A_\nu -zI)$ where recall that $A_\nu$ is the Hermitian matrix for $\phi_\nu (z, z)$. The eigenvalues $\mu_j^\nu$, $j \in P$ are the roots of the analytic function $z \to D(\nu,z)$ that lie inside $\gamma$. By standard Residue theory, we have
\[
\ln B^+(r, \nu) = \sum_{j \in P} \ln \tilde f(r, \mu_j^\nu)= \frac{1}{2 \pi i} \oint_{z \in \gamma} \frac{\ln \tilde f(r, z) D'(\nu,z) \, dz}{D(\nu,z)}
\]
where $D'(\nu,z)$ refers to the $z$-derivative of $D(\nu,z)$. Now observe that the right side is real analytic in $\nu \in S^{m-1}$ since $\nu \to A_\nu$ is real analytic in $\nu$ (and $D(\nu,z) \not=0$
for $z \in \gamma$). The proof of the analyticity of $\ln B^-(r, \nu)$ is similar. This completes the proof of the lemma.
\end{proof}

We observe that
\begin{align*}
B_L(r,\nu) &= B(r,\nu) \prod_{j \in L\cap P^c} \frac{f(r,-\mu^\nu_j)}{f(r,\mu^\nu_j)} \prod_{k \in L\cap P} \frac{g(r,\mu^\nu_k)}{g(r,-\mu^\nu_k)} = B(r,\nu)\prod_{j\in L} r^{-\mu^\nu_j}. 
\end{align*}

We need the following piece of notation for the next lemma.
For $J\in\I_q$ and $(\ell_1,\dots,\ell_q)\in\N^q$, set 
$\ep^{(\ell_1,\dots,\ell_q)}_J = (-1)^{|\sigma|}$ if $\{\ell_1,\dots,\ell_q\} = J$ as sets and $|\sigma|$ is the length of the permutation that takes 
$(j_1,\dots,j_q)$ to $J$. Set
$\ep^{(\ell_1,\dots,\ell_q)}_J=0$ otherwise.

It may be the case the $B_L(r,\nu)$ is \emph{not} analytic, however, we have the following lemma. We also use the notation that if $M$ is a matrix and
$J,L\in \I_q$, the $M_{J,L}$ is the $q\times q$ minor of $M$ with entries $M_{j\ell}$, $j \in J$, $\ell \in L$.
\begin{lem}
\label{lem:real analyticity of B_L}
The function 
\[
\nu \mapsto  \sum_{L \in \I_q} \det(\bar U(\nu)_{K,L}) \, d \bar Z(\nu,z)^L \prod_{j\in L} r^{-\mu^\nu_j} 
\]
is real analytic in both $0 <r<1$ and in $\nu \in S^{m-1}$.  Moreover, 
\begin{equation}\label{eqn:sum for r^-A}
\sum_{L\in\I_q} \det(\bar U(\nu)_{K,L})\, d\bar Z^L(\nu,z) \prod_{j\in L} r^{-\mu_j^\nu}  
= \sum_{J\in\I_q} \det ([r^{-\bar A_\nu}]_{K,J})\, d\z^J.
\end{equation}
\end{lem}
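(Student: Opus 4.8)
The plan is to express both sides of \eqref{eqn:sum for r^-A} as the $(K,\cdot)$-row of the same matrix identity, namely the change-of-basis formula \eqref{C(K,L)} applied to the operator $r^{-\bar A_\nu}$, and then deduce analyticity from the resulting closed form. First I would recall that in the eigenbasis of $A_\nu$ given by the columns of $U(\nu)$, the operator $A_\nu$ is diagonalized as $A_\nu = U(\nu)\, D_\nu\, U(\nu)^*$, hence for any function $h$ holomorphic near the spectrum (here $h(\zeta) = r^{-\zeta}$, which is entire), $h(A_\nu) = U(\nu)\, h(D_\nu)\, U(\nu)^*$, and $h(D_\nu)$ is the diagonal matrix with entries $h(\mu_j^\nu)$. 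Applying this to $\bar A_\nu = \bar U(\nu)\, D_\nu\, \bar U(\nu)^* = \bar U(\nu)\, D_\nu\, \overline{U(\nu)}^*$ (using that $A_\nu$ is Hermitian with real eigenvalues, so $D_\nu$ is real), we get $r^{-\bar A_\nu} = \bar U(\nu)\, \mathrm{diag}(r^{-\mu_j^\nu})\, \bar U(\nu)^T$ after identifying the conjugate-transpose with the transpose on the conjugated unitary. The Cauchy--Binet formula then gives, for each $K, J \in \I_q$,
\[
\det\big([r^{-\bar A_\nu}]_{K,J}\big) = \sum_{L\in\I_q} \det(\bar U(\nu)_{K,L})\,\Big(\prod_{j\in L} r^{-\mu_j^\nu}\Big)\, \det\big((\bar U(\nu)^T)_{L,J}\big),
\]
using that the middle factor is diagonal so only the $L$-by-$L$ minors survive and they contribute the product $\prod_{j\in L} r^{-\mu_j^\nu}$.

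Next I would match this against the left side by unwinding the definition of $d\bar Z(\nu,z)^L$ in terms of $d\z^J$. Since $d\bar Z(\nu,z) = U(\nu)^T\cdot d\z$, the wedge $d\bar Z(\nu,z)^L = \bigwedge_{\ell\in L} \big(\sum_j U(\nu)^T_{\ell j}\, d\z_j\big)$ expands, by multilinearity and antisymmetry of the wedge, as $\sum_{J\in\I_q} \det\big((U(\nu)^T)_{L,J}\big)\, d\z^J$. Substituting this into the left side of \eqref{eqn:sum for r^-A} and interchanging the (finite) sums over $L$ and $J$ yields exactly $\sum_{J\in\I_q}\Big(\sum_{L\in\I_q}\det(\bar U(\nu)_{K,L})\big(\prod_{j\in L} r^{-\mu_j^\nu}\big)\det\big((U(\nu)^T)_{L,J}\big)\Big) d\z^J$, which coincides term-by-term with the Cauchy--Binet expression above once one notes $U(\nu)^T = \overline{\bar U(\nu)^T}$ — i.e.\ the bookkeeping on where the conjugation bars sit must be checked carefully, but it is forced by the conventions already fixed in \eqref{C(K,L)} and the display defining $d\bar Z(\nu,z)$. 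This proves \eqref{eqn:sum for r^-A}.

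For the real-analyticity claim, I would argue exactly as in the proof of Lemma~\ref{lem:real analyticity of B}: the map $\nu\mapsto A_\nu$ is real analytic (indeed linear) on $S^{m-1}$, and the entries of $r^{-\bar A_\nu}$ can be written via the holomorphic functional calculus as contour integrals
\[
[r^{-\bar A_\nu}]_{k\ell} = \frac{1}{2\pi i}\oint_\gamma r^{-\zeta}\,\big[(\zeta I - \bar A_\nu)^{-1}\big]_{k\ell}\, d\zeta,
\]
where $\gamma$ is a fixed contour enclosing the whole spectrum of every $A_\nu$ — this is possible precisely because the eigenvalues are bounded (the $A_\nu$, $\nu\in S^{m-1}$, form a compact family) \emph{and} bounded away from a neighborhood of any point where the resolvent could blow up is not needed here since $\gamma$ just needs to enclose all eigenvalues; boundedness suffices for this part. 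The integrand depends real-analytically on $\nu$ (through $\bar A_\nu$) for $\zeta\in\gamma$ and jointly real-analytically on $0<r<1$, so each entry of $r^{-\bar A_\nu}$ is real analytic in $(r,\nu)$; hence so is every minor determinant $\det([r^{-\bar A_\nu}]_{K,J})$, and therefore so is the left side of \eqref{eqn:sum for r^-A}, coefficient by coefficient in $d\z^J$.

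The main obstacle I anticipate is purely notational rather than conceptual: keeping the conjugation bars and the transpose-versus-conjugate-transpose distinctions consistent between the paper's convention $d\bar Z(\nu,z) = U(\nu)^T d\z$ (transpose, no bar) and the eigendecomposition $A_\nu = U(\nu) D_\nu U(\nu)^*$, so that the Cauchy--Binet expansion lines up with the wedge-product expansion without a spurious conjugate. Once the convention is pinned down — that $\bar U(\nu)$ in \eqref{C(K,L)} is the entrywise conjugate of $U(\nu)$, and $D_\nu$ is real — the algebra is routine. A secondary point worth a sentence is that $\bar U(\nu)$ need not itself be analytic in $\nu$ (the eigenvectors are only locally integrable, as noted after \eqref{C(K,L)}), which is exactly why one cannot conclude analyticity factor-by-factor and must instead pass through the manifestly analytic object $r^{-\bar A_\nu}$; this is the same phenomenon that made Lemma~\ref{lem:real analyticity of B} nontrivial.
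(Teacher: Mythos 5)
Your proposal is correct and follows essentially the same route as the paper: both rest on the fact that conjugation by $U(\nu)$ diagonalizes $r^{-\bar A_\nu}$, a minor-determinant (Cauchy--Binet) expansion matched against the wedge expansion $d\bar Z(\nu,z)^L=\sum_{J\in\I_q}\det\big((U(\nu)^T)_{L,J}\big)\,d\z^J$, and deducing analyticity from the manifestly analytic object $r^{-\bar A_\nu}$ rather than from $U(\nu)$ itself (your contour-integral justification of that last point is a fine substitute for the paper's one-line remark). The only slip is a conjugation bar: since $\bar U(\nu)U(\nu)^T=I$, the correct identity is $r^{-\bar A_\nu}=\bar U(\nu)\,\mathrm{diag}(r^{-\mu_j^\nu})\,U(\nu)^T$ rather than with $\bar U(\nu)^T$ on the right, after which the Cauchy--Binet minors are exactly $\det\big((U(\nu)^T)_{L,J}\big)$ and match the wedge expansion directly, with no further conjugation identity needed.
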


\begin{rem} In view of the above expression for $B_L(r, \nu)$, we record the following equation for future reference
\begin{equation}
\label{eqBL}
\sum_{L\in\I_q} \det(\bar U(\nu)_{K,L})\, d\bar Z^L(\nu,z) B_L(r, \nu) =
\sum_{J\in\I_q} \det ([r^{-\bar A_\nu}]_{K,J}) B(r, \nu) \, d\z^J.
\end{equation}
which is real analytic in $0<r<1$, $\nu \in S^{m-1}$ in view of Lemma \ref{lem:real analyticity of B}.
\end{rem}

\m
\begin{proof} Once we show (\ref{eqn:sum for r^-A}), the analyticity statement follows immediately from the fact that $\bar A_\nu$ 
depends analytically on $\nu$ and therefore the matrix $r^{-\bar A_\nu}$ will also
depend analytically on $\nu$.
\m

First, we record two basic equations. Suppose $M$ is a $N \times N$ 
matrix with complex entries and consider $w=Mz$, where $w, z \in \C^N$. If $1 \leq q \leq N$ and $K \in\I_q$, then
\begin{equation}
\label{dwK} 
d \bar w^K = \sum_{J \in I_q} \det (\bar M_{K,J}) \, d \bar z^J
\end{equation}
This is easily established using standard multilinear algebra.

Second, conjugation by $U(\nu)$ diagonalizes the matrix $A_\nu$, and  diagonalizes $r^{-A_\nu}$. In particular,
\begin{equation}
\label{Rdiagonal}
 R^{-\mu^\nu} = U(\nu)^T r^{-\bar A_\nu}   \bar U(\nu) 
\end{equation}
where $R^{-\mu^\nu}$ is the $(2n)\times(2n)$ matrix with real entries, $r^{-\mu^\nu_j}$, on the diagonal and zeros off of the diagonal.

Now we start with the left side of (\ref{eqn:sum for r^-A}):
\begin{align*}
\sum_{L\in\I_q} \det(\bar U(\nu)_{K,L})\, d\bar Z^L(\nu,z) \prod_{j\in L} r^{-\mu_j^\nu}
&=\sum_{L\in\I_q} \det(\bar U(\nu)_{K,L}) \det (R^{-\mu^\nu}_{L,L}) \, d\bar Z^L(\nu,z) \\
&= \sum_{L\in\I_q} \det([\bar U(\nu) R^{-\mu^\nu}]_{K,L})\, d\bar Z^L(\nu,z)
\end{align*}
where the second equation uses the fact that $R^{-\mu^\nu}$ is a diagonal matrix. Now use (\ref{Rdiagonal})
and the fact that $\bar U(\nu) U(\nu)^T = I$ to conclude that 
\begin{align*}
\textrm{Left side of (\ref{eqn:sum for r^-A})} &= \sum_{L \in \I_q}\det  [r^{-\bar A_\nu}  \bar U(\nu)]_{K,L} \,  d\bar Z^L(\nu,z) 
= d( r^{-\bar A_\nu} \bar z)^K
\end{align*}
where the last equality uses the equation $ z =  U(\nu)  Z(\nu,z)$ as well as (\ref{dwK}) with $w= r^{-A_\nu} z$. 
Now (\ref{eqn:sum for r^-A}) follows by using  (\ref{dwK}) to expand out the right side of the above equation
in terms of $d \bar z^J$.
\end{proof}

We make the following change of variables for $s>1$:
\begin{equation}
\label{eqn:change}
r=r(s)= \frac{s-1}{s+1} \ \ \textrm{or equivalently} \ \ s=\frac{r+1}{1-r} \ \ \textrm{with} \ \ 
\frac{dr}{r} = \frac{2 \, ds}{(s^2-1)}.
\end{equation} 
Note that $1/2 \leq r <1$ transforms to $s\geq 3$.

Our goal for the remainder of the section is to prove the following proposition.
\begin{prop}\label{prop: structure of non-z terms in terms of nu}
\begin{enumerate}[1.]
\item The expansion of $\frac{B(r(s), \nu) r'(s)}{r(s)}$ around $s=\infty$ is
\begin{align}
\label{eqn:B1}
\frac{B(r(s), \nu) r'(s)}{r(s)}&=
\frac{2}{2^{2n}(1-1/s^2)}\left[ \sum_{\ell=0}^{2n-1} P_\ell (\nu) s^{2n-\ell-2} +\frac{O(s, \nu)}{s^2} \right] \\
\label{eqn:B2}
\textrm{Typical Monomial in $P_\ell(\nu)$} &=\nu^{\ell - e}; \ \ \textrm{where $e$ is even with
$0 \leq e \leq \ell$}.
\end{align}
Here, $P_\ell (\nu) $ is a polynomial in $\nu = (\nu_1, \dots \, \nu_{m} )  \in S^{m-1}$ of total degree $\ell$.  By 
an abuse of notation, the term, $\nu^{\ell-e}$, in (\ref{eqn:B2})  stands for a monomial in the coordinates of $\nu$ of total degree $\ell-e$.

Additionally, the (Taylor) remainder $O(s, \nu) $ is real analytic in $s>1 $ and $ \nu \in S^{m-1}$. Furthermore  $O(s, \nu) $ is bounded in $s>1$.

\item Modulo coefficients (that are computable but not relevant to the estimate),
the expansion of $\det([r(s)^{-\bar A_\nu}]_{K,J})$ around $s=\infty$ is comprised of a sums of terms 
\begin{align}
\label{eqn:nu decomp of r^-A}
\frac{\nu^{\ell'-e'}}{s^{\ell'}}\  & \textrm{where $\ell' \geq 1$, $e'$ is an even integer with $0 \leq e' \leq \ell'$, and} \\
&\textrm{$\nu^{\ell'-e'}$ is a monomial of degree $\ell'-e'$ in the coordinates of $\nu \in S^{m-1}$}. \nn
\end{align}

\end{enumerate}
\end{prop}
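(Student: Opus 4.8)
The plan is to prove both parts by a careful asymptotic analysis of the building-block function $f(r,u)$ and the matrix exponential $r^{-\bar A_\nu}$ under the change of variables \eqref{eqn:change}, tracking the parity of the powers of $\nu$ that appear. First I would establish the scalar expansion. Writing $r = r(s) = (s-1)/(s+1)$ and using $g(r,u) = f(r,-u)$ and \eqref{eqn:Br2}, we have $\frac{B(r(s),\nu)r'(s)}{r(s)} = \prod_{j=1}^{2n} f(r(s),\mu_j^\nu)\cdot \frac{r'(s)}{r(s)}$, and from \eqref{eqn:change}, $\frac{r'(s)}{r(s)} = \frac{2}{s^2-1}$. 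The key computation is that for fixed $u$, $f(r(s),u) = \frac{u r(s)^u}{1-r(s)^u}$ expands around $s=\infty$ as $\frac{1}{2}s + (\text{lower order in } s)$ with the coefficients being polynomials in $u$; more precisely, since $r(s) = 1 - 2/s + O(1/s^2)$, one computes $r(s)^u = 1 - 2u/s + O(1/s^2)$, so $1 - r(s)^u = 2u/s + O(1/s^2)$ and $f(r(s),u) = \frac{u(1 + O(1/s))}{2u/s + O(1/s^2)} = \frac{s}{2}(1 + O(1/s))$. Iterating this expansion to all orders, $f(r(s),u)$ has a Laurent-type expansion $\frac{s}{2}\sum_{k\geq 0} c_k(u) s^{-k}$ where each $c_k(u)$ is a polynomial in $u$ of degree exactly $k$ (this degree count is the crucial bookkeeping). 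Multiplying $2n$ such factors and the extra $\frac{2}{s^2-1}$, we obtain $\frac{2}{2^{2n}(1-1/s^2)}$ times a series in $s$ whose coefficient of $s^{2n-\ell-2}$ is a polynomial in the $\mu_j^\nu$ of total degree $\ell$. Then \eqref{eqn:key} converts any symmetric polynomial in the eigenvalues $\mu_j^\nu$ into a polynomial in the entries of $A_\nu$, hence (by linearity of $\nu\mapsto A_\nu$) into a polynomial in $\nu$; this gives $P_\ell(\nu)$ of total degree $\ell$. The real-analyticity and boundedness of the remainder $O(s,\nu)$ follow from Lemma \ref{lem:real analyticity of B} together with uniform control of the tail of the series for $s \geq 3$, since the eigenvalues are bounded away from $0$ and bounded above uniformly in $\nu$.

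The parity statement \eqref{eqn:B2} is where I expect the main subtlety. The point is that the coefficient polynomials $c_k(u)$ are not arbitrary polynomials of degree $k$ — they have a definite parity. One sees this from the functional equation: $f(r,u) + u = f(r,-u)$, equivalently $g(r,u) = f(r,-u)$, combined with the symmetry $r(1/s) = \dots$ — more usefully, I would exploit that $f(r,u) + \tfrac u2$ is related to the generating function $\frac{u}{2}\coth(\tfrac u2 \ln s)$-type object under an appropriate substitution, whose expansion in $1/\ln$ or in $1/s$ has alternating parity (like $\coth$, which is odd). Concretely: $f(r,u) = \frac{u}{r^{-u}-1} = \frac{u}{2}\bigl(\coth(\tfrac{u}{2}\log\tfrac1r) - 1\bigr)$... but $\log\frac1r = \log\frac{s+1}{s-1} = 2/s + 2/(3s^3) + \cdots$ is an \emph{odd} function of $1/s$. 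Since $\coth$ is odd, $\coth(\tfrac u2 \log\tfrac1r)$ is an odd function of the product, and expanding it in powers of $1/s$ produces only terms $u^{2a+1}/s^{2b+1}$-type combined appropriately; the upshot is that in the coefficient of $s^{2n-\ell-2}$, a monomial $\prod \mu_{j_i}^\nu$ has its number of factors congruent to $\ell \pmod 2$, i.e. it is $\mu^{\ell - e}$ with $e$ even. After applying \eqref{eqn:key}, a monomial in the eigenvalues of total degree $d$ becomes a polynomial in $\nu$ of degree $d$ (since $A_\nu$ depends linearly on $\nu$ and \eqref{eqn:key} is homogeneous of degree $d$ in $A_\nu$), so degree-$(\ell-e)$ eigenvalue monomials give degree-$(\ell-e)$ monomials in $\nu$, yielding exactly \eqref{eqn:B2}. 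I would double-check the parity bookkeeping on the low-order terms ($\ell = 0,1,2$) to make sure the claim is stated with the correct parity convention.

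Part 2 follows the same template applied to the matrix $r^{-\bar A_\nu}$ rather than the scalar. Write $r(s)^{-\bar A_\nu} = \exp(-\bar A_\nu \log r(s)) = \exp(\bar A_\nu \log\tfrac{1}{r(s)})$, and substitute the odd-in-$1/s$ expansion $\log\tfrac{1}{r(s)} = 2/s + 2/(3s^3) + \cdots =: \psi(1/s)$. Then $r(s)^{-\bar A_\nu} = \sum_{k\geq 0} \frac{1}{k!}\bar A_\nu^k \psi(1/s)^k = I + \bar A_\nu(2/s) + \cdots$, so the entry $(r(s)^{-\bar A_\nu})_{ij}$ is $\delta_{ij}$ plus a series $\sum_{\ell\geq 1} s^{-\ell}\cdot(\text{entry of a polynomial in }\bar A_\nu\text{ of degree} \leq \ell)$; since $\psi$ is odd, the polynomial appearing with $s^{-\ell}$ is a combination of $\bar A_\nu^{\ell}, \bar A_\nu^{\ell-2}, \bar A_\nu^{\ell-4}, \ldots$, i.e. degrees $\equiv \ell \pmod 2$, hence degrees $\ell - e'$ with $e'$ even. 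Taking the $q\times q$ minor $\det([r(s)^{-\bar A_\nu}]_{K,J})$ is a polynomial in these entries; expanding and collecting powers of $1/s$, the coefficient of $s^{-\ell'}$ is a polynomial in the entries of $\bar A_\nu$ of total degree $\ell'$ with the same parity constraint (a product of entries contributing $s^{-\ell'_1}, \ldots, s^{-\ell'_q}$ with $\sum \ell'_i = \ell'$ has total $\bar A_\nu$-degree $\equiv \sum \ell'_i = \ell' \pmod 2$, since each factor respects the parity). As $\bar A_\nu$ is linear in $\nu$, this is a polynomial in $\nu$ of degree $\ell'$ with the stated parity, giving \eqref{eqn:nu decomp of r^-A}; the lower bound $\ell' \geq 1$ is because the $\ell' = 0$ term is $\delta_{KJ}$ and contributes no singularity — when $K = J$ the constant term $1$ is split off and handled separately, or absorbed, as in \eqref{eqBL}. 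The main obstacle throughout is making the parity argument airtight, i.e. verifying that the \emph{only} structural input needed is the oddness of $\log\frac{1}{r(s)}$ as a function of $1/s$, and that no accidental cancellation or extra term spoils the $\mu^{\ell-e}$ / $\nu^{\ell'-e'}$ pattern; once that is isolated the rest is routine multilinear algebra and series manipulation.
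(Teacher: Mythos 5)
Your proposal is correct, and its skeleton is the paper's: the same change of variables \eqref{eqn:change}, expansion about $s=\infty$, parity bookkeeping in the coefficients, and passage from eigenvalue data to $\nu$ via the linearity of $\nu\mapsto A_\nu$. The two pivotal steps, however, are done by different devices, and the comparison is worth recording. For the parity you use the identity $f(r,u)+\tfrac u2=\tfrac u2\coth\bigl(\tfrac u2\log\tfrac 1r\bigr)$ together with the oddness of $\log\tfrac{1}{r(s)}$ as a series in $1/s$; the paper instead proves evenness of $\tilde F(w,u)$ in $w$ and in $u$ (Lemma \ref{lem: tilde F real analytic}) by direct manipulation of \eqref{eqn:exponential:eq}. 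Your route makes the parity structurally transparent, while the paper's computation also delivers the explicit low-order terms \eqref{eqn:F} that are reused in Section \ref{sec:A expansion, large r}. For converting the symmetric functions of the eigenvalues into polynomials in $\nu$, the paper passes through the elementary symmetric functions, identified via the characteristic polynomial \eqref{symmetric:eq} as homogeneous degree-$\ell$ polynomials in $\nu$, and invokes the basis theorem for the $E^L$; you instead go through ``polynomials in the entries of $A_\nu$.'' That works, but your citation of \eqref{eqn:key} for it is off: \eqref{eqn:key} is a quadratic-form identity weighted by $|Z_j(\nu,z)|^2$, not a trace identity; what you actually need is $\sum_j P(\mu_j^\nu)=\Tr P(A_\nu)$ (or the characteristic-polynomial coefficients), after which Newton's identities give all symmetric polynomials --- a loose reference rather than a gap, but keep in mind that individual eigenvalue monomials are not polynomials (or even smooth functions) of $\nu$, so only the symmetric combinations may be converted, which is exactly why this step exists. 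For Part 2 you expand the matrix exponential $\exp\bigl(\bar A_\nu\log\tfrac{1}{r(s)}\bigr)$ directly, whereas the paper expands the scalar $r(s)^{-u}$ and substitutes $u\mapsto A_\nu$; these are equivalent, and your explicit treatment of the $K=J$ constant term (the $\ell'=0$ contribution, absorbed as in \eqref{eqBL}) is if anything more careful than the paper's. Two cosmetic slips: the coefficient $c_k(u)$ has degree at most $k$ and vanishes for odd $k\geq 3$, so ``degree exactly $k$'' overstates it (your later parity bookkeeping is what you actually use); and the boundedness of the remainder should be asserted on the range actually used, $s\geq 3$, with uniformity in $\nu$ coming from the eigenvalues lying in a fixed compact set away from $0$.
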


To start the proof of Proposition \ref{prop: structure of non-z terms in terms of nu}, let
\[
F(s,u)=f(r(s),u), \ \  G(s,u) = g(r(s), u) .
\]
Using (\ref{eqn:Br2}), we obtain
\begin{equation}
\label{eqn:B-def}
\frac{B(r(s), \nu) r'(s)}{r(s)} = 2 \prod_{j=1}^{2n} F(s, \mu_j^{\nu}) \frac{1}{ (s^2-1)}.
\end{equation}

We will need to Taylor expand $B(r(s), \nu)$ in $s$ about $s=\infty$, which is equivalent to letting  $s=1/w$ and expanding about $w=0$. To this end, let
\begin{equation}
\tilde F(w, u) = w[ F(1/w, u) +u/2]= w \left[ g \left(\frac{1-w}{1+w}, u \right) - \frac{u}{2} \right]
\label{eqn:F(w,u) def}
\end{equation}

\begin{lem}
\label{lem: tilde F real analytic}
 $\tilde F(w,u) $ is a real analytic function of $w$ and $u$ for $-1 < w < 1 $ and $u \in \R$. In addition,
\begin{enumerate}[1.]

\item For each fixed $u$, the function $w \to \tilde F(w,u)$ is an even function of $w$;

\item For each fixed $w$, the function $u \to \tilde F(w,u)$ is an even function of $u$;

\item The coefficients in the Taylor series expansions of  $ \tilde F(w,u)$  in $w$ 
about $w=0$ are of the form:
\[
\textrm{$j$th coefficient} =  \left\{
\begin{array}{cc}
0 & \textrm{if $j$ is odd} \\
P_j(u) & \textrm{if $j$ is even} 
\end{array}
\right.
\]
where $P_j(u)$ is a polynomial of degree $j$ in $u$ that involves only even powers of $u$.

\end{enumerate}
\end{lem}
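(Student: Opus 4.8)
\textbf{Proof proposal for Lemma \ref{lem: tilde F real analytic}.}

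The plan is to work directly from the explicit formula \eqref{eqn:F(w,u) def}, namely $\tilde F(w,u) = w\left[ g\left(\frac{1-w}{1+w},u\right) - \frac u2\right]$ with $g(r,u) = \frac{u}{1-r^u}$. First I would substitute $r = r(s) = \frac{s-1}{s+1}$ with $s = 1/w$, so that $r^u = \left(\frac{1-w}{1+w}\right)^u = e^{u\ln\frac{1-w}{1+w}}$, and note that $h(w) := \ln\frac{1-w}{1+w} = -2(w + w^3/3 + w^5/5 + \cdots)$ is real analytic and \emph{odd} on $(-1,1)$, vanishing to first order at $w=0$. Hence $g = \frac{u}{1-e^{uh(w)}}$; the denominator $1-e^{uh(w)}$ vanishes to first order at $w=0$ (its derivative there is $-u h'(0) = 2u \neq 0$ for $u\neq 0$), which is exactly why multiplying by $w$ — or equivalently subtracting off the $u/2$ leading term — produces something regular. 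Concretely, $\frac{1}{1-e^{uh(w)}} = -\frac{1}{uh(w)}\cdot\frac{uh(w)}{e^{uh(w)}-1}$, and $\frac{x}{e^x-1}$ is the entire generating function for Bernoulli numbers $\sum_{k\geq 0} B_k x^k/k!$. Therefore $g\left(\frac{1-w}{1+w},u\right) = -\frac{1}{h(w)}\sum_{k\geq 0}\frac{B_k}{k!}(uh(w))^k = -\frac{1}{h(w)} - \frac u2 + \sum_{k\geq 2}\frac{B_k}{k!}u^k h(w)^{k-1}$, using $B_1 = -1/2$. Subtracting $u/2$ and multiplying by $w$ gives $\tilde F(w,u) = -\frac{w}{h(w)} - wu + w\sum_{k\geq 2}\frac{B_k}{k!}u^k h(w)^{k-1}$. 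Wait — I should recheck the cancellation: the term $-wu$ looks problematic, but in fact $g - \frac u2 = -\frac1{h(w)} - u + \sum_{k\geq 2}(\cdots)$, so $\tilde F = -\frac{w}{h(w)} - wu + w\sum_{k\geq 2}\frac{B_k}{k!}u^k h(w)^{k-1}$; the middle term $-wu$ is odd in $w$ and odd in $u$, which contradicts claims (1)–(2), so I must be mis-expanding. The correct statement is $g(r,u) = \frac{u}{1-r^u} = f(r,-u) = \frac{-u r^{-u}}{1-r^{-u}}$; one should double-check which leading term is being subtracted so that the $k=1$ Bernoulli term is exactly cancelled. Resolving this bookkeeping is the one genuinely delicate point, but it is purely a computation with $\frac{x}{e^x-1}$.

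Once the correct closed form $\tilde F(w,u) = -\frac{w}{h(w)} + \sum_{k\geq 2}\frac{B_k}{k!}u^k w\, h(w)^{k-1}$ (with the $k=1$ term having cancelled) is in hand, all three conclusions follow structurally. Real analyticity: $h(w)/w$ extends to a nonvanishing real analytic function near $w=0$ (since $h$ vanishes simply at $0$), so $w/h(w)$ is real analytic on $(-1,1)$; since $B_k = 0$ for odd $k\geq 3$, the surviving sum runs over even $k\geq 2$, and $w\,h(w)^{k-1}$ with $k-1$ odd is a real analytic even function of $w$ — each summand is a polynomial in $u$ times a real analytic function of $w$, and the series converges locally uniformly because $\frac x{e^x-1}$ is entire. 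So $\tilde F$ is real analytic in $(w,u)\in(-1,1)\times\R$.

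For the parity statements: claim (2), evenness in $u$, follows because the only $u$-dependence is through the even-$k$ terms $u^k$ (the $k=1$ odd term having been removed by design), so $\tilde F(w,-u) = \tilde F(w,u)$. Claim (1), evenness in $w$, follows because $h(w)$ is odd, hence $w/h(w)$ is even and $w\,h(w)^{k-1}$ is even whenever $k-1$ is odd, i.e. $k$ even — which is precisely the surviving range; so every term is even in $w$. Claim (3) is then immediate: evenness in $w$ kills all odd-index Taylor coefficients; and the even-index coefficient $P_j(u)$ is obtained by reading off the coefficient of $w^j$ in $-\frac w{h(w)}$ (a constant, contributing to $P_j$'s degree-$0$ part) plus the coefficients of $w^j$ in the finitely many terms $\frac{B_k}{k!}u^k\, w\,h(w)^{k-1}$ with $k\leq j+1$; since $w\,h(w)^{k-1}$ starts at order $w^k$, only $k\leq j$ contribute, and the coefficient of $u^k$ there is a real number — so $P_j(u) = \sum_{k \text{ even},\, 0\le k\le j} c_{j,k} u^k$ is a polynomial of degree at most $j$ in even powers of $u$. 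That the degree is exactly $j$ comes from the $k=j$ term, whose contribution to the coefficient of $w^j$ is $\frac{B_j}{j!}u^j$ (the leading coefficient of $w\,h(w)^{j-1}$ in $w$ is $(-2)^{j-1}\neq 0$), and $B_j\neq 0$ for even $j$. The main obstacle, as flagged, is getting the $k=1$ Bernoulli cancellation bookkeeping exactly right so that no stray odd-in-$u$, odd-in-$w$ term survives; everything after that is a formal consequence of the oddness of $h$ and the vanishing of odd Bernoulli numbers.
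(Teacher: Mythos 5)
Your route is essentially the paper's (the paper also expands $\tilde F(w,u)=\frac{wu}{1-e^{u\ln\frac{1-w}{1+w}}}-\frac{wu}{2}$ and reads off the structure of the Taylor coefficients; your Bernoulli-number bookkeeping is a cleaner way to get Part (3), even pinning the exact degree $j$ via $B_j\neq 0$ for $j$ even), but as written the proposal has two concrete defects. First, the step you yourself flag as ``the one genuinely delicate point'' is left unresolved: your first expansion has a sign slip (with $B_1=-\tfrac12$, the $k=1$ term of $-\frac1{h}\sum_{k\ge0}\frac{B_k}{k!}(uh)^k$ is $-B_1u=+\frac u2$, not $-\frac u2$), and instead of fixing it you assert the ``correct closed form.'' The fix is one line: $g\big(\tfrac{1-w}{1+w},u\big)=-\frac1{h(w)}+\frac u2-\sum_{k\ge2}\frac{B_k}{k!}u^k h(w)^{k-1}$, so the subtracted $\frac u2$ cancels the $k=1$ term exactly and
\[
\tilde F(w,u)=-\frac{w}{h(w)}-\sum_{k\ge 2}\frac{B_k}{k!}\,u^k\,w\,h(w)^{k-1},
\]
i.e.\ your displayed closed form also has the wrong sign on the sum (harmless for the conclusions, but it should be verified, not asserted, since the whole point of the lemma is this cancellation).

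Second, the justification of convergence and of real analyticity on all of $(-1,1)\times\R$ is wrong: $\frac{x}{e^x-1}$ is \emph{not} entire (it has poles at $2\pi i k$, $k\neq0$), so the Bernoulli series represents $\tilde F$ only where $|u\,h(w)|<2\pi$; for large $|u|$, or $w$ near $\pm1$ where $h(w)\to\mp\infty$, the series diverges. To get analyticity on the full strip, argue as the paper does (composition rather than series): write $\tilde F(w,u)=-\frac{w}{h(w)}\,\Phi\big(u\,h(w)\big)-\frac{wu}{2}$ with $\Phi(x)=\frac{x}{e^x-1}$, note $\Phi$ is real analytic on $\R$ (no real poles), $h(w)/w$ is real analytic and nonvanishing on $(-1,1)$, and conclude by composition; the parity statements (1)--(2) then either follow directly ($h$ odd and $\Phi(-x)=\Phi(x)+x$ give evenness in $w$; $g(r,-u)+\frac u2=g(r,u)-\frac u2$ gives evenness in $u$, which is how the paper gets Part (2)), or follow from your series on the neighborhood $|u\,h(w)|<2\pi$ of $\{w=0\}$ and extend to all of $(-1,1)$ by the identity theorem. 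With those two repairs your argument for Part (3) — oddness of $h$ plus $B_k=0$ for odd $k\ge3$, and only $k\le j$ contributing to the coefficient of $w^j$ — is correct and somewhat sharper than the paper's, which instead infers the form of the coefficients from repeated $w$-differentiation combined with Parts (1)--(2).
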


\begin{proof} We have
\begin{align}
\tilde F(w,u) &= \frac{wu}{1-\left( \frac{1-w}{1+w} \right)^u} -\frac{wu}{2} \nn\\
\label{eqn:exponential:eq}
&=  \frac{wu}{1-e^{u \ln \left( \frac{1-w}{1+w} \right)}}  -\frac{wu}{2}.
\end{align}
Since $1-e^z$ vanishes to first order in $z$ at the origin, the $(u,w)$ power series expansion of the denominator has a factor of $uw$, which cancels with the $uw$ in the numerator. The resulting term is analytic and nonvanishing in a neighborhood of the origin.
Hence $\tilde F$ is real analytic. Part (2) follows easily from \eqref{eqn:F(w,u) def}.
Parts (1) follows by a calculation (Maple helps). 
For Part (3), we expand the exponential term appearing in (\ref{eqn:exponential:eq}) and cancel the common factor of $uw$
to obtain
\[
\tilde F(w,u) = \left[
\frac{1}{L(w) + \frac{uw}{2!} L(w)^2 + \frac{(uw)^2}{3!} L(w)^3 + \dots} \right] -\frac{wu}{2}
\]
where $L(w) =w^{-1} \ln\left( \frac{1-w}{1+w} \right)$ is analytic on $-1 < w < 1$. From repeated $w$-differentiations of $\tilde F$, one can see that the $j$th $w$-derivative of $\tilde F$ at $w=0$ is a polynomial expression in $u$ of degree $j$. In view of Part (2), this expression is zero if $j$ is odd and only involves even powers of $u$ when $j$ is even as stated in Part (3). This concludes the proof of 
the lemma.
\end{proof}

We  let $w=1/s$ and unravel this lemma to imply the following expansions for 
$F(s, u)$.
\begin{align}
\label{eqn:F}
F(s,u)&=
\frac{s}{2} - \frac{u}{2} +\frac{u^2-1}{6s} - \frac{u^4 -5u^2 +4}{90s^3} + \sum_{j=3}^\infty
\frac{p_{2j} (u)}{s^{2j-1}} 
\end{align}
where $p_{2j}(u) $ is a polynomial in $u$ of degree $2j$ with only even powers of $u$. The above series converges uniformly on any closed subset of $\{ s >1 \}$. Note that $F$ has the linear term  $u/2$  and that all other terms involve only even powers of $u$. \m

Our next task is to use (\ref{eqn:F}) to expand the expression $B(r(s), \nu) $ given in (\ref{eqn:B-def})
in powers of $1/s$ (about $s=\infty$). To get started, here are the first few terms (in order of decreasing powers of $s$):
\begin{align}
\frac{B(r(s), \nu) r'(s)}{r(s)} &= \frac{2}{(s^2 -1)}\prod_{j=1}^{2n} F(s, \mu^\nu_j)\\
\label{eqn:B expansion}
&=\frac{2s^{2n}}{2^{2n}(s^2 -1)} \prod_{j=1}^{2n} \left[ 1 - \frac{\mu_j^\nu}{s} + \frac{(\mu_j^\nu)^2 -1}{3s^2} + \sum_{k=2}^\infty \frac{p_{2k} (\mu_j^\nu)}{s^{2k}} \right]
\end{align}
where $p_{2k} (u)$ is a polynomial of degree $2k$ with only even powers of $u$.
\m

Now, we expand the product on the right (denoted by Product) in terms of symmetric polynomials in the variables $\mu_1^\nu, \dots, \mu_{2n}^\nu$. First, a definition.

\begin{defn} A {\em symmetric polynomial} of degree $m$ on $\R^N$ is a polynomial $P$ of degree $m$ in the variables $(u_1, \dots u_N) \in \R^N$ such that $P(u_1, \dots u_N)  = P(u_{\sigma(1)}, \dots u_{\sigma(N)}) $ for all permutations $\sigma$ on $\{ 1, 2, \dots, N\}$.

An \emph{allowable} multiindex $\alpha = ( \alpha_1 , \dots , \alpha_N\big)$ is a nonincreasing $N$-tuple of nonnegative integers, that is,
integers $\alpha_j$, $1 \leq j \leq N$, satisfying
$\alpha_1 \geq \alpha_2 \geq \cdots \geq \alpha_N \geq 0$. Let $|\alpha|= \alpha_1 + \dots + \alpha_N$ and define
\[
S^\alpha (u_1, \dots , u_N) = \sum'_{i_1, \dots, i_N} u_{i_1}^{\alpha_1} \dots
u_{i_N}^{\alpha_N}
\]
where the sum is taken over all {\em distinct} indices $i_1, \dots, i_N$ each ranging from 1 to $N$.
\end{defn}

Note the prime over the sum emphasizes that the indices $i_j$ are distinct. 
Also for clarity, if the $2n$-tuple $\alpha$ ends with multiple zeros, we stop writing after the first zero. For example, we write
$S^{1,0}(\mu_1^\nu,\dots,\mu_{2n}^\nu)$ for $S^{1,0,\dots,0} (\mu_1^\nu,\dots,\mu_{2n}^\nu)$.
Clearly each
$S^\alpha (u)$ is a symmetric polynomial of degree $|\alpha|$. For a fixed $m>0$, the collection of  $S^\alpha (u)$ over all allowable multiindices $\alpha$ with $|\alpha| =m$
forms a basis of the space of symmetric polynomials of degree $m$ on $\R^N$.

From an examination of the product in (\ref{eqn:B expansion}) and using the fact that 
$p_{2k} (u)$ is a polynomial of degree $2k$ with only even powers of $u$, we obtain the 
following lemma.

\begin{lem} 
\label{lem: first expansion}
For $\ell \geq 0$, the coefficient of $\frac{1}{s^\ell}$ in the Product on the right side of (\ref{eqn:B expansion}) is a linear combination of 
\[
S^{\alpha} (\mu_1^\nu, \dots, \mu_{2n}^\nu), \ \ \textrm{with $|\alpha| = \ell, \ \ell -2, \  \ell -4, \dots , \ell -e$}
\]
where $e$ is the largest even integer which is less than or equal to $\ell$.
\end{lem}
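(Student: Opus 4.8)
The plan is to expand the product in \eqref{eqn:B expansion} factor by factor and track which symmetric polynomials in $\mu_1^\nu,\dots,\mu_{2n}^\nu$ can appear as the coefficient of $1/s^\ell$. Write the $j$th factor as $1 + a_1(\mu_j^\nu)/s + a_2(\mu_j^\nu)/s^2 + \cdots$ where $a_1(u) = -u$ has degree $1$ and, crucially, $a_k(u)$ for $k\geq 2$ is a polynomial of degree $\leq k$ containing \emph{only even powers} of $u$ (this is the content of \eqref{eqn:F}, which feeds into \eqref{eqn:B expansion}). Expanding the product of the $2n$ factors, the coefficient of $1/s^\ell$ is a sum over ways of choosing, for each $j$, a term $a_{k_j}(\mu_j^\nu)/s^{k_j}$ with $\sum_j k_j = \ell$; the resulting contribution is $\prod_j a_{k_j}(\mu_j^\nu)$, a polynomial in $\mu_1^\nu,\dots,\mu_{2n}^\nu$ which is manifestly symmetric \emph{after summing over all assignments of the exponents $k_j$ to the indices $j$}.

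The key step is to control the degree and parity. Each factor $a_{k_j}$ contributes degree at most $k_j$ in its variable, so the total degree is at most $\sum_j k_j = \ell$; hence only $S^\alpha$ with $|\alpha| \leq \ell$ occur. For the parity: among the chosen factors, those with $k_j = 1$ contribute the linear term $-\mu_j^\nu$ (odd degree $1$), and those with $k_j \geq 2$ contribute only even powers. If $p$ factors are chosen with $k_j = 1$, then $p \leq \ell$ and in fact $p \equiv \ell \pmod 2$ is \emph{not} forced — but the total degree of $\prod_j a_{k_j}$ has the same parity as $p$ (the $k_j\geq 2$ factors being even-degree-only), and separately we need $|\alpha|$ to have the same parity as $\ell$. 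Here is the point: $\sum_{k_j\geq 2} k_j = \ell - p$, and this is the total ``budget'' for the even-power factors; the degree of $\prod_{k_j\geq 2} a_{k_j}$ is an even number $\leq \ell - p$. Adding the $p$ from the linear factors, the total degree lies in $\{p, p+2, \dots\} \cap [0,\ell]$ and, since $\ell - p$ is the even-factor budget, the maximum is $p + (\ell-p) = \ell$ when $\ell - p$ is even, i.e. when $p \equiv \ell \pmod 2$, and $\ell - 1$ otherwise. Combining both cases, every achievable total degree is congruent to $\ell$ mod $2$, so only $S^\alpha$ with $|\alpha| \in \{\ell, \ell-2, \dots, \ell-e\}$ appear, where $e$ is the largest even integer $\leq \ell$.

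Finally, I would note that any symmetric polynomial of degree $d$ in the $2n$ variables $\mu_j^\nu$ is a linear combination of the basis elements $S^\alpha$ with $|\alpha| = d$ (as stated after the definition), and that the homogeneous-degree-$d$ part of $\prod_j a_{k_j}(\mu_j^\nu)$, once summed over all index assignments, is genuinely symmetric, so it lies in the span of $S^\alpha$, $|\alpha|=d$. Assembling the pieces over all $d \in \{\ell,\ell-2,\dots,\ell-e\}$ gives the claimed form of the coefficient of $1/s^\ell$. The main obstacle is the bookkeeping in the parity argument — making precise that the linear term is the \emph{only} odd-degree ingredient and that it appears a number of times with the right parity relative to $\ell$ — but this is exactly what \eqref{eqn:F} was set up to guarantee, so no genuine difficulty arises beyond careful counting.
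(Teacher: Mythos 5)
There is a genuine gap in your parity step, and it traces back to how you set up the factors. You record only that $a_1(u)=-u$ and that $a_k(u)$ for $k\geq 2$ has degree $\leq k$ with even powers of $u$; but the expansion \eqref{eqn:F} gives strictly more: after factoring out $s/2$, each bracket in \eqref{eqn:B expansion} is
\[
1-\frac{\mu_j^\nu}{s}+\sum_{k\geq 1}\frac{p_{2k}(\mu_j^\nu)}{s^{2k}},
\]
i.e.\ the \emph{only} term with odd $s$-exponent is the linear one at $1/s$; all other nonzero terms sit at even exponents $s^{-2k}$ (equivalently, $a_k\equiv 0$ for odd $k\geq 3$). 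Under your weaker description the lemma is simply false: if, say, $a_3(u)=u^2$ were permitted (degree $\leq 3$, only even powers, so allowed by your hypotheses), then for $\ell=3$ the choice of one such factor and $1$'s elsewhere contributes $S^{2,0}(\mu^\nu)$, of degree $2\notin\{3,1\}$. So no argument from your stated premises alone can close the parity claim.

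Concretely, the failure appears where you write that $p\equiv\ell\pmod 2$ ``is not forced'' and then, after computing that in the case $p\not\equiv\ell\pmod 2$ the maximal total degree is $\ell-1$, conclude that ``combining both cases, every achievable total degree is congruent to $\ell$ mod $2$.'' That inference is a non sequitur: in the second case all achievable degrees are congruent to $p$, hence of the \emph{wrong} parity, and degrees up to $\ell-1$ would genuinely occur unless those terms vanish identically. They do vanish, but only because of the fact you never invoke: since every non-linear term carries an even $s$-exponent, the exponent bookkeeping $\ell=p+\sum_{k_j\geq 2}k_j$ with each $k_j\geq 2$ even forces $\ell-p$ to be even, hence $p\equiv\ell\pmod 2$, and then total degree $=p+(\text{even})\leq\ell$ with the correct parity. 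With that one structural fact from \eqref{eqn:F} inserted, the rest of your bookkeeping (symmetry after summing over index assignments, decomposition into homogeneous parts spanned by the $S^\alpha$ with $|\alpha|=d$) goes through and matches the examination the paper intends.
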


As an illustration of this lemma, we write out the first few terms of the Product 
on the right side of (\ref{eqn:B expansion})
\begin{align*}
\textrm{Product} &=  1 - s^{-1}\sum_{k=1}^{2n} \mu^\nu_k + s^{-2}
\left((1/3) \sum_{k=1}^{2n}[ (\mu_k^\nu)^2 -1 ]  
+\sum_{j \not=k} \mu_j^\nu \mu_k^\nu \right) + \dots  \\
&= 1-\frac{1}{s} S^{1,0} (\mu^\nu)+ \frac{1}{s^2} \Big( (1/3) (S^{2,0} (\mu^\nu) -2n) + S^{1,1,0} (\mu^\nu) \Big) + \dots .
\end{align*}

\m

Now we need transform the $S^\alpha (\mu^\nu)$ into a more useful basis involving elementary symmetric functions.

\begin{defn} For $0 \leq \ell \leq N$, the elementary symmetric function of degree $\ell$ in $\R^N$ is
\begin{equation}
\label{def:elem}
E_\ell (u) = \sum_{(j_1,\dots,j_\ell)\in\I_\ell}  u_{j_1} \cdots  u_{j_\ell}
 .
\end{equation}
\end{defn}

With $N=2n$, the key fact about the $E_\ell(\mu^\nu)$ is that they appear as coefficients in the characteristic polynomial for $A_\nu$:
\begin{equation}
\label{symmetric:eq}
\det (A_\nu - \lambda I) =\lambda^{2n} + \sum_{\ell=1}^{2n} (-1)^\ell E_\ell (\mu^\nu) \lambda^{2n-\ell} .
\end{equation}

Note that each row of $A_\nu$ depends linearly and homogeneously on $\nu$ 
and thus the coefficient of $\lambda^{2n- \ell}$, i.e., 
$E_\ell (\mu^\nu)$, is a homogenous polynomial of degree $\ell$  in the coordinates of $\nu = (\nu_1, \dots, \nu_m) \in S^{m-1}$.  
As a consequence, we have

\begin{lem}
$E_\ell (\mu^\nu)$, is a homogenous polynomial of degree $\ell$  in the coordinates of $\nu = (\nu_1, \dots, \nu_m) \in S^{m-1}$.
\end{lem}

In particular, $E_\ell (\mu^\nu)$ is analytic in $\nu$ even though the eigenvalues $\mu_j^\nu$ are not necessarily differentiable in $\nu$.

\begin{defn}
Suppose $L= (\ell_1, \dots, \ell_j, \dots )$ is a multiindex (of indeterminate length) with 
$\ell_j \geq \ell_{j+1} $ and only a finite number of the $\ell_j$ are nonzero. For
$u=(u_1, \dots , u_N)$, define
\[
E^L(u) = E_{\ell_1} (u) \cdot E_{\ell_2} (u) \cdots E_{\ell_N} (u), \ \ 
\]
$E^L(u)$ is a symmetric polynomial of degree $|L| = \ell_1 + \dots + \ell_j + \dots$
\end{defn}

The next theorem is \cite[Theorem 7.4.4]{Sta99}. 

\begin{thm} 

For a given integer, $m \geq 1$, the collection of 
\[
\{E^L(u); \ |L|=m; \ u \in \R^N \}
\]
 is a basis for
the space of symmetric polynomials of degree $m$ on $\R^N$.

\end{thm}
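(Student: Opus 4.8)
The plan is to deduce the theorem from the fundamental theorem of symmetric polynomials, after which everything is bookkeeping. Recall that the ring $\Lambda_N$ of symmetric polynomials in $u=(u_1,\dots,u_N)$ over $\R$ is a polynomial ring $\R[E_1,\dots,E_N]$: the elementary symmetric functions $E_1,\dots,E_N$ generate $\Lambda_N$ as an $\R$-algebra and are algebraically independent. If one prefers a self-contained argument to a citation, the generation half is the classical induction on the lexicographically leading monomial of a symmetric polynomial --- subtract off a suitable monomial in the $E_\ell$ to strictly decrease the leading term --- and algebraic independence follows either by comparing the Hilbert series of $\R[E_1,\dots,E_N]$ with that of $\Lambda_N$, or by a second short induction; this is exactly the content one would otherwise take from \cite{Sta99}.

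Granting this, grade $\Lambda_N$ by total degree in $u$, so that $E_\ell$ is homogeneous of degree $\ell$. Then $\{E_1^{a_1}\cdots E_N^{a_N}: a_j\in\N_0\}$ is an $\R$-basis of $\Lambda_N$, and $E_1^{a_1}\cdots E_N^{a_N}$ is homogeneous of degree $\sum_{j=1}^N j\,a_j$; hence the degree-$m$ homogeneous component has $\R$-basis
\[
\Big\{\, E_1^{a_1}\cdots E_N^{a_N} \ :\ a_j\in\N_0,\ \ \textstyle\sum_{j=1}^N j\,a_j = m \,\Big\}.
\]
It remains to match this with the index set in the statement. To a multiindex $L=(\ell_1\ge\ell_2\ge\cdots)$ with $|L|=m$ and all parts $\le N$, associate the exponent vector $a_j=\#\{i:\ell_i=j\}$; then $\sum_j j\,a_j=\sum_i \ell_i = m$ and, directly from the definition of $E^L$ (with $E_0=1$ absorbing the trailing zeros), $E^L=\prod_{j=1}^N E_j^{a_j}$. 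This correspondence between such multiindices $L$ and degree-$m$ monomials in $E_1,\dots,E_N$ is a bijection, so the nonzero $E^L$ with $|L|=m$ are exactly the displayed basis. The only wrinkle is that the statement formally permits parts $\ell_i>N$; but $E_\ell\equiv 0$ for $\ell>N$, so such $E^L$ vanish and are discarded, leaving precisely the basis above.

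The step that carries the weight is the first one, the fundamental theorem of symmetric polynomials, if the proof is to be fully self-contained rather than quoted from \cite{Sta99}; everything after it is combinatorial bookkeeping. An alternative that sidesteps that theorem is to take the monomial symmetric polynomials $m_\lambda$, over partitions $\lambda\vdash m$ with $\ell(\lambda)\le N$, as the reference basis of the degree-$m$ component and to check that the matrix expressing the $E^L$ in the $m_\lambda$ is triangular with respect to the dominance order on partitions, with $1$'s on the diagonal after the standard conjugation $\lambda\mapsto\lambda'$; invertibility of the transition matrix then yields the result. In that route the triangularity computation is the main obstacle, but it amounts to a short count of $\{0,1\}$-matrices with prescribed row and column sums.
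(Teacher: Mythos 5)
Your proof is correct, but note that the paper does not actually prove this statement: it is quoted directly as \cite[Theorem 7.4.4]{Sta99}, so what you have written is a proof of the cited result rather than a variant of an argument in the paper. Your main route --- the fundamental theorem of symmetric polynomials (the $E_\ell$ generate the ring of symmetric polynomials and are algebraically independent), then passing to the homogeneous degree-$m$ component and matching the index set via $a_j=\#\{i:\ell_i=j\}$, $E^L=\prod_j E_j^{a_j}$ --- is sound, and your treatment of the two wrinkles (parts $\ell_i>N$, for which $E_{\ell_i}\equiv 0$ and the corresponding $E^L$ must be discarded, and trailing zeros absorbed by $E_0=1$) is exactly the bookkeeping needed to reconcile the stated index set with the monomial basis of $\R[E_1,\dots,E_N]$ in degree $m$. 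Your alternative route, unitriangularity of the expansion of the $E^L$ in the monomial symmetric polynomials with respect to dominance order after conjugation, is essentially Stanley's own proof of Theorem 7.4.4, so that option reconstructs the argument behind the paper's citation; the first route instead leans on the classical fundamental theorem, which is arguably the more standard thing to quote or reprove. One interpretive point worth making explicit: the paper's definition of ``symmetric polynomial of degree $m$'' is not literally restricted to homogeneous polynomials, but the statement is only true for the homogeneous degree-$m$ component, which is how the paper uses it (e.g.\ in Corollary \ref{cor:B-expansion in nu}), and your proof correctly addresses that component.
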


The following corollary follows from this theorem and Lemma \ref{lem: first expansion}.

\begin{cor}
\label{cor:B-expansion in nu}
In the expansion of $B(r(s), \nu)\frac{r'(s)}{r(s)}$ given in (\ref{eqn:B expansion}), the coefficient of $s^{2n - 2- \ell}$ 
is expressible as a linear combination of 
\[
E^L(\mu^\nu) =  E_k(\mu^\nu)^{n_k} \cdots  E_2(\mu^\nu)^{n_2} E_1(\mu^\nu)^{n_1} \dots , \ \ k \geq 1
\]
where $L=(n_k, \dots, n_1)$ with 
 $|L|=n_1 + 2 n_2 + \dots k n_k = \ell -e$, where $e$ is an even integer with $0 \leq e \leq \ell$. Moreover, this coefficient is a linear combination of monomials
  in the components of $\nu =(\nu_1, \dots , \nu_m) \in S^{m-1}$ each having degree $\ell - e$.
  
\end{cor}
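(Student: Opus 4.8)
The plan is to derive Corollary~\ref{cor:B-expansion in nu} by chaining together the three structural facts that precede it. First I would take the expansion of $\frac{B(r(s),\nu)r'(s)}{r(s)}$ recorded in \eqref{eqn:B expansion}, namely
\[
\frac{B(r(s), \nu) r'(s)}{r(s)} = \frac{2s^{2n}}{2^{2n}(s^2 -1)} \cdot \textrm{Product},
\]
and observe that, since $\frac{1}{s^2-1} = s^{-2}(1 + s^{-2} + \cdots)$ only contributes even powers of $1/s$, the coefficient of $s^{2n-2-\ell}$ in the whole expression is (up to the harmless geometric-series tail, which shifts $\ell$ by even amounts) a linear combination of the coefficients of $s^{-\ell}$, $s^{-\ell+2}$, $\dots$ in the Product. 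By Lemma~\ref{lem: first expansion}, the coefficient of $s^{-\ell'}$ in the Product is a linear combination of $S^\alpha(\mu^\nu)$ with $|\alpha| = \ell', \ell'-2, \dots$; combining with the even shifts just mentioned, the coefficient of $s^{2n-2-\ell}$ in $B(r(s),\nu)\frac{r'(s)}{r(s)}$ is a linear combination of $S^\alpha(\mu^\nu)$ with $|\alpha| = \ell, \ell-2, \ell-4, \dots, \ell - e$.

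Next I would invoke the change-of-basis results. Each $S^\alpha(\mu^\nu)$ is a symmetric polynomial of degree $|\alpha|$ in $\mu^\nu_1,\dots,\mu^\nu_{2n}$, so by Theorem~7.4.4 of \cite{Sta99} (the theorem quoted just before the corollary) it is a linear combination of products $E^L(\mu^\nu)$ with $|L| = |\alpha|$. Feeding this into the previous paragraph shows that the coefficient of $s^{2n-2-\ell}$ is a linear combination of terms $E^L(\mu^\nu) = E_k(\mu^\nu)^{n_k}\cdots E_1(\mu^\nu)^{n_1}$ with $|L| = n_1 + 2n_2 + \cdots + kn_k$ equal to $\ell - e$ for some even $e$ with $0 \le e \le \ell$, which is exactly the first assertion.

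For the last sentence of the corollary I would use the preceding Lemma stating that $E_\ell(\mu^\nu)$ is a homogeneous polynomial of degree $\ell$ in the coordinates of $\nu$ (which followed from \eqref{symmetric:eq} and the linearity of $A_\nu$ in $\nu$). Then any product $E^L(\mu^\nu)$ with $|L| = \ell - e$ is homogeneous of degree $\ell - e$ in $\nu = (\nu_1,\dots,\nu_m)$, hence a linear combination of monomials in $\nu_1,\dots,\nu_m$ each of degree $\ell - e$; taking the linear combination over all the $E^L$ that appear preserves this. I do not expect a genuine obstacle here — the corollary is essentially a bookkeeping synthesis — but the one point requiring care is the parity tracking: one must check that the even shift from $\frac{1}{s^2-1}$ and the even gaps in Lemma~\ref{lem: first expansion} interact correctly so that the degrees appearing really are $\ell, \ell-2, \dots, \ell-e$ with $e$ even, rather than producing a spurious odd-degree term. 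This follows because both sources of degree-lowering move in steps of $2$, and the leading term ($e=0$) is the top-degree symmetric polynomial $E^L$ with $|L| = \ell$.
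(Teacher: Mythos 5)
Your argument is correct and is essentially the paper's own (the paper simply asserts the corollary ``follows from this theorem and Lemma \ref{lem: first expansion}''): you chain Lemma \ref{lem: first expansion}, the change of basis from the $S^\alpha(\mu^\nu)$ to the $E^L(\mu^\nu)$ via the quoted Stanley theorem, and the homogeneity of $E_\ell(\mu^\nu)$ in $\nu$, exactly as intended. Your extra care about the even shifts coming from $(s^2-1)^{-1}$ is a sound piece of bookkeeping that the paper leaves implicit and does not change the route.
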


We will not need to know the exact values of the coefficients in this expansion. Rather, the key phrase is the last sentence in the above corollary: 
{\em the coefficient of $s^{2n-2- \ell}$ is a linear combination of monomials
 in the components of $\nu =(\nu_1, \dots , \nu_m) \in S^{m-1}$ each having degree $\ell - e$}.\m

\begin{proof}[Proof of Proposition \ref{prop: structure of non-z terms in terms of nu}.]
In view of Corollary \ref{cor:B-expansion in nu}  and (\ref{eqn:B expansion}), equations \eqref{eqn:B1} and \eqref{eqn:B2} both hold. Additionally, the 
real analyticity of the Taylor remainder term $O(s,\nu)$ for $s>1$ and $\nu\in S^{m-1}$ is assured from Lemma
\ref{lem:real analyticity of B}, the (Taylor) remainder $O(s, \nu) $. Furthermore  $O(s, \nu) $ is bounded in $s>1$.

The proof of Part 2 is simpler.
An expansion for $r(s)^{-u}$ about $s=\infty$ yields
\[
r(s)^{-u} = \Big(\frac{s-1}{s+1}\Big)^{-u}
= 1 - \frac {2u}s + \frac{2u^2}{s^2} - \frac{2u(1+2u^2)}{s^3} + \sum_{k=4}^\infty \frac{\tilde p_k(u)}{s^k}
\]
where $\tilde p_k(u)$ is a a polynomial that has only odd powers of $u$ if $k$ is odd and even powers of $u$ if $k$ is even
(this fact can be proven by setting $w = \frac 1s$, and Taylor expansion around $w=0$, and an induction argument on the form of the derivatives). This means
\begin{align*}
r(s)^{-A_\nu}
&= 1 - \frac {2A_\nu}s + \frac{2A_\nu^2}{s^2} - \frac{2A_\nu(1+2A_\nu^2)}{s^3} + \sum_{k=4}^\infty \frac{\tilde p_k(A_\nu)}{s^k}.
\end{align*}
Equation \eqref{eqn:nu decomp of r^-A} now follows from expanding the appropriate $q\times q$ minor determinant.
\end{proof}
  
\section{ Expansion of $A$ in Denominator in the case $1/2 \leq r <1$, $q \neq n$} \label{sec:A expansion, large r}
The formula for $A(r, \nu, \hatq)$ is given in (\ref{eqn:A-def}).
Using (\ref{eqn:def f,g}), the coefficient function in front of $|Q_j (\nu, \hatq)|^2$ is 
\[
f(r, \mu_j^\nu) + g(r, \mu_j^\nu) = 2f(r, \mu_j^\nu) + \mu_j^\nu 
\] 
which in the $s$ variables (where $r=r(s) = \frac{s-1}{s+1}$), using (\ref{eqn:F}), this becomes
\begin{equation}
\label{eqn:FP}
\textrm{Coefficient of $|Q_j|^2$} \ =2F(s, \mu_j^\nu) + \mu_j^\nu= s  + \sum_{k=1}^\infty
\frac{p_{2k} (\mu_j^\nu)}{s^{2k-1}} 
\end{equation}
where $p_{2k} (u) $ is a polynomial of degree $2k$ with only even powers of $u$. From  (\ref{eqn:F}),  the first two terms are
\[
p_2(u) =
\frac{u^2-1}{3}; \ \ p_4(u) = -\frac{u^4 -5u^2 +4}{45} \ \ \textrm{etc.}
\]
Now using  (\ref{eqn:A-def}),  (\ref{eqn:FP}), and (\ref{eqn:key}), we obtain
\begin{align}
\nn 
A(r(s), \nu, \hatq) -i\nu_1&=s|\hatq|^2+ \sum_{j=1}^{2n}  \sum_{k=1}^\infty \frac{p_{2k} (\mu_j^\nu)}{s^{2k-1}} |Q_j(\nu, \hatq )|^2 -i \nu_1 \\
\label{eqn:A-2-expansion}
&=( s|\hatq|^2 - i\nu_1)+\sum_{k=1}^\infty \frac{\hatq^* \cdot p_{2k} (A_\nu ) \cdot \hatq}{s^{2k-1}} .
\end{align}
We denote by $e_j \in \C^{2n}$ the $j$th unit vector $e_j = (0, \dots, 1, \dots, 0)$ (1 in the $j^{th}$ position).
We observe that 
\begin{equation}
\label{eqAzj}
\frac{\p A}{\p z_j} \Big|_{(r(s), \nu, \hatq)}= s\hatq^* \cdot e_j + \sum_{k=1}^\infty \frac{\hatq^* \cdot p_{2k} (A_\nu ) \cdot e_j}{s^{2k-1}}
\end{equation}
and
\begin{equation}
\label{eqAzzj}
\frac{\p^2 A}{\p z_{j_1}\p\z_{j_2}}\Big|_{(r(s), \nu, \hatq)} = se_{j_2}^* \cdot e_{j_1} + \sum_{k=1}^\infty \frac{e_{j_2}^* \cdot p_{2k} (A_\nu ) \cdot e_{j_1}}{s^{2k-1}}
\end{equation}
since the substitution of $\hatq$ for $z$ comes \emph{after} the differentiation.

Note the coefficients $ \hatq^* \cdot p_{2k} (A_\nu ) \cdot \hatq$ consist of quadratic terms in $\hatq$ and $\bar\hatq$ 
together with a linear combination of monomial terms in the coordinates of $\nu$ of degree $2k - e$ where $e$ is even with $0 \leq e \leq 2k$.

\m
%
%
\section{ Expanding the Kernel for $N$ in the case $1/2 \leq r <1$, $q \neq n$} \label{sec:N-section}
From (\ref{eqn:NI1I2 q}) and (\ref{eqBL}), to estimate $N_{I_1',I_1'',I_2}(\hatq)$, we must investigate the integrands 
\begin{equation}\label{eqn:N_{K,J} with s}
N_{K,J} (\hatq,s, \nu) =  \det ([r(s)^{-\bar A_\nu}]_{K,J}) \frac{B(r(s), {\nu^t}) r'(s)}{r(s)} 
\frac{(\nu^t)^{I_2} (\nabla_{z,\bar z}A(r(s),\nu^t,\hatq))^{I_1'}(\nabla^2_{z ,\bar z}A(r(s),\nu^t,\hatq))^{I_1''}}
{(A(r(s),\nu^t, \hatq)-i \nu_1)^{2n+m-1+|I_1'|+|I_1''|+|I_2|}}.
\end{equation}
For nonzero $V \in \C$, consider the Taylor expansion
\[
\frac{1}{(V+ \zeta)^{2n+m-1+|I_1'|+|I_1''|+|I_2|} } = \frac{1}{V^{2n+m-1+|I_1'|+|I_1''|+|I_2|}} + \sum_{j=1}^\infty 
\alpha_j \frac{\zeta^j}{V^{2n+m-1+|I_1'|+|I_1''|+|I_2|+j}}
\]
which converges uniformly for $|\zeta| \leq |V|/2$ (the values of $\alpha_j$ are unimportant).
We make use of the following expansions:
From (\ref{eqn:A-2-expansion}) with $V= s|q|^2 - i \nu_1$ and $\zeta = 
\sum_{k=1}^\infty \frac{\hatq^* \cdot p_{2k} (A_{\nu^t} ) \cdot \hatq}{s^{2k-1}} $ we have
\begin{align}
&\frac{(\nu^t)^{I_2} (\nabla_{z,\bar z}A(r(s),\nu^t,\hatq))^{I_1'}(\nabla^2_{z,\bar z}A(r(s),\nu^t,\hatq))^{I_1''}}
{  (A(r(s), \nu^t, \hatq)-i \nu_1 )^{2n+m-1+|I_1'|+|I_1''|+|I_2|}} \nn \\
& =(\nu^t)^{I_2}\bigg[ \frac{1}{( s|\hatq|^2 - i \nu_1)^{2n+m-1+|I_1'|+|I_1''|+|I_2|}} + \sum_{j=1}^\infty
\frac{\alpha_j \left[\sum_{k=1}^\infty \frac{\hatq^* \cdot p_{2k} (A_{\nu^t} ) \cdot \hatq}{s^{2k-1}} \right]^j}{( s|\hatq|^2 - i \nu_1)^{2n+m-1+j+|I_1'|+|I_1''|+|I_2|}}\bigg] 
\label{eqn:A-expansion, I_1,I_2}\\
&\times (\nabla_{z,\bar z}A(r(s),\nu^t,\hatq))^{I_1'}(\nabla^2_{z,\bar z}A(r(s),\nu^t,\hatq))^{I_1''}. \nn
\end{align}
Carefully writing out $(\nabla_{z,\bar z}A(r(s),\nu^t,\hatq))^{I_1'}$ and $(\nabla^2_{z,\bar z}A(r(s),\nu^t,\hatq))^{I_1''}$ would be more confusing than useful, as we only need the lead term and the generic expression
for the higher order terms. Using (\ref{eqAzj}), we write
\begin{align}
(\nabla_{z,\bar z}A(r(s),\nu^t,\hatq))^{I_1'}
&= s^{|I_1'|} C_{0,I_1'}\big((\hatq,\bar\hatq)^{|I_1'|}\big) \nn \\ 
&+ \sum_{K=1}^\infty\sum_{\atopp{k_1 + \dots + k_{|I_1'|}=K}{k_j\geq 0, \text{ all }j}} s^{|I_1'|-2K}C_{k_1,\dots,k_{I_1'},I_1'}\big((\hatq,\bar\hatq)^{|I_1'|}\big)
p_{2k_1,I_1'}(\nu^t) \cdots p_{2k_{|I_1'|},I_1'}(\nu^t)  \label{eqn: A' expansion}
\end{align}
and using (\ref{eqAzzj}), we have
\begin{align}\label{eqn: A'' expansion}
(\nabla_{z,\bar z}^2A(r(s),\nu^t,\hatq))^{I_1''}
&= s^{|I_1''|} C_{0,I_1''} 
+ \sum_{K=1}^\infty\sum_{\atopp{k_1 + \dots + k_{|I_1''|}=K}{k_j\geq 0, \text{ all }j}} s^{|I_1''|-2K}C_{k_1,\dots,k_{I_1''},I_1''}
p_{2k_1,I_1''}(\nu^t) \cdots p_{2k_{|I_1''|},I_1''}(\nu^t).
\end{align}
Here, $ C_{0,I_1'}\big((\hatq,\bar\hatq)^{|I_1'|}\big)$ and $C_{k_1,\dots,k_{I_1'},I_1'}\big((\hatq,\bar\hatq)^{|I_1'|}\big)$ denote polynomial expressions involving $\hatq $ and $\bar\hatq$ of degree $|I_1'|$
and $C_{k_1,\dots,k_{I_1''},I_1''}$ are constants (independent of $\hatq$).

From the derivative products \eqref{eqn: A' expansion} and \eqref{eqn: A'' expansion}, a typical term in 
$(\nabla_{z,\bar z}A)^{I_1'}(\nabla^2_{z,\bar z}A)^{I_1''}$ is of the form
\begin{equation}\label{eqn:derivative error}
s^{|I_1'|+|I_1''|-2K}C_{K}\big((\hatq,\bar\hatq)^{|I_1|}\big)
p_{2k_1}(\nu^t) \cdots p_{2k_{|I_1|}}(\nu^t) 
\end{equation}
where $C_{K}\big((\hatq,\bar\hatq)^{|I_1|}\big)$ is a polynomial in $\hatq$ and $\bar\hatq$ of degree at most $|I_1|$ and $k_1 + \dots + k_{|I_1'|+|I_1''|}=K\geq 1$ 
and each $k_j$ is a nonnegative integer. 

The main term is the lowest degree term in $\frac 1s$ and is given by
\[
(\nu^t)^{I_2} \frac{s^{|I_1'|+|I_1''|} C((\hatq, \bar\hatq)^{|I_1'|})}{( s|\hatq|^2 - i \nu_1)^{2n+m-1+|I_1'|+|I_1''|+|I_2|}} 
\]
where $C((\hatq, \bar\hatq)^{|I_1'|})$ is a monomial in terms in the coordinates for $(\hatq,\bar\hatq)$ of degree $|I_1'|$. Its exact expression is possible to compute but not relevant for this calculation.

Letting 
\[
K_{j,I_1',I_1''}= k_1 + \cdots +k_j +  k_{I_1}+ \cdots +  k_{|I_1'|+|I_1''|} \geq 1+ |I_1'|+|I_1''|, 
\]
a typical term from the expansion of \eqref{eqn:A-expansion, I_1,I_2} is
\begin{multline}
 \textrm{Typical Term in \eqref{eqn:A-expansion, I_1,I_2}}  \\
=(\nu^t)^{I_2}s^{|I_1'|+|I_1''|-2K_{j,I_1',I_1''}}C_{\tilde K}\big((\hatq,\bar\hatq)^{|I_1'|}\big)
p_{2\tilde k_1}(\nu^t) \cdots p_{2\tilde k_{|I_1'|+|I_1''|}}(\nu^t) \label{eqn:typical error term, I_1,I_2} \\
\times \frac{C(\hatq^j, \bar\hatq^j) \tilde p_{2k_1-e_1}(\nu^t) \dots \tilde p_{2k_j-e_j}(\nu^t)} 
{(s|\hatq|^2 -i\nu_1)^{2n+m-1+j+|I_1'|+|I_1''|+|I_2|} s^{(2k_1-1)+ \dots+(2k_j-1)}} 
\end{multline}
where $k_\ell\geq 1$ if $j\geq 1$ (and does not appear if $j=0$)
$C(\hatq^j ,\bar\hatq^j) $ stands for monomial terms in the coordinates for $\hatq$ of degree $j$ and $\bar\hatq$, of degree $j$, and where each 
$\tilde p_{2k_a -e_a} (\nu)$, $1 \leq a \leq j$ is a monomial in the coordinates of $\nu$ of degree $2k_a -e_a$. 
Here, $e_a$ is an even integer with $0 \leq e_a \leq 2k_a$.
Set $E_j= e_1+ \cdots + e_j$ and incorporate the matrix $M_t^{-1}$ into the $C(\hatq^j,\bar\hatq^j)$ term to obtain
\begin{equation}
\label{eqn:typical A}
\textrm{Typical Term in (\ref{eqn:typical error term, I_1,I_2})} =
\frac{C_t((\hatq,\bar\hatq)^{2j+|I_1'|} ) \nu^{2K_{j,I_1',I_1''}-E_j}\nu^{I_2}}{(s|\hatq|^2 -i\nu_1)^{2n+m-1+j+|I_1'|+|I_1''|+|I_2|} s^{2K_{j,I_1',I_1''}-(j+|I_1'|+|I_1''|)}}
\end{equation}
and $E_j$ is a an even integer with $0 \leq E_j \leq 2(k_1+\cdots+k_j)$. 
Note that we have used the same abuse of notation with $\nu^{2K_j-E_j}$ as we did in (\ref{eqn:B2}) and the dependence on $t$ is 
a (possibly nonsmooth but certainly bounded) dependence on $t/|t|$. 
We will not need all the terms in the expansion - just up through $j+|I_1'|+|I_1''|=2n-1$ with a remainder term involving $j+|I_1'|+|I_1''|=2n$ (and therefore
$K_{j,I_1',I_1''} := K_{2n}\geq 2n$).
In particular, using (\ref{eqn: A' expansion}) and (\ref{eqn: A'' expansion}), 
\begin{equation}
\label{eqn:remainder A}
\textrm{Typical Remainder Term in (\ref{eqn:A-expansion, I_1,I_2})} = \frac{O_t(1)O(\nu,s)} {(s|\hatq|^2- i\nu_1)^{4n+m-1} s^{2K_{2n}-2n}}
\end{equation}
where $O_t(1)$ is a real analytic function of the coordinates of $\hatq$ and $\bar\hatq$ that may depend on $t$.
Also, $O(\nu,s)$ stands for a real analytic function in $\nu \in S^{m-1}$ and $s>1$ and bounded in $s$. Note that the power of $s$ in the denominator is at least $2n$ since 
$K_{2n} \geq 2n$, as mentioned above.

 \m
 
In the expansions of $B(r(s), \nu) \frac{r'(s)}{r(s)}$ and $\det( [r^{-\bar A_\nu}]_{K,J})$ given in  (\ref{eqn:B1}) and \eqref{eqn:nu decomp of r^-A}, respectively,  writing
$(1-1/s^2)^{-1} = \sum_{j'=0}^\infty s^{-2j'}$. Therefore, by (\ref{eqn:B2}), we see that up to the coefficients of some polynomials,
a typical term in the expansion of $\det( [r^{-\bar A_\nu}]_{K,J}) \frac{ B(r(s), \nu) r'(s)}{r(s)}$ is
\begin{equation}
\label{eqn:typical B}
\textrm{Typical Term of }   \det( [r^{-\bar A_\nu}]_{K,J}) \frac{B(r(s), \nu) r'(s)}{r(s)} =
s^{2n-2j'-2-\ell'} \nu^{\ell'-e'}
\end{equation}
together with a remainder of the form $\frac{O(\nu,s)}{s^{2+2j'}}$. Note $e'$ is even and $0 \leq e' \leq \ell'$.
 
 Now the typical term of $N_{K,J}$ is the product of a term in (\ref{eqn:typical A}) with a term in  (\ref{eqn:typical B}). Therefore
 \begin{align}
 \label{eqn:typical N1}
 \textrm{Typical Term in $N_{K,J}$} &=
C(\hatq,  \bar\hatq)^{2j+|I_1'|}\frac{s^{N-2-\ell} \nu^{\ell -e}(\nu^t)^{I_2} }{(s |\hatq|^2-i \nu_1)^{N+m-1+|I_2|}} \ \ \ \textrm{where} \\
\label{Nj}
 N&=2n+j+|I_1'|+|I_1''|, \ \ \ell=\ell'+2j' +2K_{j+|I_1'|+|I_1''|}, \ \ e=e'+E_j+2j' .
\end{align}
Note that $e$ is even
 with $0 \leq e \leq \ell$, due to  the constraints listed in on the indices in (\ref{eqn:typical A}) and (\ref{eqn:typical B}).
 
 \m
 
 The remainder term for $N_{K,J}$ is the product of the remainders given in (\ref{eqn:remainder A}) and 
 the remainder given just after (\ref{eqn:typical B}): a typical term comprising the remainder is
 \begin{equation}
 \label{eqn:remainder N1}
 \textrm{Typical Remainder Term for $N_{K,J}$} = \frac{O(\hatq^{4n}) O (\nu, s)}{(s|\hatq|^2 -i \nu_1)^{4n+m-1} s^{2K_{2n}-2n+2+2j'} }
 \end{equation}
 where $O(\nu,s)$ is real analytic function in $\nu \in S^{m-1}$ and $s\geq 3$ and bounded in $s$. Note that the exponent in $s$ in the denominator is at least 
$2$ since $K_j \geq j, \ j \geq 1$.
 \m
 We will now show that the integral (over $\nu \in S^{m-1}$, and $s \geq 1$) of the typical term in 
 (\ref{eqn:typical N1}) is bounded in $\hatq$. We will also show the same for a remainder term in
 (\ref{eqn:remainder N1}).
 
  \m
 As to the first task, let $\hat r=|\hatq|^2 >0$ and define
  \[
  H_{N, \ell,m, e,I_2} (\hat r,s, \nu) = \frac{s^{N-2-\ell} \nu^{\ell+I_2-e}}{(s \hat r-i\nu_1)^{N+m-1+|I_2|}} .
  \]

To establish Theorem \ref{thm:N I_1 I_2 mainestimate} over the region $1/2 \leq r <1$, we need to show that for each $\ell \geq 0$, there is a uniform constant $C$ such that 
\begin{equation}
\label{eqn:basic estimate}
\Big| \int_{\nu \in S^{m-1} } \int_{s=3}^\infty H_{N, \ell,m, e,I_2} (\hat r,s, \nu) \, d \nu \Big| \leq C
\end{equation} 
for all $\hat r >0$ near zero.

\m

\section{ Unit Sphere Integrals.} \label{sec:sphere-integral}
To compute the integral of $ H_{N, \ell,m, e, I_2} (\hat r,s, \nu)$ over the unit sphere, $S^{m-1}$ in $\R^m$, we need to use some easy facts about spherical integrals:

\begin{enumerate}

\item Let $\nu=(\nu_1, \nu') \in S^{m-1}$, then $\nu' $ belongs to a $m-2$ dimensional sphere in $\R^{m-1}$ of radius
$|\nu'| = \sqrt{1- \nu_1^2}$.

\item Let $\theta $ be the ``angle'' between $\nu$ and the $\nu'$ plane; note that $\nu_1 = \sin (\theta)$, $-\pi/2 \leq \theta \leq \pi/2$; and $|\nu'| =\cos \theta$.

\item Surface measure on the unit sphere in $\R^m$ is $d \nu = (\cos \theta)^{m-2}\, d \theta\, d \nu'$
where $d \nu' $ is surface measure on, $S^{m-2}$,  the unit sphere in $R^{m-1}$.

\item The integral of any odd function of $\nu'$  over 
$S^{m-2}$,  the unit sphere in $R^{m-1}$, will be zero.

\end{enumerate}

Using the last fact, we claim that we can assume the monomial $\nu^{ \ell+I_2-e}$ depends on 
$\nu_1$ only. To see this, write $\nu^{ \ell+I_2-e} =(\nu')^a \nu_1^b$ with $|a|+b = |\ell|+|I_2|-|e|$. By (4),  if $|a|$ were odd,
then  the $\nu'$-integral would be zero. Thus we can assume $a=e'$ where $|e'|$ is even, which implies $|b|=|\ell|+|I_2|-(|e|+|e'|)=|\ell|+|I_2|-|E|$ with $|E|$ even. 
We can then factor out
the $(\nu')^a$ from the $\nu_1$ integral and we are left with  $\nu_1^{ \ell+I_2-E}$ 
within the $\nu_1$ integral.

We now change variables and let $x=\nu_1 = \sin \theta$, $-1 \leq x \leq 1$. Note that $\cos \theta = \sqrt{1-x^2}$
and $ d \theta = \frac{dx}{\sqrt{1-x^2}}$. Therefore
\begin{equation}
\label{surfmeasure}
d \nu = (1-x^2)^{(m-3)/2} \, dx \, d \nu'
\end{equation}
where $d \nu'$ is surface measure on $S^{m-2}$.
The desired estimate in (\ref{eqn:basic estimate})
will follow from the following lemma:

\begin{lem}

\label{lem:basicA-estimate}
For any nonnegative integers $N$, $m$ and $\ell$ with $m \geq 2$ and any even integer $E$ with $0 \leq E \leq |\ell|+|I_2|$, let
\[
A_{N,m}^{\ell, E,I_2} (\hat r) = \int_{x=-1}^1 \int_{s=3}^\infty 
\frac{(1-x^2)^{(m-3)/2} s^{N-2-\ell} x^{\ell -E+|I_2|} \, ds \, dx}
{(s\hat r-ix)^{N+m-1+|I_2|}}
\]
then $A_{N,m,I_2}^{\ell, E,I_2} (\hat r)$ is a smooth function of $\hat r>0$ up to $\hat r=0$.

\end{lem}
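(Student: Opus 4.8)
\textbf{Proof plan for Lemma \ref{lem:basicA-estimate}.}
The strategy is to do the $s$-integral first, exploiting the fact that $s\hat r - ix$ never vanishes on the domain $s\geq 3$, $x\in[-1,1]$ (since $\hat r>0$ forces $\re(s\hat r-ix)\geq 3\hat r>0$), and then show the resulting function of $x$ integrates to something smooth in $\hat r$ down to $\hat r=0$. First I would substitute $s = u/\hat r$ (equivalently scale out $\hat r$), which turns $(s\hat r-ix)$ into $(u-ix)$ and produces a factor $\hat r^{-(N-1-\ell)}$ from $s^{N-2-\ell}\,ds$ against $\hat r^{-(N+m-1+|I_2|)}$ times $\hat r^{N+m-1+|I_2|}$ from the denominator; one must track the net power of $\hat r$ carefully — there will be a single power $\hat r^{m+\ell+|I_2|}$ (up to sign) out front after the change of variables, with the $u$-integral running from $3\hat r$ to $\infty$. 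The key point is that $\int_{3\hat r}^\infty u^{N-2-\ell}(u-ix)^{-(N+m-1+|I_2|)}\,du$ converges at $u=\infty$ because the integrand decays like $u^{-(m+1+|I_2|)}$, and as $\hat r\to 0^+$ the lower limit $3\hat r\to 0$; one must check the integrand is also integrable near $u=0$, which holds precisely when $N-2-\ell\geq 0$ — and when it is negative (small $\ell$ relative to $N$ is not the issue; large $\ell$ is) one instead keeps the lower limit at $3\hat r$ and Taylor-expands in $\hat r$, since on $[3\hat r, C]$ the factor $(u-ix)^{-(N+m-1+|I_2|)}$ is smooth.

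The cleanest execution is: (i) integrate in $s$ by antidifferentiating, or better, write $\frac{1}{(s\hat r - ix)^{N+m-1+|I_2|}}$ and integrate term by term after the substitution $u=s\hat r$; (ii) observe that after pulling out the power of $\hat r$, what remains is $\int_{3\hat r}^\infty u^{N-2-\ell}(u-ix)^{-(N+m-1+|I_2|)}\,du$, an analytic function of the \emph{complex} variable $v:=3\hat r$ in a neighborhood of $v=0$ whenever $N-2-\ell\geq 0$ (the integrand is holomorphic in $x$ near the real segment and the integral converges locally uniformly), and a function with an at-worst logarithmic or negative-power singularity that is nonetheless cancelled by the prefactor $\hat r^{m+\ell+|I_2|}$ when $N-2-\ell<0$; (iii) then perform the $x$-integral of $(1-x^2)^{(m-3)/2}x^{\ell-E+|I_2|}$ against this, noting $\ell - E + |I_2|$ is a nonnegative integer with the same parity as $|I_2|$ (from the constraint that $E$ is even and $0\leq E\leq |\ell|+|I_2|$), so no new singularity is introduced and the $x$-integral of a smooth-in-$\hat r$, continuous-in-$x$ integrand over the compact interval $[-1,1]$ is smooth in $\hat r$; (iv) differentiate under the integral sign in $\hat r$ as many times as desired, with each derivative landing on $(s\hat r - ix)^{-k}$ and raising its power, which only improves convergence, so all $\hat r$-derivatives exist and are continuous up to $\hat r = 0$.

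The main obstacle is the bookkeeping of powers of $\hat r$ and the dichotomy $N-2-\ell\geq 0$ versus $N-2-\ell<0$: in the first regime the $s$-integral extends to a convergent integral over $(0,\infty)$ and the answer is essentially $\hat r^{m+\ell+|I_2|}$ times a constant (times an analytic function of $x$), manifestly smooth; in the second regime the lower limit $3\hat r$ cannot be dropped and one must argue that $\int_{3\hat r}^\infty u^{N-2-\ell}(u-ix)^{-(N+m-1+|I_2|)}\,du$ blows up like $\hat r^{-(\ell+1-N)}$ (with a possible $\log\hat r$ when $\ell+1-N=0$... but in fact $N-1-\ell$ is the relevant exponent and it is nonzero when $N-2-\ell<0$), and that this is more than compensated by the prefactor $\hat r^{m+\ell+|I_2|}$ with $m\geq 2$. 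The borderline case and the $\log$ possibility require a short separate check, but once the power-counting is pinned down, smoothness in $\hat r$ up to $0$ follows by differentiation under the integral sign, so I expect the substance of the proof to be entirely in items (i)--(ii) above.
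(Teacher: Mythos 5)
Your plan has two genuine gaps, one computational and one structural. First, the power bookkeeping after the substitution $u=s\hat r$ is wrong: $s^{N-2-\ell}\,ds = u^{N-2-\ell}\,\hat r^{-(N-1-\ell)}\,du$ and the denominator becomes $(u-ix)^{N+m-1+|I_2|}$ with no compensating power of $\hat r$, so the prefactor is $\hat r^{\,\ell+1-N}$, not $\hat r^{\,m+\ell+|I_2|}$. In the regime $N-2-\ell\ge 0$ this is a \emph{negative} power of $\hat r$, so nothing is ``manifestly smooth''; boundedness as $\hat r\to 0$ can only come from a cancellation in the remaining $x$-integral, which your plan does not supply. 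Moreover the inner integral $\int_{3\hat r}^{\infty} u^{N-2-\ell}(u-ix)^{-(N+m-1+|I_2|)}\,du$ is not uniformly controlled in $x$: at $x=0$ it is of size $\hat r^{-(m+\ell+|I_2|)}$, so the step ``the $x$-integral of a smooth-in-$\hat r$, continuous-in-$x$ integrand over $[-1,1]$ is smooth'' fails exactly where it matters, near $x=0$. Likewise, each $\hat r$-derivative brings down a factor of $s$, which does not improve convergence uniformly as $\hat r\to 0$; uniformity up to $\hat r=0$ is the whole content of the lemma and is not addressed by ``differentiate under the integral sign.''

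The structural gap is that your argument never uses the evenness of $E$ in an essential way, whereas the lemma is false for odd $E$ (the paper points this out explicitly), so any proof along your lines must break somewhere. The needed input is a cancellation in the $x$-integral (the $\nu_1$-integral) near $x=0$. The paper obtains it as follows: it writes $A_{N,m}^{\ell,E,I_2}=C\,D_{\hat r}^{\,N-2-\ell}\{B_{m,I_2}^{\ell,E}\}$, where $B$ has no $s$-power in the numerator and $D_{\hat r}^{j}$ is a $\hat r$-derivative (or anti-derivative if $j<0$); it computes the $s$-integral exactly, giving $B(\hat r)=\frac{1}{\hat r\,(m+\ell+|I_2|)}\,b(\hat r)$ with $b$ a pure $x$-integral with denominator $(3\hat r-ix)^{m+\ell+|I_2|}$; it then deforms the $x$-contour to the upper unit semicircle $C^+$, on which the denominator is bounded away from zero uniformly down to $\hat r=0$, so $b$ extends analytically to a neighborhood of $\hat r=0$; finally, explicit contour computations (integration by parts reducing to $\int_{C^+} z^{-(3+E)}\,dz=0$, valid precisely because $E$ is even) give $b(0)=0$, which cancels the $1/\hat r$ and yields smoothness. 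Some substitute for this contour deformation plus the vanishing at $\hat r=0$ is the missing idea in your proposal; without it the power counting cannot close.
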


As shown in the proof, the lemma is not true if $E$ is odd.

\m

\begin{proof}[Proof of Lemma \ref{lem:basicA-estimate}] First write
\[
A_{N,m}^{\ell, E,I_2} (\hat r) = C_{N, \ell,I_2} D_{\hat r}^{N-(2+\ell)}  \left\{ B_{m,I_2}^{\ell, E}(\hat r) \right\}
\]
where $C_{N, \ell}$ is a constant and
\[
B_{m,I_2}^{\ell, E}(\hat r) = \int_{x=-1}^1 \int_{s=3}^\infty 
\frac{(1-x^2)^{(m-3)/2}  x^{\ell -E+|I_2|} \, ds \, dx}
{(s\hat r-ix)^{m+\ell +1+|I_2|}}.
\]
Here, $D_{\hat r}^j$ indicates the $j^{th}$ derivative with respect to $\hat r$. The index $j$ is allowed to be negative in which case this means the $|j|^{th}$ anti-derivative with respect to $\hat r$ (with a particular initial condition specified at a fixed value of $\hat r=\hat r_0>0$). 

The proof of the fact will be complete once we show $B_{m,I_2}^{\ell, E}(\hat r)$ is smooth for $\hat r>0$ up to $\hat r=0$. The $s$-integral can be computed to give:
\begin{equation}
\label{eqn:Bb}
B_{m,I_2}^{\ell, E}(\hat r) =\frac{1}{\hat r(m+\ell+|I_2|) }\,  b_{m,I_2}^{\ell, E} (\hat r)
\end{equation}
where
\[
b_{m,I_2}^{\ell, E}(\hat r) =\int_{x=-1}^1 
\frac{(1-x^2)^{(m-3)/2}  x^{\ell -E+|I_2|} \,  dx}
{(3\hat r-ix)^{m+\ell +|I_2|}}.
\]
We need to show $b_{m,I_2}^{\ell, E}(\hat r)$ is smooth in $\hat r>0$ up to $\hat r=0$ and that
\begin{equation}
\label{eqn:bm=0}
b_{m,I_2}^{\ell, E}(\hat r=0) =0
\end{equation}
for then (\ref{eqn:Bb}) will imply that $B_{m,I_2}^{\ell, E}(\hat r)$ is smooth in $\hat r>0$ up to $\hat r=0$.  To this end,
we note that for $\hat r>0$, the integrand of $b_{m,I_2}^{\ell, E}(\hat r)$ has an analytic extension in $x$ to the upper half of the complex plane. 
So we can deform the integral using Cauchy to the top half of the unit circle, denoted by $C^+$ from $z=-1$ to $z=+1$ to obtain
\[
b_{m,I_2}^{\ell, E} (\hat r)=\int_{z \in C^+}
\frac{(1-z^2)^{(m-3)/2}  z^{\ell -E+|I_2|} \,  dz}
{(3\hat r-iz)^{m+\ell+|I_2|}}.
\]
This expression shows that $b_{m,I_2}^{\ell, E}(\hat r)$ extends smoothly (in fact, analytically) in $\hat r$ to a 
neighborhood of $\hat r=0$. All that remains to show is that $ b_{m,I_2}^{\ell,E } (\hat r=0)=0$.
We have
\begin{equation}
\label{eqn:special eq1}
(-i)^{m+\ell+|I_2|} b_{m,I_2}^{\ell, E}(\hat r=0)=\int_{z \in C^+} 
\frac{(1-z^2)^{(m-3)/2}   \,  dz}{z^{m+E}}.
\end{equation}
If $m=3$, then this integral is $\int_{z \in C^+} \frac{dz}{z^{3+E}} =0$ since $e$ is even. If $m$ is odd and greater than $3$, then this integral can be reduced using integration by parts with $dv=1/z^{m+E} \, dz$
and  $u=(1-z^2)^{(m-3)/2}$ (note there are no boundary terms at $z=\pm 1$) to obtain
\[
b_m^{\ell, E} (\hat r=0)= c_{m, \ell} \int_{z \in C^+} 
\frac{(1-z^2)^{(m-5)/2}   \,  dz}{z^{m+E-2}}.
\]

 One can continue integrating by parts this until the power of $(1-z^2)$ is zero to obtain
\begin{equation}
\label{eqn:special eq2}
b_{m,I_2}^{\ell, E} (\hat r=0)= \tilde c_{m, \ell,I_2} \int_{z \in C^+} \frac{dz}{z^{3+E}} =0.
\end{equation}
This establishes (\ref{eqn:bm=0}) for $m$ odd.  (Note clearly, the above integral is {\em not} zero if $E$ is odd, which is why this assumption is so necessary).

If $m\geq 4$ is even, then we can integrate by parts until we obtain
\[
b_{m,I_2}^{\ell, E}(\hat r=0)= \tilde c_{m, \ell,I_2} \int_{z \in C^+} \frac{\sqrt{1-z^2} \,dz}{z^{4+E}}.
\]
Since $E$ is even, let $E=2k$ for a nonnegative integer $k$. Amazingly, there is a closed-form
antiderivative:
\begin{equation}
\label{eqn:special eq3}
\int \frac{\sqrt{1-z^2}}{z^{4+E}}  \,dz
= - \sum_{j=0}^k \frac{(1-z^2)^{j+3/2}}{z^{2j+3} }
\binom kj \frac{1}{(2j+3)}.
\end{equation}
Clearly this antiderivative vanishes at both $z=\pm 1$.  If $m=2$, then one can integrate by parts in (\ref{eqn:special eq1}) with $dv= \frac{z \, dz}{\sqrt{1-z^2}}$ and reduce to this integral to (\ref{eqn:special eq3}). Thus, Lemma \ref{lem:basicA-estimate}, and 
hence (\ref{eqn:basic estimate}) are proved.
\end{proof}

%
%
%
\section{The Remainder Term, $q \neq n$}
\label{sec:end upper half}

To restate the remainder in (\ref{eqn:remainder N1})
\[
\textrm{Remainder} = \frac{O (\nu, s)}{(s|\hatq|^2 -i \nu_1)^{4n+m-1} s^{J} }, \ \ \ 
\textrm{with } J \geq 2.
\]
We use the facts (1)-(3) about spherical integrals in the previous section with $x=\nu_1$.
Since $s^{-J}$ is integrable over $\{s \geq 3\}$ and since $O(\nu', \nu_1,x)$ is real analytic (and hence uniformly bounded) in $\nu' \in (\sqrt{1-x^2}) S^{m-2}$ ($m-2$ dimensional sphere of radius $\sqrt{1-x^2}$), it suffices to prove the following lemma,  which will finish the proof of Theorem \ref{thm:N I_1 I_2 mainestimate} for the integral over the region $1/2 \leq r <1$.

\begin{lem}
For $m \geq 2$, let
\[
R(s,\hat r,\nu') = \int_{x=-1}^1 \frac{(1-x^2)^{\frac{m-3}{2}}O(\nu', x,s) \, dx}{(s\hat r-ix)^{4n+m-1}}.
\]
Then $R(s,\hat r,\nu')$ is uniformly bounded for $s \geq 3$, $\hat r \geq 0$, and $\nu' \in (\sqrt{1-x^2}) S^{m-2}$.
\end{lem}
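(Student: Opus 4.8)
The plan is to reduce everything to the contour-deformation trick already used in the proof of Lemma~\ref{lem:basicA-estimate}. First I would fix $s\geq 3$ and $\nu'\in(\sqrt{1-x^2})S^{m-2}$ and regard $R(s,\hat r,\nu')$ as a function of $\hat r\geq 0$ only, with all other parameters playing the role of harmless bounded analytic coefficients. The key point is that $O(\nu',x,s)$ is real analytic in $x$ on a neighborhood of $[-1,1]$ (it is the Taylor remainder of functions that are real analytic in $\nu$, hence in $x=\nu_1$), so for $\hat r>0$ the integrand extends analytically in $x$ to the closed upper half-disc: the only possible singularity of $(s\hat r-ix)^{-(4n+m-1)}$ sits at $x=-is\hat r$, which lies in the \emph{lower} half-plane. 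Therefore I can deform the contour from $[-1,1]$ to the upper semicircle $C^+$, just as in \eqref{eqn:special eq1}, picking up no residues.

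Once the integral is written over $C^+$, the denominator $(s\hat r-iz)^{4n+m-1}$ is uniformly bounded below away from zero: for $z\in C^+$ we have $\im(-iz)=-\re z$... more carefully, $|s\hat r - iz|\geq |\im(s\hat r-iz)| = |\re z|$ is not enough near $z=\pm i$, but $|s\hat r-iz|^2 = (s\hat r+\im z)^2+(\re z)^2 \geq (\im z)^2 + (\re z)^2 = 1$ on the unit semicircle since $\im z\geq 0$ and $s\hat r\geq 0$. Hence $|s\hat r-iz|\geq 1$ on $C^+$, uniformly in $s$ and $\hat r$. The numerator $(1-z^2)^{(m-3)/2}O(\nu',z,s)$ is continuous on the compact arc $C^+$ and bounded there uniformly in $s\geq 3$ (because $O$ is bounded in $s$ by the conclusion of Proposition~\ref{prop: structure of non-z terms in terms of nu}) and in $\nu'$ (real analyticity on a compact sphere). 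The arc has finite length, so the deformed integral is bounded by a constant independent of $s,\hat r,\nu'$, which is exactly the claim; and since the deformed expression is continuous in $\hat r$ down to $\hat r=0$, the bound persists at $\hat r=0$.

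The one technical wrinkle I would flag is the factor $(1-z^2)^{(m-3)/2}$ when $m$ is even, where it has branch points at $z=\pm1$ on the endpoints of the arc. This is the same mild issue handled in Lemma~\ref{lem:basicA-estimate}: for $m$ even the exponent is a half-integer, $(1-z^2)^{(m-3)/2}$ is integrable near $z=\pm1$ and the contour shift across an integrable endpoint singularity is still valid (no boundary contributions), so the bound on $\int_{C^+}$ still follows from length-times-sup once one notes the singularity is integrable. For $m$ odd the exponent is a nonnegative integer and there is nothing to worry about.

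Thus the proof is short: analytic continuation in $x$, deformation to $C^+$, the elementary lower bound $|s\hat r-iz|\geq 1$ on the semicircle, and boundedness of the (real-analytic, $s$-bounded) numerator on the compact arc. I expect the main — though still minor — obstacle to be cleanly justifying the contour deformation uniformly in $s$ and in $\nu'$ and handling the $z=\pm1$ endpoints when $m$ is even; everything else is a direct estimate. I would write:

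\begin{proof}
Fix $s\geq 3$ and $\nu'\in(\sqrt{1-x^2})S^{m-2}$. Since $O(\nu',x,s)$ is real analytic in $x$ on a neighborhood of $[-1,1]$ and $(s\hat r-ix)^{-(4n+m-1)}$ is, for each $\hat r>0$, analytic in a neighborhood of the closed upper half-disc (its pole $x=-is\hat r$ lies in the open lower half-plane), Cauchy's theorem lets us deform the contour of integration in $x$ from the segment $[-1,1]$ to the upper unit semicircle $C^+$ oriented from $-1$ to $1$; when $m$ is even the factor $(1-x^2)^{(m-3)/2}$ has an integrable singularity at the endpoints $x=\pm1$ and the deformation remains valid with no boundary contribution. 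Hence
\[
R(s,\hat r,\nu') = \int_{z\in C^+} \frac{(1-z^2)^{\frac{m-3}{2}}O(\nu',z,s)\,dz}{(s\hat r-iz)^{4n+m-1}}.
\]
For $z\in C^+$ we have $|z|=1$, $\im z\geq 0$, and $s\hat r\geq 0$, so
\[
|s\hat r-iz|^2 = (s\hat r + \im z)^2 + (\re z)^2 \geq (\im z)^2 + (\re z)^2 = 1,
\]
giving $|s\hat r-iz|\geq 1$ uniformly in $s\geq 3$ and $\hat r\geq 0$. By Proposition \ref{prop: structure of non-z terms in terms of nu}, $O(\nu',z,s)$ is real analytic in its arguments and bounded in $s$; together with the compactness of $C^+$ and of the sphere in $\nu'$, the numerator $(1-z^2)^{(m-3)/2}O(\nu',z,s)$ is bounded on $C^+$ by a constant $C_0$ independent of $s,\hat r,\nu'$ (using integrability of the endpoint singularity when $m$ is even). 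Since $C^+$ has length $\pi$, we conclude
\[
|R(s,\hat r,\nu')| \leq \pi\, C_0
\]
for all $s\geq 3$, $\hat r\geq 0$, and $\nu'\in(\sqrt{1-x^2})S^{m-2}$. As the deformed expression is also continuous in $\hat r$ down to $\hat r=0$, the bound holds at $\hat r=0$ as well.
\end{proof}
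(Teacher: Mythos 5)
There is a genuine gap: your contour deformation from $[-1,1]$ to the full upper unit semicircle $C^+$ requires the factor $O(\nu',z,s)$ to extend analytically (with uniform bounds) to the entire closed upper half-disc, and nothing in the paper provides that. Real analyticity of the remainder in $\nu\in S^{m-1}$ (Lemma \ref{lem:real analyticity of B}, Proposition \ref{prop: structure of non-z terms in terms of nu}) only yields a holomorphic extension to some \emph{small, unquantified} complex neighborhood of the real interval, and the hypothesis that the eigenvalues of $A_\nu$ stay away from zero is only assumed for real $\nu$; for complexified $\nu_1$ far from the axis the contour-integral representations behind the analyticity (e.g.\ $D(\nu,z)=\det(A_\nu-zI)\neq 0$ on $\gamma$) can fail, so there is no license to push the contour a distance $O(1)$ into the upper half-plane, nor any bound on $O(\nu',z,s)$ there. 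This is exactly the difference from Lemma \ref{lem:basicA-estimate}, where the deformation to $C^+$ was legitimate because the numerator was an explicit polynomial times $(1-z^2)^{(m-3)/2}$, analytic in the whole half-disc. Your lower bound $|s\hat r-iz|\geq 1$ on the semicircle is fine, but it is being applied on a contour you are not entitled to reach.

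The paper's proof sidesteps this by splitting $[-1,1]$ at $|x|=1/2$. On $\{|x|\geq 1/2\}$ no deformation is needed: $|s\hat r-ix|\geq |x|\geq 1/2$, the numerator is bounded there (with only an integrable endpoint singularity of $(1-x^2)^{(m-3)/2}$ when $m=2$), and $s^{-J}$, $J\geq 2$, supplies integrability in $s$. On $\{|x|\leq 1/2\}$ it deforms only to a path $C$ from $-1/2$ to $1/2$ lying in the upper half-plane \emph{inside the small neighborhood where the real-analytic continuation is guaranteed}; on such a path the denominator is uniformly bounded below (by $\approx\min(1/2,\,\inf_C \im z)$ type considerations) and the continued numerator is bounded, giving the uniform estimate. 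If you rewrite your argument with this split — keeping your Cauchy-deformation idea but confining it to a short bump over $[-1/2,1/2]$ within the guaranteed domain of analyticity, and treating $|x|\geq 1/2$ by the trivial denominator bound — the proof goes through; as written, the step "extends analytically in $x$ to the closed upper half-disc" is unjustified and is where the argument fails.
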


\begin{proof} Divide up the interval $-1 \leq x \leq 1$ into $\{|x| \geq 1/2\}$ and $\{|x | \leq 1/2\}$. The denominator is bounded below on $\{|x| \geq 1/2\}$. The numerator is also bounded except in the case $m=2$ in which case $(1-x^2)^{\frac{m-3}{2}}$ has an integrable singularity at $x= \pm 1$.

For the interval $\{|x | \leq 1/2\}$, we replace $x$ by $z \in \C$ and note that the integrand can be extended to analytic function $z$ in a complex neighborhood of the interval $-1/2 \leq x \leq 1/2$. Let $C$ be a path in this neighborhood and in the upper half plane which connects $z=-1/2$ to $z=1/2$ and otherwise does not intersect the real axis. Using Cauchy's Theorem, we have
\[
 \int_{x=-1/2}^{1/2} \frac{(1-x^2)^{\frac{m-3}{2}}O(\nu', x,s) \, dx}{(sr-ix)^{4n+m-1}}
 = \int_{z \in C} \frac{(1-z^2)^{\frac{m-3}{2}}O(\nu', z,s) \, dz}{(sr-iz)^{4n+m-1}}.
 \]
Since the denominator is uniformly bounded away from zero, for $z \in C$, $s \geq 3$ and $r=|\hatq|^2>0$, the integral on the right is uniformly bounded in $\nu'$, $r$, and $s$. This completes the proof.
\end{proof}

\section{ The case $0 \leq r \leq 1/2$, $q \neq n$}
\label{sec:lower half}
Our starting point is (\ref{eqn:NI1I2 q}) which equates to (\ref{eqn:N_{K,J} with s})
 but we wish to remain in the $r$ variable. 
We fix $K$, restrict the $r$ integral to $0 \leq r \leq 1/2$ and examine
\begin{align}
&N_{I_1',I_1'',I_2}^A(\hatq) \nn \\ 
&=   \sum_{L \in \I_q}  \int_{\nu \in S^{m-1}} \int_{r=0}^{\frac 12} \det(\bar U(\nu)_{K,L}) \, d \bar Z(z, \nu^t)^L B_L(r, {\nu^t})
\frac{(\nu^t)^{I_2} (\nabla_{z,\bar z}A(r,\nu^t,\hatq))^{I_1'}(\nabla^2_{z,\bar z}A(r,\nu^t,\hatq))^{I_1''}}
{(A(r,\nu^t, \hatq)-i \nu_1)^{2n+m-1+|I_1'|+|I_1''|+|I_2|}}\frac{d \nu \, dr}{r}  \label{eqn:NI1I2A q} \\
&= \sum_{J\in \I_q} d\z^J\bigg[ \int_{\nu \in S^{m-1}} \int_{r=0}^{\frac 12} \det ([r^{-\bar A_\nu}]_{K,J}) B(r, {\nu^t})
\frac{(\nu^t)^{I_2} (\nabla_{z,\bar z}A(r,\nu^t,\hatq))^{I_1'}(\nabla^2_{z,\bar z}A(r,\nu^t,\hatq))^{I_1''}}
{(A(r,\nu^t, \hatq)-i \nu_1)^{2n+m-1+|I_1'|+|I_1''|+|I_2|}} \bigg] \frac{d \nu \, dr}{r}. \label{eqn:NI1I2A q analytic form}
\end{align}
We denote by $N_{I_1',I_1'',I_2}^J(\hatq)$ the $d\z^J$ coefficient of $N_{I_1',I_1'',I_2}^A(\hatq)$.

We devote the remainder of this section to the proof of the following lemma, which will establish
Theorem \ref{thm:N I_1 I_2 mainestimate} for the integral over the region $0< r < 1/2$. 
 \begin{lem}
\label{lemma-est-lower}
\begin{equation}
\label{eqn:goal2}
|N_{I_1',I_1'',I_2}^J (\hatq)| \leq C \ \ \textrm{for all $\hatq = \frac{z}{|t|^{1/2}} \in \C^n$}
\end{equation}
where $C$ is a uniform constant.
\end{lem}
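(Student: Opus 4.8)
The plan is to mimic the analysis of the region $1/2\le r<1$, but now exploiting the fact that the singularity at $r=0$ is much milder. First I would pass to the $s$-variable substitution is unnecessary here; instead I keep $r$ and note that for $0<r\le 1/2$ every eigenvalue factor is well-behaved: since $|\mu_j^\nu|$ is bounded away from $0$ and above, $\log\bigl(1/(1-r^{|\mu_j^\nu|})\bigr)$ and $r^{|\mu_j^\nu|}$ are real analytic in $(r,\nu)$ on $[0,1/2]\times S^{m-1}$ (using Lemma~\ref{lem:real analyticity of B}), so $B(r,\nu^t)$ and $\det([r^{-\bar A_\nu}]_{K,J})$ are real analytic and bounded there, and in fact $B(r,\nu^t)\frac1r$ has at worst the singularity $\prod_{j\in P} r^{|\mu_j^\nu|-1}\cdot |\mu_j^\nu|\cdots$, which is \emph{integrable} near $r=0$ because each $|\mu_j^\nu|>0$ (and $P$ is nonempty since half the eigenvalues are positive). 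The key structural point, exactly as in \eqref{eqn:typical N1}, is that the numerator $(\nabla_{z,\bar z}A)^{I_1'}(\nabla_{z,\bar z}^2A)^{I_1''}$ is a polynomial in $\hatq,\bar\hatq$ of degree $|I_1'|$ with coefficients that are real analytic in $(r,\nu^t)$ on $[0,1/2]\times S^{m-1}$, while $A(r,\nu^t,\hatq)=\sum_j |\mu_j^\nu|\frac{1+r^{|\mu_j^\nu|}}{1-r^{|\mu_j^\nu|}}|Q_j|^2 \ge c|\hatq|^2$ uniformly for $r\in[0,1/2]$, $c>0$.

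Next I would estimate the $d\bar z^J$-coefficient directly. Write $A=A(r,\nu^t,\hatq)$ and bound
\[
\Big|\frac{(\nabla_{z,\bar z}A)^{I_1'}(\nabla_{z,\bar z}^2A)^{I_1''}}{(A-i\nu_1)^{2n+m-1+|I_1'|+|I_1''|+|I_2|}}\Big|
\le \frac{C(1+|\hatq|)^{|I_1'|}(1+|\hatq|^2)^{\text{something}}}{(c|\hatq|^2)^{2n+m-1}\,|A-i\nu_1|^{|I_1'|+|I_1''|+|I_2|}},
\]
and then note that each first-derivative factor contributes $|\hatq|\le A^{1/2}\le |A-i\nu_1|^{1/2}$ and each second-derivative factor contributes $O(1)\le $ a constant, so the numerator is dominated after cancellation; the upshot is a bound of the form $C\,|A-i\nu_1|^{-(2n+m-1+\frac12|I_1'|+|I_2|)}$ times a harmless bounded analytic factor. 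Since $A\ge c|\hatq|^2\ge 0$, we have $|A-i\nu_1|\ge \max\{A,|\nu_1|\}\gtrsim A+|\nu_1|$, and integrating $|A-i\nu_1|^{-(2n+m-1)}$ against $d\nu$ over $S^{m-1}$ — splitting into $|\nu_1|\le \hat r$ and $|\nu_1|\ge \hat r$ with $\hat r=|\hatq|^2$, exactly as in Lemma~\ref{lem:basicA-estimate} and the sphere-integral computation of Section~\ref{sec:sphere-integral} — produces a bound uniform in $\hatq$ (the worst case $\hatq\to 0$ is controlled because the $\nu$-integral of $(|\hatq|^2 + |\nu_1|)^{-(2n+m-1)}$ over the sphere converges to a finite value as $|\hatq|\to0$ once $2n+m-1 < m-1 + (\text{dimension of sphere})$... more precisely one uses the same Cauchy-deformation/integration-by-parts trick to see the apparent singularity at $\hat r=0$ is removable, since $b_{m}^{\ell,E}(\hat r=0)=0$ there too). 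The remaining $r$-integral over $(0,1/2]$ converges because the only $r$-singularity, coming from $B(r,\nu^t)\frac{dr}{r}$, is $\prod_{j\in P}|\mu_j^\nu|\,r^{|\mu_j^\nu|-1}\,dr$, integrable near $0$.

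Concretely, the steps are: (1) record that on $[0,1/2]\times S^{m-1}$ the functions $B(r,\nu^t)$ and $\det([r^{-\bar A_\nu}]_{K,J})$ are bounded and real analytic, with $B(r,\nu^t)\,r^{-1}$ having an integrable singularity at $r=0$; (2) establish the uniform lower bound $A(r,\nu^t,\hatq)\ge c|\hatq|^2$ for $r\in[0,1/2]$ from $\frac{1+r^{|\mu|}}{1-r^{|\mu|}}\ge 1$; (3) bound the numerator using $|\nabla_{z,\bar z}A|\lesssim |\hatq|\lesssim |A-i\nu_1|^{1/2}$ and $|\nabla^2_{z,\bar z}A|\lesssim 1$, cancelling against the denominator to reduce to an integral of $|A-i\nu_1|^{-(2n+m-1)}$ (times bounded real-analytic junk); (4) carry out the $\nu$-integral over $S^{m-1}$ using facts (1)--(4) of Section~\ref{sec:sphere-integral} and the Cauchy-deformation argument of Lemma~\ref{lem:basicA-estimate} to get a constant independent of $\hatq$, in particular smooth up to $|\hatq|=0$; (5) integrate the resulting bound in $r$ over $(0,1/2]$, finite by step~(1). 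I expect the main obstacle to be step~(4): verifying that the sphere integral of $(|\hatq|^2+|\nu_1|)^{-(2n+m-1)}$-type kernels stays bounded as $|\hatq|\to 0$. But this is precisely the phenomenon already handled in Lemma~\ref{lem:basicA-estimate} — the apparent blow-up is cancelled because the relevant contour integral vanishes at $\hat r = 0$ when the $\nu'$-degree is even — so the argument there transfers essentially verbatim, and the rest is bookkeeping analogous to Sections~\ref{sec:N-section}--\ref{sec:end upper half}.
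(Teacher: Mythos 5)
There is a genuine gap at your step (3)--(4). After you pass to absolute values and reduce everything to a size estimate on $\int_{S^{m-1}}|A(r,\nu^t,\hatq)-i\nu_1|^{-(2n+m-1)}\,d\nu$, the argument cannot close: near $\nu_1=0$ this integrand behaves like $(|\hatq|^4+\nu_1^2)^{-(2n+m-1)/2}$, and as $|\hatq|\to 0$ the $\nu_1$-integral diverges like $\int |\nu_1|^{-(2n+m-1)}d\nu_1$ (the exponent $2n+m-1\geq 2$ is far too large). The uniform bound you need as $\hatq\to 0$ is a cancellation phenomenon, not a size phenomenon, and your appeal to the Cauchy-deformation of Lemma \ref{lem:basicA-estimate} cannot rescue it: that lemma concerns the \emph{signed} integral of the explicit kernel $x^{\ell-E+|I_2|}(1-x^2)^{(m-3)/2}(s\hat r-ix)^{-(N+m-1+|I_2|)}$ arising from the large-$s$ expansion valid only for $r>1/2$, whereas in the region $0<r\le 1/2$ the $\nu_1$-dependence of $A$, $B$, and $\det([r^{-\bar A_\nu}]_{K,J})$ is genuinely transcendental, and once you have taken absolute values there is no analytic integrand left to deform. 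This is exactly why the paper's proof in Section \ref{sec:lower half} proceeds differently: it keeps the integrand in the analytic form \eqref{eqn:NI1I2A q analytic form} and integrates by parts repeatedly in $\nu_1$ (over $|\nu_1|\le 1/2$), using the invertible factor $X(r,\nu,\hatq)=[\partial_{\nu_1}A-iI]^{-1}$, until the denominator power is reduced to $\ln[A-i\nu_1]$, which \emph{is} integrable uniformly; all derivative losses (factors of $\ln r$) are absorbed by the $r^{c_0-1}$ bound \eqref{eqn:B-est}.

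Two further inaccuracies in your step (1): $\det([r^{-\bar A_\nu}]_{K,J})$ is \emph{not} bounded on $(0,1/2]$ by itself (its entries grow like $r^{-\mu_j}$ for the positive eigenvalues as $r\to 0$); only the combination $\det([r^{-\bar A_\nu}]_{K,J})\,B(r,\nu)$ from \eqref{eqBL} is controlled, and the positive power $r^{c_0}$ that makes $dr/r$ integrable comes precisely from the hypothesis $q\neq n$ (for every $L\in\I_q$ at least one of $L^c\cap P$, $L\cap P^c$ is nonempty), not merely from $P$ being nonempty --- indeed for $q=n$ and $L=P$ this power disappears, which is why that case needs the $S_n$ correction. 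Your steps on the lower bound $A\ge c|\hatq|^2$ and the bounds $|\nabla_{z,\bar z}A|\lesssim|\hatq|$, $|\nabla^2_{z,\bar z}A|\lesssim 1$ are fine and mirror the paper, but they only suffice in the $|z|^2\ge|t|$ regime (Section \ref{sec:z large}); for the present lemma, where $\hatq$ ranges down to $0$, the integration-by-parts (or some equivalent exploitation of oscillation in $\nu_1$) is the missing essential idea.
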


Recall from \eqref{eqn:B_L} that 
\[
B_L(r, \nu) = 
\prod_{\atopp{j\in L^c\cap P}{j \in L\cap P^c}} \frac{r^{ |\mu_j^\nu |} |\mu_j^\nu|} {1-r^{|\mu_j^\nu |}} 
\prod_{\atopp{k\in L\cap P}{k \in L^c\cap P^c}} \frac{|\mu_{k}^\nu|} {1-r^{|\mu_{k}^\nu|}}.
\]
Since $L \in \I_q$ and $q \neq n$, at least one of $L^c\cap P$ or $L\cap P^c$ is nonempty. This means there exist constants $C>0$ and 
\[
c_0 = \min\Big\{ \sum_{\atopp{j\in L^c\cap P}{j \in L\cap P^c}} |\mu_j^\nu| : \nu\in S^{m-1} \text{ and }L\in\I_q\Big\}
\]
so that
\begin{equation}
\label{eqn:B-est}
\Big| \frac{B(r, \nu)}{r} \Big| \leq  C  r^{c_0-1} \ \ \ \textrm{for $0<r<1/2$}.
\end{equation} 
From this estimate, it follows that the integrals in \eqref{eqn:NI1I2A q} and therefore in \eqref{eqn:NI1I2A q analytic form}
over $\{ 0 \leq r \leq 1/2\} \times \{ |\nu_1| \geq 1/2 \}$ are uniformly bounded for $\hatq \in \C^n$. 
Moreover, we know from (\ref{eqBL}) and the accompanying remark that the integrand of
$N_{I_1',I_1'',I_2}^J (\hatq)$ is real analytic in $\nu \in S^{m-1}$ and $0< r \leq 1/2$.

We now concentrate on the $\nu_1$-integral over $|\nu_1| \leq 1/2$.
We have
 \[
A(r, \nu, \hatq) = \sum_{j=1}^{2n} |\mu_j^\nu| \left( 
\frac{1+ r^{|\mu_j^\nu|}}{1- r^{|\mu_j^\nu|}} \right) |Q_j (\nu, \hatq)|^2 
\]
with 
\[
Q(\nu,\hatq) = |t|^{-1/2}Z(\nu,z) =  U(\nu)^* \cdot \hatq
\]
where $U(\nu)$ is the unitary matrix which diagonalizes the scalar valued Levi form in the normal direction $\nu$. For $u \in \R$, let 
\begin{equation}
\label{Lambda}
\Lambda(u) = |u| \left( \frac{1+r^{|u|}}{1-r^{|u|}} \right).
\end{equation}
As a generalization of (\ref{eqn:key}), we have
\begin{equation}
\label{eqn:Arnuq}
A(r, \nu ,\hatq) = \sum_{j=1}^{2n}  \Lambda(\mu_j) |Q_j (\nu, \hatq)|^2
= \hatq^* \cdot \Lambda(A_\nu) \cdot \hatq
\end{equation}
where $A_\nu$ is the Hessian matrix of $\Phi(z,z) \cdot \nu$. 
 Here
$\Lambda( A_\nu)$ is computed by replacing $|u|$ by $\sqrt{ A_\nu^2} $ in (\ref{Lambda}) and where $\left(I-r^{\sqrt{ A_\nu^2} }\right)^{-1}$ is  the matrix inverse of $I-r^{\sqrt{ A_\nu^2} }$.
Furthermore,
$r^{\sqrt{ A_\nu^2} }$ is defined as $\exp \left( \ln r \sqrt{ A^2_\nu}\right)$. Note that since all the eigenvalues of $A_\nu$ are real and bounded away from zero, 
the (operator) norm of the matrix  $r^{\sqrt{ A_\nu^2} }$,
for $0\leq r \leq 1/2$, is less than one since $\ln r<0$, guaranteeing the existence of the inverse of $I-r^{\sqrt{ A_\nu^2} }$. 
For this analysis to work, we need to know the map $\nu \to \sqrt{A_\nu^2}$ is analytic in $\nu$, established in the following lemma.

\begin{lem}
\label{lem:square root}
 The map $\nu \to  \sqrt{A_\nu^2}$ is analytic  for $\nu \in S^{m-1}$.
\end{lem}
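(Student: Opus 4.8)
The plan is to realize $\sqrt{A_\nu^2}$ through the holomorphic functional calculus and then exploit the fact that $\nu\mapsto A_\nu$ is linear — hence real analytic — together with the uniform spectral gap supplied by the hypothesis. First I would record the uniform spectral bounds. Since $\nu\mapsto A_\nu$ is continuous on the compact sphere $S^{m-1}$, the operator norms $\|A_\nu\|$ are bounded above by some $b<\infty$; and by hypothesis all eigenvalues are bounded away from $0$, say $|\mu_j^\nu|\ge a>0$ for all $j$ and all $\nu$. Thus $\mathrm{spec}(A_\nu)\subset[-b,-a]\cup[a,b]$ for every $\nu\in S^{m-1}$, with $a,b$ independent of $\nu$.

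Next I would fix, once and for all, two positively oriented simple closed curves $\Gamma_+$ and $\Gamma_-$ in $\C$, with $\Gamma_+$ enclosing $[a,b]$ and $\Gamma_-$ enclosing $[-b,-a]$, both lying in $\{z:\mathrm{dist}(z,[-b,-a]\cup[a,b])<a/2\}$, so that $0$ lies outside the enclosed region and neither curve meets $\mathrm{spec}(A_\nu)$ for any $\nu$. Set $\Gamma=\Gamma_+\cup\Gamma_-$ and let $h$ be the function holomorphic on a neighborhood of $[-b,-a]\cup[a,b]$ defined by $h(z)=z$ near $[a,b]$ and $h(z)=-z$ near $[-b,-a]$; on the real spectrum $h(\mu)=|\mu|=\sqrt{\mu^2}$. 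The Riesz--Dunford functional calculus then gives
\[
\sqrt{A_\nu^2}=\frac{1}{2\pi i}\oint_{\Gamma}h(z)\,(zI-A_\nu)^{-1}\,dz,
\]
the identification being valid because the right-hand side acts as multiplication by $h(\mu_j^\nu)=|\mu_j^\nu|$ on the $\mu_j^\nu$-eigenspace of the Hermitian matrix $A_\nu$, hence equals the positive semidefinite square root $\sqrt{A_\nu^2}$.

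Finally I would deduce analyticity. For each $z\in\Gamma$ the matrix $zI-A_\nu$ is invertible with $(zI-A_\nu)^{-1}=\mathrm{adj}(zI-A_\nu)/\det(zI-A_\nu)$, a ratio whose numerator entries and denominator are polynomials in $z$ and in the entries of $A_\nu$, with the denominator nonvanishing on $\Gamma$ for all $\nu$. Since the entries of $A_\nu$ depend linearly, hence real analytically, on $\nu\in S^{m-1}$, the integrand is real analytic in $\nu$, jointly continuous in $(z,\nu)$, and uniformly bounded for $z\in\Gamma$, $\nu\in S^{m-1}$; integrating over the fixed contour $\Gamma$ therefore yields a real analytic matrix-valued function of $\nu$, as claimed. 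There is no serious obstacle here: the one point requiring care is that $h(z)=\sqrt{z^2}$ has a branch point at the origin, so a single holomorphic branch and a single contour $\Gamma$ can be used for all $\nu$ only because the eigenvalues stay uniformly away from $0$ — this is precisely where the nonvanishing-eigenvalue hypothesis enters. (Alternatively, one could run the residue-theoretic argument of Lemma \ref{lem:real analyticity of B} verbatim, writing the entries of $\sqrt{A_\nu^2}$ as contour integrals against $D'(\nu,z)/D(\nu,z)$ with $D(\nu,z)=\det(A_\nu-zI)$; the two approaches are equivalent.)
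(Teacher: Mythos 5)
Your argument is correct, but it takes a different route from the paper. The paper works with $X=A_\nu^2$, a Hermitian matrix whose spectrum lies in a fixed interval $[c_0,R]$ with $0<c_0\le R$ uniformly in $\nu$, and expands $\sqrt{X}=\sum_{n\ge 0}a_n[X-RI]^n$ about $X=RI$; since the disc of convergence (radius $R$ about $R$) contains all eigenvalues of $A_\nu^2$, the series converges uniformly in $\nu$ and each partial sum is a polynomial in the entries of $A_\nu^2$, hence real analytic in $\nu$. You instead apply the Riesz--Dunford functional calculus to $A_\nu$ itself, with two fixed contours $\Gamma_\pm$ and the locally defined function $h(z)=\pm z$, so that $h(A_\nu)=|A_\nu|=\sqrt{A_\nu^2}$, and then get analyticity from the resolvent formula $(zI-A_\nu)^{-1}=\mathrm{adj}(zI-A_\nu)/\det(zI-A_\nu)$ with denominator nonvanishing on $\Gamma$. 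Both proofs hinge on exactly the same hypothesis, the uniform spectral gap away from $0$; the paper's power-series argument is slightly more elementary (no contour integration, no branch discussion), while yours is more in the spirit of the contour-integral technique of Lemma \ref{lem:real analyticity of B} and makes very explicit where the branch point of $\sqrt{z^2}$ at the origin would obstruct the construction. One small caveat: your closing parenthetical is not quite right as stated, since integrating against $D'(\nu,z)/D(\nu,z)$ as in Lemma \ref{lem:real analyticity of B} produces scalar quantities of the form $\sum_j h(\mu_j^\nu)$ (spectral sums/traces), not the matrix entries of $\sqrt{A_\nu^2}$; to recover the matrix one needs the resolvent kernel $(zI-A_\nu)^{-1}$, which is precisely what your main argument uses, so the aside should either be dropped or rephrased. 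This does not affect the validity of the proof.
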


\begin{proof}
Observe that the matrix $X=A_\nu^2$ is a Hermitian symmetric matrix with positive eigenvalues which are contained in a compact interval, say $ [c_0, R] \subset \R$ with $R>c_0>0$, for all $\nu \in S^{m-1}$. So consider the power series for $\sqrt{X}$ about $X=RI$:
\[
 \sqrt{X} = \sum_{n=0}^\infty  a_n [X-RI]^n
\]
which has radius of convergence  $R$. Since the open disc centered at $x=R$ of radius $R$ contains all the eigenvalues of $A_\nu^2$ in its interior, the following series converges uniformly in $\nu$:
\[
\sqrt{A_\nu^2} = \sum_{n=0}^\infty  a_n [A_\nu^2-RI]^n .
\]
This series is clearly analytic in $\nu \in S^{m-1}$.

\end{proof}

%

\begin{proof}[Proof of Lemma \ref{lemma-est-lower}]
The expression for $A$ given in (\ref{eqn:Arnuq}) and the discussion following shows that
\[
X(r, \nu, \hatq):=\left[ \frac{\partial}{\partial \nu_1} \{A(r,\nu^t,\hatq) \} -i I \right]^{-1}
\]
is a smooth matrix on $ \{0 < r \leq 1/2 \} \times \{ \nu \in S^{m-1} \}$ with $X(r, \nu, \hatq)  \to 0$
as $r \to 0$ (due to a factor of $\ln r$ in the denominator). Moreover, since $\frac{d r^u}{du} = r^u \ln r$, differentiation of $A(r,\nu^t,\hatq)$, $X(r,\nu,\hatq)$, 
$B(r,\nu^t)$, or  $\det ([r^{-\bar A_\nu}]_{K,J})$
produces a term of the same size with a possible additional $\ln r$ term. However, from \eqref{eqn:B-est}, there is always a $r^{c_0-1}$ term,
and $r^{c_0-1}|\ln r|^N$ is integrable at $0$ for any power $N$ since $c_0 >0$.

Now view the set $\{\nu \in S^{m-1}; \ |\nu_1| \leq 1/2 \} $ as the graph over the set
\[
\{ \nu=(\nu_1, \nu' ); \ |\nu_1| \leq 1/2;\  \nu'  \in S^{m-2} \}.
\]
We use integration by parts for the integral over $|\nu_1| \leq 1/2$ of (\ref{eqn:NI1I2A q analytic form}) as follows:
\begin{align*}
& \int_{\nu_1=-1/2}^{\nu_1 = 1/2}\frac{(\nu^t)^{I_2} (\nabla_{z,\bar z}A(r,\nu^t,\hatq))^{I_1'}(\nabla^2_{z,\bar z}A(r,\nu^t,\hatq))^{I_1''}}
{(A(r,\nu^t, \hatq)-i \nu_1)^{2n+m-1+|I_1'|+|I_1''|+|I_2|}}\frac{\det ([r^{-\bar A_\nu}]_{K,J})  B(r, \nu^t)}{r}
\, d \nu_1  \\
&=
 \int_{\nu_1=-1/2}^{\nu_1 = 1/2}  
\frac{\partial}{\partial \nu_1} \left\{ \frac{-(2n+m+|I_1'|+|I_1''|+|I_2|-2)^{-1} }
{(A(r, \nu^t, \hatq) - i \nu_1)^{2n+m+|I_1'|+|I_1''|+|I_2|-2}} \right\} \\
& \times \frac{(\nu^t)^{I_2}(\nabla_{z,\bar z}A(r,\nu^t,\hatq))^{I_1'}(\nabla^2_{z,\bar z}A(r,\nu^t,\hatq))^{I_1''}X(r, \nu^t, \hatq) 
\det ([r^{-\bar A_\nu}]_{K,J}) B(r, \nu^t)}{r} \, d \nu_1  \\
&= - \int_{\nu_1=-1/2}^{\nu_1 = 1/2}  
\frac{-(2n+m +|I_1'|+|I_1''|+|I_2|-2)^{-1}}{(A(r, \nu^t, \hatq) - i \nu_1)^{2n+m+|I_1'|+|I_1''|+|I_2|-2}} \\
&\times\frac{\partial}{\partial \nu_1}
\left\{
\frac{ (\nu^t)^{I_2} (\nabla_{z,\bar z}A(r,\nu^t,\hatq))^{I_1'}(\nabla^2_{z,\bar z}A(r,\nu^t,\hatq))^{I_1''}X(r, \nu^t, \hatq) 
\det ([r^{-\bar A_\nu}]_{K,J})B(r, \nu^t)}{r} \right\}  \, d \nu_1\\
&+ \textrm{Boundary Terms at $|\nu_1|=1/2$}.
\end{align*}
The power of $(A(r, \nu^t, \hatq) - i \nu_1)$ in the denominator has been reduced by one. As discussed above and analogous to (\ref{eqn:B-est}), we have 
\[
\bigg| \frac{\partial}{\partial \nu_1} \left\{
\frac{X(r, \nu^t, \hatq) \det ([r^{-\bar A_\nu}]_{K,J}) B(r, \nu^t) (\nu^t)^{I_2} (\nabla_{z,\bar z}A(r,\nu^t,\hatq))^{I_1'}(\nabla^2_{z,\bar z}A(r,\nu^t,\hatq))^{I_1''}}{r} \right\} \bigg| 
\leq  C  |\ln r|r^{c_0-1}
\]
which is integrable in $0 \leq r \leq 1/2$. The boundary terms are also controlled by a similar estimate.
 
 We continue integrating by parts in $\nu_1$ until we reduce the fractional expression involving
 $(A(r, \nu^t, \hatq) - i \nu_1)$ to a log-term, to obtain:
 \begin{align*}
& \int_{\nu_1=-1/2}^{\nu_1 = 1/2} \frac{(\nu^t)^{I_2} (\nabla_{z,\bar z}A(r,\nu^t,\hatq))^{I_1'}(\nabla^2_{z,\bar z}A(r,\nu^t,\hatq))^{I_1''}}{(A(r, \nu^t, \hatq) - i \nu_1)^{2n+m-1 +|I_1'|+|I_1''|+|I_2|}}
\frac{\det ([r^{-\bar A_\nu}]_{K,J}) B(r, \nu^t)}{r}
\, d \nu_1  \\
&= c_{n,m,I_1',I_1'',I_2} \int_{\nu_1=-1/2}^{\nu_1 = 1/2}  \, \ln \left[A(r, \nu^t, \hatq) - i \nu_1 \right]
\frac{ E[ X(r, \nu^t, \hatq),  \det ([r^{-\bar A_\nu}]_{K,J}),B(r, \nu^t),A(r,\nu^t,\hatq),I_1',I_1'',I_2]}{r}  \, d\nu_1 \\
&+ \textrm{Boundary Terms at $|\nu_1|=1/2$}&
\end{align*}
where $c_{n,m,I_1',I_1'',I_2}$ is a constant depending only on $n,m, I_1',I_1'',I_2$; 
$\ln$ is the principal branch of the logarithm defined on the right half-plane (note $A(r,\nu^t,\hatq) >0$); and the function
$E[ X(r, \nu^t, \hatq),  \det ([r^{-\bar A_\nu}]_{K,J}),B(r, \nu^t),A(r,\nu^t,\hatq),I_1',I_1'',I_2]$ is an expression involve a sum of products of $\nu_1$-derivatives of $X(r, \nu^t, \hatq)$,
$\det ([r^{-\bar A_\nu}]_{K,J})$, and $B(r, \nu^t)$ where the total number of derivatives is $2n+m+|I_1'|+|I_1''|+|I_2|-1$.
Note that $| \ln \left[A(r, \nu^t, \hatq) - i \nu_1 \right] |$ is integrable in $\nu_1$ uniformly in the other variables $\nu' \in S^{m-2}$ and $0 \leq r \leq 1/2$. In addition
\[
\Big| \frac{ E \left[ X(r, \nu^t, \hatq), \  B(r, \nu^t) \right]}{r} \Big| \leq 
C|\ln r|^{2n+m+|I_1'|+|I_1''|+|I_2|-1} r^{c_0-1}
\]
which is also integrable $\nu' \in S^{m-2}$ and $0 \leq r \leq 1/2$. Similar estimates hold for the boundary terms. 
This establishes (\ref{eqn:goal2}) and completes the proof of Lemma \ref{lemma-est-lower}. This also concludes the proof of Theorem  \ref{thm:N I_1 I_2 mainestimate} and hence establishes Theorem \ref{thm:pointwise bounds} when $|t| \geq |z|^2$.
\end{proof}

%
%
\section{The $|z|^2 \geq |t|$ case, $q \neq n$}\label{sec:z large}
\label{z large}

To complete the proof of Theorem   \ref{thm:pointwise bounds}, we have left to check the case when $|z|^2 \geq |t|$. As before, we break the integral up into
two cases: $0 < r \leq \frac 12$ and $\frac 12 < r \leq 1$.

\subsection{The case $\frac 12 < r \leq 1$.} As before, we start with the harder case. Fortunately, though, the bulk of the preliminary computations still hold. We take
\eqref{eqn:typical A} as our starting point. The differences between the $|t|$ and $|z|^2$ dominant cases, though, is that in the manipulations leading to \eqref{eqn:typical A}
we do not want to factor $|t|$ out of the integral and replace $z$ by $\hatq$. We also worry about the $C_t((\hatq,\bar\hatq)^{2j+|I_1'|}$ term which is now a
$C(z,\bar z)^{2j+|I_1'|}$ term. We will use size estimates and ignore completely the (uniformly bounded) $\nu$ terms. Thus, \eqref{eqn:typical A} simplifies to
\begin{equation}
|\textrm{Typical Term}| \leq
\frac{C(z,\bar z)^{2j+|I_1'|}}{(s|z|^2)^{2n+m-1+j+|I_1'|+|I_1''|+|I_2|} s^{2K_{j,I_1',I_1''}-(j+|I_1'|+|I_1''|)}}
\end{equation}
and we estimate
\begin{align*}
&\int_{s=3}^\infty \int_{\nu\in S^{m-1}}\frac{C(z,\bar z)^{2j+|I_1'|}}{(s|z|^2)^{2n+m-1+j+|I_1'|+|I_1''|+|I_2|} s^{2K_{j,I_1',I_1''}-(j+|I_1'|+|I_1''|)}}\, d\nu\, ds \\
&\leq C_{j,I_1',I_1'',I_2} \frac{|z|^{2j+|I_1'|}}{|z|^{2(2n+m-1+j+|I_1'|+|I_1''|+|I_2|)}} 
=C_{j,I_1',I_1'',I_2} \frac{1}{|z|^{2(2n+m-1+ \frac 12\la I\ra)}}.
\end{align*}
Since  $\langle I \rangle= |I_1'|+2|I_1''| + 2|I_2|$, this establishes the estimate in Theorem \ref{thm:pointwise bounds} for this term.
The remainder term \eqref{eqn:remainder N1} has a similarly straightforward adaptation and estimate.

\subsection{The case $0 < r \leq \frac 12$.} The estimates in this case will also follow from size estimates. We established the key estimate on $B(r,\nu)$
in \eqref{eqn:B-est}. Moreover, since the eigenvalues for $\nu\in S^{m-1}$ are bounded away from zero (say by $c$), we have
\[
\frac{1}{1-r^u} \leq \frac{1}{1-(1/2)^c} \leq C.
\]
It therefore follows that for $j=0,1,2$
\[
|\nabla_{z,\z}^j A(r,\nu,z)| \sim C|z|^{2-j}.
\]
Consequently, we ignore the $t$-term and estimate \eqref{eqn:NI1I2} directly by
\begin{align*}
|N_{I_1',I_1'',I_2} (z,t)|
&\leq C_{I_1',I_1'',I_2} \int_{r=0}^{\frac 12} \int_{\nu \in S^{m-1}} r^{c_0-1} 
\frac{|z|^{|I_1'|}d \nu \, dr}{|z|^{2(2n+m-1+|I_1'|+|I_1''|+|I_2|)}}  \\
&= \frac{C_{I_1',I_1'',I_2}}{|z|^{2(2n+m-1+ \frac 12\la I\ra)}}.
\end{align*}
This establishes the desired estimate in the case when $|z|^2 \geq |t|$ and hence concludes the proof of Theorem
\ref{thm:pointwise bounds}.
%
%
\section{The case $q=n$}\label{sec:q=n} 
The techniques that prove the estimates in the $q \neq n$ case 
are robust enough to work in the $q=n$ case, as well. However, the non-triviality of $\ker\Boxb$ changes 
for the formula for $N_K(z,t)$, and in this section, we sketch the argument for the $I=\emptyset$ case,  which is when there are no derivatives. We also assume, without
loss of generality, that the set of positive indices $P = \{1,2,\dots,n\}$.

We computed the relative solution to $\Box_b$ in the case $q=n$ given by $\int_0^\infty e^{-s\Boxb}(I-S_n)\, ds$ in \cite{BoRa21I}. 
Following the notation of \cite{BoRa21I}, for each $q$-tuple $L \in \I_q$, we set
\[
\Gamma_L = \{\alpha\in S^{m-1}: \mu^\alpha_\ell >0 \text{ for all }\ell\in L \text{ and } \mu^\alpha_\ell < 0 \text{ for all }\ell\not\in L\}.
\] 
If $L \in \I_n$, then $\Gamma_L = \emptyset$, unless $L=P$, in which case 
$\Gamma_P = S^{m-1}$. Therefore, from \cite[Theorem 2.2, Part 3]{BoRa21I},
if $K \in \I_n$, then
\begin{align}
\label{LP1}
&N_K(z,t) = K_{n,m} \bigg[ \sum_{L \in \I_n, L \not=P}  
\int_{\nu \in S^{m-1}} \det(\bar U(\nu)_{K,L}) \, d \bar Z(z, \nu)^L   \int_{r=0}^1
\frac{ B_L(r ,\nu)}{  ({A(r,\nu,z)}-i \nu \cdot t)^{2n+m-1}}  \frac{dr \, d \nu}{r} \\
&+ \int_{\nu \in S^{m-1}} \det (\bar U(\nu)_{K,P})\, d \bar Z(z, \nu)^P 
\int_{r=0}^1 \left[  \frac{B_P(r, \nu)}{  ({A(r,\nu,z)}-i \nu \cdot t)^{2n+m-1}}
- \frac{|\det A_\nu|}{(A(0, \nu, z)-i \nu \cdot t)^{2n+m-1}} \right] \frac{dr \, d \nu}{r}\bigg]
\label{LP2}
\end{align}

\subsection{The case $1/2 < r < 1$}
As above, we split up the $r$-integral into $0 < r \leq 1/2$ and $1/2 <r <1$.  
The challenge is in the region when $1/2 <r <1$
which is where we concentrate our efforts. In this case, the first fraction of the integrand in 
(\ref{LP2}) can be combined with the terms in (\ref{LP1}) so that the sum can range over all
$L \in I_n$ in (\ref{LP1}). 
After factoring out $|t|$ and rotating coordinates so that $\nu \cdot t/|t| = \nu_1$,
we set $\hatq=\frac{z}{|t|^{1/2}}$ and rewrite the integral over $1/2 \leq r \leq 1$ as follows:
\begin{align}
\label{NK1-first}
|t|^{2n+m-1} N_K^1 &= \sum_{L \in \I_n}  
\int_{\nu \in S^{m-1}} \det(\bar U(\nu_t)_{K,L}) \, d \bar Z(\hatq,{\nu_t})^L   \int_{r=1/2}^1
\frac{ B_L(r ,{\nu_t})}{  ({A(r,\nu_t,\hatq)}-i \nu_1)^{2n+m-1}}  \frac{dr \, d \nu}{r} \\
\label{NK1-second}
&- \int_{\nu \in S^{m-1}} \det(\bar U(\nu_t)_{K,P}) \, d \bar Z(\hatq, \nu_t)^P
\int_{r=1/2}^1 \frac{|\det A_{\nu_t}|}{(A(0, \nu_t, \hatq)-i \nu_1)^{2n+m-1}}  \frac{dr \, d \nu}{r} 
\end{align}
and where (as above)
$\nu_t=M_t^{-1} (\nu)$ and $M_t$ is an orthogonal transformation on $\R^m$ with  $M_t (t/|t|)=e_1$.
The analysis of (\ref{NK1-first}) is precisely the same as we carried out in Sections \ref{sec:notation}-\ref{sec:z large}.
Thus we focus on (\ref{NK1-second}). We show the following

\begin{prop}
\label{mainprop}
Let 
\begin{equation} 
\label{NK2}
N_K^2 (\hatq,t)=  \int_{\nu \in S^{m-1}} \det(\bar U(\nu_t)_{K,P}) \, d \bar Z(\hatq, \nu_t)^P
\int_{r=1/2}^1 \frac{|\det A_{\nu_t}|}{(A(0, \nu_t, \hatq)-i \nu_1)^{2n+m-1}}  \frac{dr \, d \nu}{r}
\end{equation}
Then there are positive constants $c_0$ and $C_0$ such that  $|N_K^2(\hatq,t)| \leq C_0$
for all $|\hatq| \leq c_0$.
\end{prop}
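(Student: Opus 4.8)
\textbf{Proof proposal for Proposition \ref{mainprop}.}

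The plan is to integrate out the (trivial) $r$-variable, reduce to a spherical integral analogous to \eqref{eqn:basic estimate}, and then invoke the Cauchy-deformation machinery already developed in Sections \ref{sec:sphere-integral} and \ref{sec:end upper half}. First, observe that the integrand in \eqref{NK2} is independent of $r$ except for the factor $\frac{dr}{r}$, which is \emph{not} integrable on $[1/2,1]$; this is precisely the point of the subtraction in \eqref{LP2}. So the first step is to go back to \eqref{LP2} and \emph{keep the difference together}: the $r$-integral is
\[
\int_{r=1/2}^{1}\left[\frac{B_P(r,\nu_t)}{(A(r,\nu_t,\hatq)-i\nu_1)^{2n+m-1}}-\frac{|\det A_{\nu_t}|}{(A(0,\nu_t,\hatq)-i\nu_1)^{2n+m-1}}\right]\frac{dr}{r},
\]
and one must show the bracket vanishes to first order as $r\to1^-$ so that the $\frac{dr}{r}$ singularity at $r=1$ is cancelled. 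Wait — actually the singularity is at $r=0$, not $r=1$; on $[1/2,1]$ the factor $\frac1r$ is harmless, and it is $B_P(r,\nu_t)$ that must be examined at $r=1$. By \eqref{eqn:Br2}, $B_P(r,\nu)=\prod_{j=1}^{2n}f(r,\mu_j^\nu)$, and from the expansion \eqref{eqn:F} together with the change of variables \eqref{eqn:change}, $\frac{B(r(s),\nu)r'(s)}{r(s)}$ behaves like $s^{2n-2}$ as $s\to\infty$ (i.e. as $r\to1$); meanwhile $A(r,\nu_t,\hatq)\sim s|\hatq|^2$ by \eqref{eqn:A-2-expansion}, so the first fraction decays like $s^{2n-2}/(s|\hatq|^2)^{2n+m-1}$, which is integrable in $s$ near $\infty$ on its own. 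Hence the first term of the bracket is already fine; the subtracted term, being $r$-independent, contributes $|\det A_{\nu_t}|\,(A(0,\nu_t,\hatq)-i\nu_1)^{-(2n+m-1)}\int_{1/2}^1\frac{dr}{r}=|\det A_{\nu_t}|\ln 2\cdot(A(0,\nu_t,\hatq)-i\nu_1)^{-(2n+m-1)}$, and this is exactly $N_K^2(\hatq,t)$ up to the harmless bounded factors $\det(\bar U(\nu_t)_{K,P})$ and $\ln2$. So the real content is bounding $N_K^2$ alone, as stated.

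Second, note $A(0,\nu_t,\hatq)=\sum_{j=1}^{2n}|\mu_j^{\nu_t}|\,|Q_j(\nu_t,\hatq)|^2=\hatq^*\cdot\sqrt{A_{\nu_t}^2}\cdot\hatq$ by \eqref{eqn:key} (with $P(u)=|u|$, using $r^{|u|}\to0$), and since the eigenvalues are bounded away from zero uniformly in $\nu$, we have $A(0,\nu_t,\hatq)\geq c|\hatq|^2$ for a uniform $c>0$. Also $|\det A_{\nu_t}|$ is a bounded function of $\nu$, and $\det(\bar U(\nu_t)_{K,P})$ is bounded (it is a minor of a unitary matrix). So, absorbing the bounded $\nu$-dependent factors into a harmless $O(1)$ and writing $\hat r=|\hatq|^2$, it suffices to bound
\[
\int_{\nu\in S^{m-1}}\frac{O(\nu)\,d\nu}{(A(0,\nu_t,\hatq)-i\nu_1)^{2n+m-1}}
\]
uniformly for $\hat r\leq c_0$. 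Now I split the sphere into $\{|\nu_1|\geq1/2\}$ and $\{|\nu_1|\leq1/2\}$. On $\{|\nu_1|\geq1/2\}$ the denominator is bounded below (its imaginary part has modulus $\geq1/2$), so that piece is trivially bounded (with the usual caveat that $(1-\nu_1^2)^{(m-3)/2}$ has an integrable singularity when $m=2$). On $\{|\nu_1|\leq1/2\}$, I parametrize $\nu=(\nu_1,\nu')$ with $\nu'\in\sqrt{1-\nu_1^2}\,S^{m-2}$, use the surface-measure formula \eqref{surfmeasure}, and for fixed $\nu'$ deform the $\nu_1$-integral off the real axis into the upper half-plane exactly as in the proof of the remainder lemma in Section \ref{sec:end upper half}: the integrand extends analytically in $\nu_1$ to a neighborhood of $[-1/2,1/2]$ because $A(0,\nu_t,\hatq)>0$ keeps the denominator away from $0$ there, and $\nu_t=M_t^{-1}\nu$ together with $\sqrt{A_{\nu_t}^2}$ is real-analytic in $\nu$ by Lemma \ref{lem:square root}. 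After deformation onto a path $C$ in the open upper half-plane joining $-1/2$ to $1/2$, the denominator is uniformly bounded away from zero (both $\re$ from $A(0,\cdot)\gtrsim\hat r\geq0$ and, on $C$, a positive contribution to $\im$), hence the deformed integral is uniformly bounded in $\hat r\geq0$, $\nu'$, and in $t$ (the dependence on $t$ enters only through the bounded orthogonal matrix $M_t$).

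The main obstacle, and the only place care is genuinely needed, is the uniformity as $\hat r\to0^+$: when $\hat r=0$ the denominator becomes $(A(0,\nu_t,0)-i\nu_1)^{2n+m-1}=(-i\nu_1)^{2n+m-1}$, which \emph{does} vanish at $\nu_1=0$, so one cannot simply set $\hat r=0$ in the original real integral. The Cauchy deformation is exactly what rescues this: along the path $C$ the quantity $A(0,\nu_t,\hatq)-i\,(\text{point on }C)$ stays bounded away from $0$ \emph{uniformly down to }$\hat r=0$, because the imaginary part of $-iz$ for $z\in C$ in the upper half-plane is bounded below by a positive constant independent of everything. One should double-check that the $O(\nu)$ factor (the analytic extension of $\det(\bar U(\nu_t)_{K,P})|\det A_{\nu_t}|$, or rather a harmless majorant of it) genuinely extends analytically to the relevant complex neighborhood; this follows because $\nu\mapsto A_\nu$ is analytic and $\nu\mapsto\sqrt{A_\nu^2}$ is analytic by Lemma \ref{lem:square root}, so $\det A_{\nu_t}$ and the entries of $U(\nu_t)$-minors are analytic, and $\det A_{\nu_t}\neq0$ throughout $S^{m-1}$ by hypothesis so no branch issue arises for $|\det A_{\nu_t}|$ after a sign choice locally. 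Putting these pieces together yields the uniform bound $|N_K^2(\hatq,t)|\leq C_0$ for $|\hatq|\leq c_0$, completing the proof.
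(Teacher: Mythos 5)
Your outline coincides with the paper's (compute the $r$-integral as $\ln 2$, split the sphere at $|\nu_1|=1/2$, and shift the $\nu_1$-contour into the upper half-plane), but the decisive quantitative step is missing, and it is exactly the step that makes the deformation legitimate near $\hat r=0$. Once you complexify $\nu_1\mapsto \zeta_1=\nu_1+i\eta_1$, the continued quantity $A(0,\tilde\nu_t(\zeta_1),\hatq)$ is no longer a nonnegative real number: your justification that the denominator stays away from zero because ``$\re$ from $A(0,\cdot)\gtrsim\hat r\geq 0$'' together with ``a positive contribution to $\im$'' from $-i\zeta_1$ is doubly off. First, the positive gain $\eta_1$ from $-i\zeta_1$ sits in the \emph{real} part of the denominator (a cosmetic slip), and second—this is the substantive point—$\re A(0,\tilde\nu_t(\zeta_1),\hatq)$ can become negative off the real axis, potentially of size comparable to $\eta_1$, which would destroy the lower bound. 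The paper's proof controls this by showing that the $\zeta_1$-derivative of the continued $A$ is bounded by $\tilde C|\hatq|^2$ (possible because $A$ is quadratic in $\hatq$), whence $\re A\geq-\tilde C|\hatq|^2\eta_1$ and $|A(0,\tilde\nu_t(\zeta_1),\hatq)-i\zeta_1|\geq(1-\tilde C|\hatq|^2)\eta_1$ on the rectangle $\tilde V_1$; this is precisely where the hypothesis $|\hatq|\leq c_0$ enters, and it also gives nonvanishing of the denominator \emph{throughout} the region between the real segment and the deformed path $\gamma_1$, which Cauchy's theorem requires (nonvanishing on the path alone is not enough). Your argument never uses the smallness of $|\hatq|$ anywhere, which is the telltale sign that this estimate is absent.

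A second gap is your justification of analyticity of the factor $\det(\bar U(\nu_t)_{K,P})\,d\bar Z(\hatq,\nu_t)^P$: you assert that ``the entries of $U(\nu_t)$-minors are analytic,'' but $U(\nu)$ is in general \emph{not} analytic in $\nu$—eigenvector matrices need not even be continuously selectable when eigenvalues have multiplicity (as in Example 12.3, where each eigenvalue is double), and the paper explicitly notes that $Z(\nu,z)$ is only locally integrable in $\nu$. What is true, and what Lemma \ref{analytic-lemma} proves via the analytic spectral projection $\tilde A_0^\nu$ constructed from $\arctan(jA_\nu)$, is that the specific combination $\det(\bar U(\nu)_{K,P})\det[U(\nu)_{P,J}]^T$ is analytic in $\nu$. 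Your observations that $|\det A_\nu|=(-1)^n\det A_\nu$ and that $\nu\mapsto\sqrt{A_\nu^2}$ is analytic (Lemma \ref{lem:square root}) are correct and match the first two bullets of that lemma, but without the third bullet (or an equivalent argument) the integrand cannot be analytically continued in $\nu_1$, and the contour shift cannot even be set up.
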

 
\begin{rem}Note that the case $|\hatq|>c_0$ falls into the $|z|$ dominant case which is much easier to handle.
\end{rem}
We devote the remainder of this section to proving this proposition.
To prove the proposition, we need the following analyticity lemma.
\begin{lem}
\label{analytic-lemma}
The following functions are analytic as a function of $\nu \in S^{m-1}$:

\begin{itemize} 
\item $\nu \to | \det (A_\nu)|$

\item $\nu \to A(0, \nu, \hatq)=\sum_{j=1}^{2n} |\mu_j^\nu||\hatq_j^\nu|^2$

\item $\nu \to 
\det(\bar U(\nu)_{K,P}) \, d \bar Z(\hatq, \nu)^P =
\sum_{J \in \I_n}  \det(\bar U(\nu)_{K,P}) \det [U(\nu)_{P,J}]^T \, d \bar z^J$

\end{itemize}

\end{lem}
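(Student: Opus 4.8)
The plan is to prove each of the three analyticity statements in turn, in all three cases reducing to the fact that $\nu \mapsto A_\nu$ is a real-analytic (indeed linear) matrix-valued function together with a contour-integral representation that avoids referring to the individual eigenvalues $\mu_j^\nu$, which need not be smooth in $\nu$. This is exactly the mechanism already used in the proof of Lemma \ref{lem:real analyticity of B}, so the work is to adapt that idea to the three new functionals.

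For the first bullet, $\nu \mapsto |\det A_\nu|$, I would observe that $\det A_\nu = \prod_j \mu_j^\nu$ is a polynomial in $\nu$ (being the constant term, up to sign, of the characteristic polynomial \eqref{symmetric:eq}), hence real analytic. Since the eigenvalues are bounded away from zero uniformly in $\nu \in S^{m-1}$ and $n$ is even with exactly $n$ positive and $n$ negative eigenvalues (Proposition in Section \ref{sec:notation}), the sign of $\det A_\nu = (-1)^n \prod_j |\mu_j^\nu|$ is constant, namely $(+1)^{} $ since $n$ is the number of negative eigenvalues and $n$ is... in any case the sign is a fixed constant on the connected set $S^{m-1}$, so $|\det A_\nu| = \pm \det A_\nu$ is real analytic.

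For the second bullet I would use \eqref{eqn:key} (equivalently \eqref{eqn:Arnuq} at $r=0$, where $\Lambda(u) = |u|$): namely $A(0,\nu,\hatq) = \sum_j |\mu_j^\nu|\,|\hatq_j^\nu|^2 = \hatq^* \sqrt{A_\nu^2}\, \hatq$, and then invoke Lemma \ref{lem:square root}, which asserts precisely that $\nu \mapsto \sqrt{A_\nu^2}$ is analytic on $S^{m-1}$; composing with the (constant, quadratic) map $X \mapsto \hatq^* X \hatq$ gives analyticity in $\nu$. For the third bullet I would argue as in Lemma \ref{lem:real analyticity of B_L}: writing $z = U(\nu) Z(\nu,z)$ and using the minor-determinant identity \eqref{dwK}, the quantity $\sum_{J\in\I_n} \det(\bar U(\nu)_{K,P}) \det([U(\nu)_{P,J}]^T)\, d\bar z^J$ is the $K$-component of $d(\overline{P_\nu z})^{\wedge}$ where $P_\nu = U(\nu)_{\cdot,P} (U(\nu)_{\cdot,P})^*$ is the orthogonal spectral projection of $A_\nu$ onto its positive eigenspace; and $P_\nu$ admits the Riesz projection representation $P_\nu = \frac{1}{2\pi i}\oint_\gamma (zI - A_\nu)^{-1}\, dz$ with $\gamma$ a fixed contour enclosing the (uniformly separated) positive eigenvalues, which is manifestly real analytic in $\nu$ since $z \mapsto (zI - A_\nu)^{-1}$ is and $\gamma$ avoids the spectrum. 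Taking the appropriate $q \times q$ minor of $\bar P_\nu$ then yields the coefficient of $d\bar z^J$, completing the proof.

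The main obstacle, such as it is, is the third bullet: one must correctly identify the combination $\det(\bar U(\nu)_{K,P}) \det([U(\nu)_{P,J}]^T)$ summed over $J$ as a minor of the spectral projection $\bar P_\nu$ (not of $U(\nu)$ itself, which is not analytic in $\nu$), so that the Cauchy/Riesz integral trick applies; the bookkeeping with the Cauchy--Binet formula for minors of a product $U(\nu)_{\cdot,P}(U(\nu)_{\cdot,P})^*$ is the only place where care is needed, but it is routine multilinear algebra and parallels the computation already carried out in the proof of Lemma \ref{lem:real analyticity of B_L}.
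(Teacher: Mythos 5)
Your proposal is correct, and for the first two bullets it coincides with the paper's proof: the sign of $\det A_\nu$ is constant because there are exactly $n$ positive and $n$ negative eigenvalues (so $|\det A_\nu|=(-1)^n\det A_\nu$ is a polynomial in $\nu$), and $A(0,\nu,\hatq)=\hatq^*\sqrt{A_\nu^2}\,\hatq$ is analytic by Lemma \ref{lem:square root}. For the third bullet you make the same key structural identification as the paper --- the coefficient of $d\bar z^J$ is the $(K,J)$ minor of the conjugate of the spectral projection onto the positive eigenspace, with Cauchy--Binet collapsing the sum to the single term $L=L'=P$ --- but you establish the analyticity of that projection differently: you use the Riesz resolvent formula $P_\nu=\frac{1}{2\pi i}\oint_\gamma (zI-A_\nu)^{-1}\,dz$ over a fixed contour enclosing the uniformly separated positive spectrum, whereas the paper builds the projection as $\tfrac12(A_0^\nu+I)$ where $A_0^\nu$ is the uniform limit of the analytic matrices $\frac1\pi\bigl(\arctan(jA_\nu)+\frac\pi2 I_{2n}\bigr)$. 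Your route is arguably cleaner: the contour integral is manifestly analytic in $\nu$ (the integrand is, and $\gamma$ avoids the spectrum uniformly, exactly as in the paper's own proof of Lemma \ref{lem:real analyticity of B}), while the paper's limiting construction implicitly requires knowing that the uniform limit of real-analytic matrix functions is analytic, which deserves a word of justification (e.g.\ via complexification on a fixed neighborhood). Both arguments use the same hypotheses --- eigenvalues bounded away from zero uniformly on the compact, connected sphere --- so nothing is lost or gained in generality; the only caveat in your write-up is the minor-index bookkeeping ($[U(\nu)^T]_{P,J}$ versus $[U(\nu)_{P,J}]^T$), which is the same routine multilinear algebra already carried out in Lemma \ref{lem:real analyticity of B_L} and in the paper's proof of Lemma \ref{analytic-lemma}.
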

\begin{proof}For the first bullet, note that $A_\nu$ has $n$  positive and $n$ negative 
eigenvalues and so $|\det A_\nu| = (-1)^n \det A_\nu$. So the expression in the first bullet is analytic since $A_\nu $ is linear in $\nu$. 

For the second bullet, note that 
$A_\nu$ and $|A_\nu| = \sqrt{A_\nu^2}$ have the same eigenvectors. Therefore
\begin{align}
\label{Anu formula}
\sum_{j=1}^{2n} |\mu_j^\nu||\hatq_j^\nu|^2 &= \sum_{j=1}^{2n} |\mu_j^\nu||Q_j(\nu,\hatq)|^2
= \hatq^* U(\nu) \cdot |D_\nu|\cdot  U(\nu)^*  \hatq = \hatq^* \sqrt{A_\nu^2} \, \hatq
\end{align}
which is analytic in $\nu$. Here, $D_\nu $ is the diagonal matrix with the eigenvalues of $A_\nu$ as its diagonal entriesl, and $\sqrt{ \  } $ is the principal branch of the square root of 
a positive definite Hermitian symmetric matrix.
\m

Showing the expression in the third bullet is analytic in $\nu$ is equivalent to showing that 
the expression
\begin{equation}
\label{eqn:U term for analyticity}
\det(\bar U(\nu)_{K,P}) \det[U(\nu)_{P,J}]^T
\end{equation} 
is real analytic in $\nu \in S^{m-1}$  for each $J,K $ in $\I_n$. 

We shall need  the standard branch of the function
$\arctan z$, which is holomorphic on $\C \setminus\{z = iy : x=0 \text{ and } |y|\geq 1\}$.
Let $I_{2n}$ be the $2n\times2n$ identity matrix and consider the sequence
\[
\frac 1\pi\Big( \arctan(nA^\nu)+\frac\pi2 I_{2n}\Big)
\]
for $j=1, 2, \dots $. Since the eigenvalues of $A^\nu$ are bounded away from zero,  each of these matrices in this sequence is analytic in $\nu \in S^{m-1}$ and
is diagonalized by $U(\nu)$ and $U(\nu)^*$. Furthermore, this sequence converges uniformly in $\nu$ as $j \to \infty$ to 
a matrix $A_0^\nu$, which is analytic in $\nu$ with $n$ eigenvalues equal to 1 and $n$ eigenvalues 
equal to $-1$.  Also $A_0^\nu$ is diagonalized by $U(\nu)$ and $[U(\nu)]^*$.

Now consider 
\[
\tilde A_0^\nu = \frac{1}{2} (A_0^\nu +I).
\]
$\tilde A_0^\nu$ is analytic in $\nu$ and has $n$ eigenvalues equal to 1 and $n$ eigenvalues equal to zero. It is also diagonalized by  $U(\nu)$ and $[U(\nu)]^*$. Therefore,

\[
[U(\nu)]^* \tilde A_0^\nu U(\nu) = D^0, \ \ \textrm{where} 
\]
\[D^0= \left( \begin{array}{cc}
I_n & 0_n \\
0_n & 0_n
 \end{array} \right)\] 
and where $I_n$ is the $n \times n$ identity matrix and $0_n$ is the $n \times n$ zero matrix. Therefore
\[
\overline{\tilde A_0^\nu} =\bar U(\nu) D^0 [U(\nu)]^T .
\]
Taking determinants, we have
\[
\det \overline{[\tilde A_0^\nu}]_{KJ} =\sum_{L, L'} \det (\bar U(\nu)_{K,L})
\det ( D^0_{L,L'} ) \det( [U(\nu)_{L',J}]^T) .
\]
Given that $D_0$ is diagonal, the only nonzero contributions to this sum occur when $L=L'=P$, which is
the set of positive indices $= 1, \dots, n$. We obtain
\[
\det \overline{[\tilde A_0^\nu}]_{KJ} = \det (\bar U(\nu)_{K,P})
 \det( [U(\nu)_{P,J}]^T) .
\]
Since the left side is analytic in $\nu$, so is the right side and this establishes the 
analyticity of (\ref{eqn:U term for analyticity}) and thus concludes the proof of the lemma.
\end{proof}

\begin{proof}[Proof of Proposition \ref{mainprop}] 
Note that the $r$-integral  in (\ref{NK2}) can be computed exactly (as $\ln(2)$), so we need only examine the $\nu$-integral.
Clearly, the integral over $|\nu_1| \geq 1/2$ clearly bounded uniformly in $q$ and $t$. So we restrict attention to
the region  $\{ \nu \in S^{m-1}; \ |\nu_1| \leq 1/2\}$. The key is to examine the integral over $\nu_1$-slices
of this region. Without loss of generality, let us assume we are on a region, $V$, of the sphere where $\nu_m =h(\nu_1, \nu')$ 
can be written as an analytic function of the other variables $(\nu_1, \nu')$ with $\nu'=(\nu_2, \dots , \nu_{m-1})$. We may further assume that
the ``cap" $V$  is large enough so that projection of $V$ onto $\nu_m=0$ contains the disk 
$\{(\nu_1,\nu') \in \R^{m-1}: |\nu_1|\leq \frac 12 \text{ and } |\nu'| \leq \frac 12\}$.
We also write $d \nu = g(\nu_1, \nu') d \nu_1 d \nu'$ where $g$ is an analytic function on $V$.
Let 
\[
G(\hatq,t, \nu_1, \nu')
= \det(\bar U_{K,P}^{\tilde \nu_t}) \, d \bar Z(\hatq, \tilde \nu_t)^P
 |\det A_{\tilde \nu_t}|\,  g(\nu_1, \nu')  \ \ \ \ \textrm{where $\tilde \nu_t=M_t^{-1} (\nu_1, \nu', h(\nu_1, \nu'))$}.
\]
From Lemma \ref{analytic-lemma}, $G(\hatq,t, \nu_1, \nu')$ is analytic in $\nu_1, \nu'$ and uniformly bounded in $\nu \in V \subset S^{m-1}$, $\hatq$, and $t$.
We need to show that there are positive constants $c_0$ and $C_0$ so that 
\begin{equation}
\label{goal-prop}
\Big| \int_{|\nu_1| \leq 1/2} \frac{G(\hatq,t, \nu_1, \nu') \, d \nu_1}{(A(0, \tilde \nu_t, \hatq)-i \nu_1)^{2n+m-1}}  \Big| \leq C_0
\quad\textrm{for all } |\nu'| \leq \frac 12 \text{ and }|\hatq| \leq c_0.
\end{equation}

We shall proceed by using Cauchy's Theorem to bump the contour of integration around the 
potential singularity at $\nu_1=0$.
First, let $\delta_0>0 $ be chosen small enough to that $G(\hatq,t, \nu_1, \nu')$ and $A(0,\tilde \nu_t, \hatq) $ analytically continue from $\{\nu_1 \in \R; \ |\nu_1| \leq 1/2 \}$ to a neighborhood of the rectangle $\tilde V_1 = \{ \zeta_1= \nu_1+i \eta_1 \in \C; \ |\nu_1| \leq 1/2 \ \textrm{and $0 \leq \eta_1 \leq \delta_0$} \}$ in the upper half plane and for all
$( \nu_1, \nu',h(\nu_1,\nu')) \in V$. 
Also note from (\ref{Anu formula}) that $A(0, \nu, \hatq) = \hatq^* \sqrt{A_\nu^2} \, \hatq \geq 0$ for $\nu \in V$. In addition, the analytic extension of $A(0, \nu, \hatq) $ to $\tilde V_1$ is the function
\[
A(0, \tilde \nu_t(\zeta_1), \hatq):= A(0, M_t^{-1} (\zeta_1, \nu', h(\zeta_1, \nu_t), \hatq).
\]
Furthermore, its $\zeta_1$ derivative is uniformly bounded by $\tilde C |\hatq|^2$
for $\zeta_1 \in \tilde V_1$ and $\nu \in V$ where $\tilde C>0$ is a uniform constant.
The following estimate now follows:
\[
\textrm{Re} \, A(0, \tilde \nu_t(\zeta_1), \hatq)
 \geq - \tilde C |\hatq|^2 \eta_1  \ \ 
\textrm{for $\zeta_1 = \nu_1 +i \eta_1 \in \tilde V_1$}.
\]
This inequality implies
\[
|A(0, \tilde \nu_t(\zeta_1),  \hatq)-i \zeta_1| \geq (1 -\tilde C |\hatq|^2) \eta_1 \ \ \ 
\textrm{for $\zeta_1 = \nu_1 +i \eta_1 \in \tilde V_1$}.
\]
Let $\gamma_1$ be the upper three sides of the boundary of the rectangle of $\tilde V_1$, i.e.
the union of the three line segments, respectively, from $-1/2$ to $-1/2 +i \delta$; from 
$-1/2 +i \delta$ to $1/2 +i \delta$, and from $1/2 +i \delta$ to $1/2$. The above inequality 
shows that there is a constant $c_0>0$  such that if $|\hatq| < c_0$, then
\begin{align*}
|A(0,\tilde \nu_t(\zeta_1), \hatq)-i \zeta_1| &>0  \ \ \textrm{for $\zeta_1 $ inside $\tilde V_1$ and} \\
|A(0,\tilde \nu_t(\zeta_1), \hatq)-i \zeta_1| & \geq  c_0 \ \ 
\ \ \textrm{for $\zeta_1 \in \gamma_1$ } .
\end{align*}
Now we can use Cauchy's Theorem to deform the path of integration in (\ref{goal-prop}) 
to $\gamma_1$ and the proof of the estimate in (\ref{goal-prop}) easily follows.
This concludes the proof of the proposition.
\end{proof}

\subsection{The cases $0 < r < 1/2$ and $|z|^2 >|t|$}
The estimates of $N_K$ for the interval $0 < r < 1/2$ 
follow the same arguments as given in Section \ref{sec:lower half}. 
The extra term arising from $S_n$ in \eqref{LP2} eliminates the
convergence issues at $r=0$.
Lemma \ref{analytic-lemma} and the earlier analyticity lemmas
show that all the components of the integrands are analytic in $\nu$. Since
$L \not=P$ in the sum in (\ref{LP1}), the numerator of its integrand contains a positive power of $r$. In addition, the term in brackets $[ \ ]$ in (\ref{LP2}) vanishes at $r=0$, so both integrands are integrable in $r$ near $r=0$. Therefore, the same integration by parts argument 
from Section \ref{sec:lower half} applies to reduce the power of the denominator terms
(down to a log-term) to prove the desired estimates. 

The case when $|z|^2 > |t|$ is handled using the techniques of Section \ref{z large}. 
This completes the proof of Theorem \ref{thm:pointwise bounds}.

%
%
\section{Examples}\label{sec:examples}
Here, we record four examples with complex tangent dimension $2n\geq 2$ and higher codimension $m \geq 2$ in cases where the eigenvalues are always nonzero. 
These examples piggy back on the following standard example in the case of $2n=2$ and $m=2$ 
originally computed in \cite{BoRa13q,BoRa20II}:
\m

\begin{example} $2n=2$, $m=2$, and $q=0$. Consider $\Phi(z,z) = ( \phi_1(z,z), \phi_2(z,z) )$
where
\begin{align*}
\phi_1(z,z) &= 2 \Rre  (z_1 \bar z_2) \\
\phi_2 (z,z) &= |z_1|^2 - |z_2|^2.
\end{align*}
The eigenvalues of the $A_\nu$ (the Hessian of $\Phi(z,z) \cdot \nu$) are $+1$ and $-1$
We use formula (\ref{eqn:N_K}) for $N_L$ with $L =\emptyset$ and so $\eps_1=-1$ and $\eps_2=+1$. Since $m=2$, $S^{m-1}$ is just the unit circle parameterized by $\nu = ( \cos \theta, \sin \theta) $, $0 \leq \theta \leq 2 \pi$ and $d \nu =d \theta$. We rotate $\theta $ coordinates so that $\nu \cdot t$ becomes $|t| \sin \theta$.  From (\ref{eqn:N_K}), we obtain
\[
N(z,t) =\frac{4^2}{2 (2 \pi)^4}  \int_0^1 \int_0^{2 \pi}
\frac{r}{(1-r)^2} \frac{2! \, d\theta }{\left[ \left( \frac{1+r}{1-r} \right) |z|^2 - i |t| \sin \theta
\right]^3} \frac{dr}{r}.
\]
We let $s=\frac{r+1}{1-r}$, $ds = \frac{2 \, dr}{(1-r)^2}$ to obtain
\begin{align*}
N(z,t) &=\frac{4^2}{2 (2 \pi)^4 } \int_0^{2 \pi} \int_1^\infty 
\frac{ds \, d \theta}{[s|z|^2-i |t|\sin \theta]^3} \\
&= \frac{4}{(2 \pi)^4} \frac{1}{|z|^2 |t|^2} \int_0^{2 \pi} \frac{d \theta}{[|\hatq|^2-i \sin \theta ]^2} 
\ \ \ \textrm{with $\hatq=z/|t|^{1/2}$} \\
&= \frac{1}{2 \pi^3} \frac{1}{[ |z|^4+ |t|^2]^{3/2}} \approx \frac{1}{(|z|+ |t|^{1/2})^6} \approx \frac{1}{\rho(z,t)^6}
\end{align*}
as indicated by Theorem \ref{thm:pointwise bounds}.

\end{example}

\begin{example} $2n=2$ and $m=3$. Consider $\Phi(z,z) = ( \phi_1(z,z), \phi_2(z,z), \phi_3 (z,z))$
where
\begin{align*}
\phi_1(z,z) &= 2 \Rre  (z_1 \bar z_2) \\
\phi_2 (z,z) &= |z_1|^2 - |z_2|^2 \\
\phi_3 (z,z) &= 2 \Imm   (z_1 \bar z_2).
\end{align*}
Let $\nu=(\nu_1, \nu_2, \nu_3) $ be a unit  vector  in $\R^3$. Then
\[
A_\nu = \left(
\begin{array}{cc}
\nu_2 & \nu_1 -i \nu_3 \\
\nu_1 +i \nu_3 & -\nu_2
\end{array}
\right).
\]
The characteristic equation for the eigenvalues is $\det(A_\nu-\lambda I) = \lambda^2 - |\nu|^2 = \lambda^2 -1$ with eigenvalues $\lambda = +1, \ -1$. 

From (\ref{eqn:N_K}) with $2n=2$ and $m=3$, we obtain
\[
N(z,t) =\frac{4^2 3!}{2 (2 \pi)^4}  \int_0^1 \int_{\nu \in S^2}
\frac{r}{(1-r)^2} \frac{d \nu \, dr }{\left[ \left( \frac{1+r}{1-r} \right) |z|^2 - i |t| \nu_1
\right]^4} \frac{dr}{r} .
\]
We now let $s=\frac{r+1}{r-1}$ as before and let $x=\nu_1$. Using (\ref{surfmeasure}), we write $d \nu = dx \, d \phi$  where $\phi $ is the angular measure of the $S^1$ copy of the equator of $S^2$. Since $\phi $ does not appear in the integrand, its integral provides a factor of $2\pi$. We obtain
\begin{align*}
N(z,t) &= \frac{4^2 3!}{4 (2 \pi)^4} \int_1^\infty \int_{x=-1}^1 \frac{2 \pi \, dx \, ds}{[s|z|^2-i|t|x]^4} \\
&= \frac{2}{\pi^3} \frac{1}{(|z|^4+|t|^2)^2} \approx \frac{1}{(|z|+ |t|^{1/2})^8} \approx \frac{1}{\rho(z,t)^8}
\end{align*}
as indicated by Theorem \ref{thm:pointwise bounds}.

\end{example}

\begin{example} $2n=4$, $m=4$, $q=0$. Consider $\Phi(z,z) = ( \phi_1(z,z), \phi_2(z,z),  \phi_3(z,z), \phi_4(z,z))$
where
\begin{align*}
\phi_1(z,z) &= 2  \textrm{Re} (z_1 \bar z_2) +2 \textrm{Re} (z_3 \bar z_4)\\
\phi_2 (z,z) &= 2 \textrm{Re} (z_2 \bar z_3) -2 \textrm{Re} (z_1 \bar z_4)\\
\phi_3(z,z) &= 2 \textrm{Im} (z_1 \bar z_2) -2 \textrm{Im} (z_3 \bar z_4)\\
\phi_4(z,z) &=- 2 \textrm{Im} (z_2 \bar z_3) +2  \textrm{Im} (z_1 \bar z_4).
\end{align*}
Let $\nu=(\nu_1, \nu_2, \nu_3, \nu_4) $ be a unit vector  in $\R^4$. Then
\begin{equation}\label{eqn:A_nu multiexample}
A_\nu =  \left(
\begin{array}{cccc}
0& \nu_1 -i\nu_3 & 0 & -\nu_2-i \nu_4 \\
 \nu_1 +i \nu_3 &0 &\nu_2 +i \nu_4 & 0 \\
0 & \nu_2-i\nu_4 &0 & \nu_1+i\nu_3 \\
-\nu_2 +i  \nu_4 &0&  \nu_1-i\nu_3 & 0
\end{array} 
\right).
\end{equation}
The characteristic polynomial (in $\lambda$) is the quadratic polynomial
$(\lambda^2-1)^2$ with eigenvalues $+1, +1, -1, -1$.

From (\ref{eqn:N_K}) with $2n=4$ and $m=4$, we obtain
\[
N(z,t) =\frac{4^4 6!}{2(2 \pi)^8 }\int_0^1 \int_{\nu \in S^3} \frac{r^2}{(1-r)^4}
\frac{d \nu}{\left[ \left( \frac{1+r}{1-r} \right) |z|^2 - i |t| \nu_1 \right]^7 } 
\frac{dr}{r}.
\]
We let $\hatq = z/|t|^{1/2}$ and $s= \frac{1+r}{1-r}$ (as before) to obtain
\[
N(z,t) =\frac{4^4 6!}{2^4(2 \pi)^8 |t|^7 }\int_1^\infty \int_{\nu \in S^3}
\frac{(s^2-1)\, d \nu \, ds}{(|\hatq|^2s-i \nu_1)^7}.
\]
Now we let $x=\nu_1$ and use (\ref{surfmeasure}) with $m=4$ to write
$d \nu = \sqrt{1-x^2} \, dx \, d \nu'$ where $d \nu' $ is surface measure on $S^2$ (the equator of $S^3$). When $\nu' $ is integrated out, this provides a factor of $4\pi$. We obtain
\begin{align*}
N(z,t) &=\frac{4^4 6! (4 \pi)}{2^4(2 \pi)^8 |t|^7 }\int_1^\infty \int_{x=-1}^1
\frac{(s^2-1) \sqrt{1-x^2} \, dx \, ds}{(|\hatq|^2s  - ix)^7} \\
&= \frac{15}{2 \pi^6}  \frac{1}{(|z|^4+|t|^2)^{7/2}} \approx \frac{1}{(|z|+|t|^{1/2})^{14}} \approx \frac{1}{\rho(z,t)^{14}}
\end{align*}
as indicated by Theorem \ref{thm:pointwise bounds}.
\end{example}

\begin{example}This is the same example as Example 12.3, except with $q=1$ and $K = \{1\}$. 
The matrix $A_\nu$ from \eqref{eqn:A_nu multiexample}
has associated eigensystem (with entries written as pairs $\{v,\lambda)\}$ where $v$ is a
unit eigenvector with
eigenvalue $\lambda$ is
\[
\left\{ \bigg\{ \frac{1}{\sqrt2}\begin{pmatrix} -1\\ -\nu_1-i\nu_3\\ 0 \\ \nu_2-i\nu_4  \end{pmatrix} , 1\bigg\},
\bigg\{ \frac{1}{\sqrt2} \begin{pmatrix}0 \\ \nu_2+i\nu_4 \\1\\ \nu_1-i\nu_3 \end{pmatrix}, 1\bigg\}
\bigg\{ \frac{1}{\sqrt2} \begin{pmatrix}1 \\ -\nu_1-i\nu_3\\0 \\ \nu_2-i\nu_4 \end{pmatrix}, -1\bigg\}
\bigg\{  \frac{1}{\sqrt2}\begin{pmatrix} 0\\ \nu_2+i\nu_4 \\ -1 \\ \nu_1-i\nu_3\end{pmatrix}, -1\bigg\}
\right\}.
\]
We have 
\[
U(\nu) =
\begin{pmatrix} v^\nu_1 & v^\nu_2 & v^\nu_3 & v^\nu_4 \end{pmatrix}
= \frac 1{\sqrt 2} \begin{pmatrix} -1 & 0 & 1 &0 \\
-\nu_1-i\nu_3 & \nu_2+i\nu_4 & -\nu_1-i\nu_3 & \nu_2+i\nu_4 \\
0 & 1 & 0 & -1\\
\nu_2-i\nu_4 & \nu_1-i\nu_3 & \nu_2-i\nu_4 & \nu_1-i\nu_3
\end{pmatrix}
\]
where
\[
v^\nu_1 = \frac{1}{\sqrt 2}\begin{pmatrix} -1\\ -\nu_1-i\nu_3\\ 0 \\ \nu_2-i\nu_4  \end{pmatrix}, \quad
v^\nu_2 = \frac{1}{\sqrt2} \begin{pmatrix}0 \\ \nu_2+i\nu_4 \\1\\ \nu_1-i\nu_3 \end{pmatrix}, \quad
v^\nu_3 =  \frac{1}{\sqrt 2}\begin{pmatrix}1 \\ -\nu_1-i\nu_3\\0 \\ \nu_2-i\nu_4 \end{pmatrix}, \quad
v^\nu_4 =  \frac{1}{\sqrt2}\begin{pmatrix} 0\\ \nu_2+i\nu_4 \\-1 \\ \nu_1-i\nu_3\end{pmatrix}
\]
so that
\[
U(\nu)^* A_\nu U(\nu) = \begin{pmatrix} I_2 & 0 \\ 0 & - I_2 \end{pmatrix}
\]
where $I_2$ is the $2\times2$ identity matrix. 

Since $|\mu_j^\nu|=1$ for all $j$ and $\nu \in S^{m-1}$, 
\begin{align*}
A_\nu (r,z) =  \sum_{j=1}^{2n} |\mu_j^\nu| \left( \frac{1+ r^{|\mu_j^\nu|}}{1-r^{|\mu_j^\nu|}} \right) |z^\nu_j|^2 
= \sum_{j=1}^4  \left( \frac{1+ r}{1-r} \right) |z^\nu_j|^2 = \frac{1+ r}{1-r} |z|^2.
\end{align*}
Next,
\begin{align*}
\prod_{j=1}^{2n} \frac{r^{(1/2)(1-\eps_{j,L}^\nu) |\mu_j^\nu|} |\mu_j^\nu|}
{(1-r^{|\mu_j^\nu|}) }
&= \frac{1}{(1-r)^4} \begin{cases} r & L \in \{1,2\} \\ r^3  & L \in \{3,4\} \end{cases}
\end{align*}
Next, we compute
\begin{align*}
d \bar Z(\nu,z) &= U(\nu)^T \cdot d \bar z
=  \frac 1{\sqrt 2}\begin{pmatrix}
-1 & -\nu_1-i\nu_3 & 0 &\nu_2-i\nu_4 \\ 
0 & \nu_2+i\nu_4 &1 & \nu_1-i\nu_3 \\
1 & -\nu_1-i\nu_3 &0 & \nu_2-i\nu_4 \\
0 & \nu_2+i\nu_4 & -1 & \nu_1-i\nu_3
\end{pmatrix} 
\begin{pmatrix} d\z_1 \\ d\z_2 \\ d\z_3 \\ d\z_4 \end{pmatrix} .
\end{align*}
Next,  $d \bar Z(\nu,z) = U(\nu)^T \cdot d \bar z$, then multiplying both sides by $\bar U(\nu)$ where 
$U(\nu)^T$ is the transpose of $U$ produces
\[
\bar U(\nu)\cdot d\bar Z(z,\nu) =\bar U(\nu) \cdot U(\nu)^T \cdot d \bar z = \overline{U(\nu) \cdot U(\nu)^*}\cdot d\bar z
= d\bar z.
\]
Also,  $\det(\bar U(\nu)_{K',L}) = \bar U(\nu)_{k',\ell}$.
From \eqref{eqn:N_K} and \eqref{eqn:K nm} and the computations in this example, 
\begin{align*}
& N_1(z,t) \\ &=-\frac{K_{4,4}}{2}  \int_{\nu \in S^3}
\Big[-d\z_1 - (\nu_1+i\nu_3)\, d\z_2  + (\nu_2-i\nu_4)d\z_4 \big)\Big]  
\int_{r=0}^1  \frac{r}{(1-r)^4} \frac{  \, d \nu}{(\frac{1+ r}{1-r} |z|^2- i \nu \cdot t)^7}\frac{dr}{r} \\
&+ \frac{K_{4,4}}{2}\int_{\nu \in S^3}\Big[d\z_1 - (\nu_1+i\nu_3)\,d\z_2  +( \nu_2-i\nu_4)\, d\z_4\Big] \int_{r=0}^1 
\frac{r^3}{(1-r)^4} \frac{  \, d \nu}{(\frac{1+ r}{1-r} |z|^2- i \nu \cdot t)^7}\frac{dr}{r} 
\end{align*}
where $K_{4,4} = \frac{4^4(6!)}{2(2 \pi)^8}$.
Reorganizing, we have
\begin{align}
\label{N1-example}
N_1(z,t) 
&=\bigg[ \frac{K_{4,4}}{2} \int_{\nu \in S^3} \int_{r=0}^1
\frac{1 + r^2}{(1-r)^4}
\frac{1}{(\frac{1+ r}{1-r} |z|^2- i \nu \cdot t)^7}\, dr\, d\nu \bigg]\, d\z_1 \\
&+\bigg[\frac{ K_{4,4}}{2} \int_{\nu \in S^3} \int_{r=0}^1
\frac{(\nu_1+i\nu_3) (1-r^2)}{(1-r)^4}
\frac{1}{(\frac{1+ r}{1-r} |z|^2- i \nu \cdot t)^7}\, dr\, d\nu \bigg]\, d\z_2 \nn \\
&-\bigg[ \frac{K_{4,4}}{2} \int_{\nu \in S^3} \int_{r=0}^1
\frac{(\nu_2-i\nu_4)(1-r^2)}{(1-r)^4}
\frac{1}{(\frac{1+ r}{1-r} |z|^2- i \nu \cdot t)^7}\, dr\, d\nu\bigg]\, d\z_4. \nn
\end{align}
We observe that with $\hatq = z/|t|^{1/2}$,
\[
\int_0^1 \frac{1+r^2}{(1-r)^4(\frac{1+r}{1-r} |\hatq|^2 + ia)^7}\, dr = \frac{-a^2 + 6 i a |\hatq|^2 + 25|\hatq|^4}{240|\hatq|^6(ia+|\hatq|^2)^6}
\]
and
\[
\int_0^1 \frac{1-r^2}{(1-r)^4(\frac{1+r}{1-r} |\hatq|^2 + ia)^7}\, dr = \frac{i a +6|\hatq|^2}{60|\hatq|^4(ia+|\hatq|^2)^6}.
\]

Let's also observe the estimate in the special case that $t = (|t|,0,\dots,0)$ and only the $d\z_1$ component
(since $K = \{1\}$) and compute
\begin{align*}
I &=   \int_{\nu \in S^3} \int_{r=0}^1
\frac{1+ r^2}{(1-r)^4}
\frac{1}{(\frac{1+ r}{1-r} |z|^2- i \nu \cdot t)^7}\, dr\, d\nu \\
&= \frac{1}{|t|^7} \int_{\nu \in S^3} \int_{r=0}^1
\frac{1+ r^2}{(1-r)^4}
\frac{1}{(\frac{1+ r}{1-r} |\hatq|^2- i \nu_1)^7}\, dr\, d\nu \\
&= \frac{1}{240|\hatq|^6|t|^7}\int_{\nu \in S^3}\frac{-\nu_1^2 - 6 i \nu_1 |\hatq|^2 + 25|\hatq|^4}{(|\hatq|^2-i\nu_1)^6}  \, d\nu.
\end{align*}
Integrating in spherical coordinates, we compute
\begin{align*}
I &= \frac{1}{240|t|^4|z|^6}\int_0^\pi \int_0^\pi \int_0^{2\pi}
\frac{-\cos^2\alpha_1 - 6 i \cos\alpha_1 |\hatq|^2 + 25|\hatq|^4}{(|\hatq|^2-i\cos\alpha_1)^6} 
 \sin^2 \alpha_1 \sin \alpha_2 \, d\alpha_3\, d\alpha_2\, d\alpha_1 \\
&= \frac{\pi}{120|t|^4|z|^6}\int_{-\pi}^{\pi}
\frac{-\cos^2\alpha_1 - 6 i \cos\alpha_1 |\hatq|^2 + 25|\hatq|^4}{(|\hatq|^2-i\cos\alpha_1)^6} 
\sin^2\alpha_1\, d\alpha_1.
\end{align*}
The last integral follows from the fact that $\cos\alpha_1$ and $\sin^2\alpha_1$ are even functions. If
$f(\cos\alpha_1,\sin\alpha_1)$ is the integrand, then
\begin{align*}
\int_{-\pi}^{\pi} f(\cos\alpha_1,\sin\alpha_1)\, d\alpha_1
= \oint_{|z|=1} f\Big(\frac{z+\frac 1z}2,\frac{z-\frac 1z}{2i}\Big) \frac{1}{iz}\, dz.
\end{align*}
If 
\[
g(z) =  f\Big(\frac{z+\frac 1z}2,\frac{z-\frac 1z}{2i}\Big) \frac{1}{iz}
\]
then 
\[
g(z) = -\frac{4iz(-1+z^2)^2(-100 |\hatq|^4 z^2 + (1 + z^2)^2 + 12 i |\hatq|^2 (z + z^3))}{(2 |\hatq|^2 z - i (1 + z^2))^6}
\]
has poles at $z =  i(-|\hatq|^2 \pm \sqrt{|\hatq|^4+1})$. The pole at $z=i(-|\hatq|^2+\sqrt{|\hatq|^4+1})$ occurs inside the unit disk and
is easily computed using Mathematica. In fact,
\[
\Res(g,i(-|\hatq|^2+\sqrt{|\hatq|^4+1})) =-\frac{5|\hatq|^6 i (-2 + 5 |\hatq|^4)}{2 (1 + |\hatq|^4)^{\frac92}}.
\]
In summary, the $d \bar z_1$ component of $N_1$ in (\ref{N1-example}) is
\[
I =  -2\pi i \frac{\pi}{120|t|^7}\frac{5 i (-2 + 5 |\hatq|^4)}{2 (1 + |\hatq|^4)^{\frac92}}.
\]

Similar calculations with $t = (|t|,0,0,0)$ shows that the $d \bar z_2$ component of $N_1$ in (\ref{N1-example}) is
\begin{align*}
II &= \int_{\nu \in S^3} \int_{r=0}^1 \frac{(\nu_1+i\nu_3) (1-r^2)}{(1-r)^4}
\frac{1}{(\frac{1+ r}{1-r} |z|^2- i \nu \cdot t)^7}\, dr\, d\nu
= \frac{2\pi^2 i}{15 |t|^7}\frac{35 |\hatq|^2}{8(1+|\hatq|^4)^{\frac 92}}.
\end{align*}
and the $d \bar z_4$ component of $N_1$ in (\ref{N1-example}) is
\begin{align*}
III &= \int_{\nu \in S^3} \int_{r=0}^1 \frac{(\nu_2-i\nu_4) (1-r^2)}{(1-r)^4}
\frac{1}{(\frac{1+ r}{1-r} |z|^2- i \nu \cdot t)^7}\, dr\, d\nu =0.
\end{align*}
In summary, we have computed that 
\[
N_1\big(z,(|t|,0,0,0)\big)
= \frac{15}{\pi^6}\frac{1}{(|t|^2+|z|^4)^{\frac 72}}\bigg[\frac{-2|t|^2 + 5|z|^4}{2(|t|^2+|z|^4)} \, d\z_1
+ i\frac{7|z|^2|t|}{(|t|^2+|z|^4)}\, d\z_2\bigg].
\]
\end{example}
This expression has norm $\approx \rho(z,t)^{-14} $ as indicated by Theorem \ref{thm:pointwise bounds}.

\begin{example} This is a modification of Example 12.3 where the eigenvalues of $A_\nu$ do not depend analytically on $\nu$.
Let $\Phi(z,z) = ( \phi_1(z,z), \phi_2(z,z),  \phi_3(z,z), \phi_4(z,z))$
where
\begin{align*}
\phi_1(z,z) &= 2  \textrm{Re} (z_1 \bar z_2) +2 \textrm{Re} (z_3 \bar z_4)\\
\phi_2 (z,z) &= 2 \textrm{Re} (z_2 \bar z_3) -2 \textrm{Re} (z_1 \bar z_4)\\
\phi_3(z,z) &= 2 \textrm{Im} (z_1 \bar z_2) -2 \textrm{Im} (z_3 \bar z_4)\\
\phi_4(z,z) &=- 2 \textrm{Im} (z_2 \bar z_3) +2 (1+b) \textrm{Im} (z_1 \bar z_4)
\end{align*}
where  $b$ is a small real number.
Let $\nu=(\nu_1, \nu_2, \nu_3, \nu_4) $ be a unit vector  in $\R^4$. Then, it is easy to compute the complex Hessian of $\Phi_\nu = \Phi(z,z) \cdot \nu$:
\[
A_\nu=\textrm{Hessian} \Phi_\nu = \left(
\begin{array}{cccc}
0& \nu_1 -i\nu_3 & 0 & -\nu_2-i (1+b)\nu_4 \\
 \nu_1 +i \nu_3 &0 &\nu_2 +i \nu_4 & 0 \\
0 & \nu_2-i\nu_4 &0 & \nu_1+i\nu_3 \\
-\nu_2 +i (1+b) \nu_4 &0&  \nu_1-i\nu_3 & 0
\end{array} 
\right).
\]
Note that when  $b=0$, then this is Example 3.

The characteristic polynomial (in $\lambda$) turns out to be a quadratic polynomial in $\lambda^2$ so the eigenvalues (though messy) can be computed as $\mu_1^\nu>0,\  \mu_2^\nu>0, \ 
\mu_3^\nu<0, \ \mu_4^\nu<0$ where:
\[
\mu_1^\nu = \sqrt{\Lambda_+} , \ \  \mu_2^\nu = \sqrt{\Lambda_-} , \ \ 
\mu_3^\nu =- \sqrt{\Lambda_+} , \ \ \mu_4^\nu = -\sqrt{\Lambda_-} 
\]
and where
\begin{align*}
\Lambda_{\pm} &=  \nu_1^2 +\nu_2^2 +\nu_3^2 +(1/2)(b^2+2b+2) \nu_4^2 \\
&\ \ \pm |\nu_4| |b| \left( \frac{(b+2)^2 \nu_4^2}{4} +  \nu_1^2 +\nu_3^2 \right)^{1/2}.
\end{align*}

Note that when $b=0$, $\mu_1^\nu =  \mu_2^\nu  =1$ and $\mu_3^\nu =\mu_4^\nu =-1$ 
as in Example 12.3. For  $ b$ nonzero, but small, these eigenvalues are not smooth at $\nu_4=0$ and $\nu_1, \nu_3 \not=0$ 
due to the presence of $|\nu_4|$.
\end{example}

\bibliographystyle{alpha}
\bibliography{mybib}
\end{document}